\crefname{ass}{Assumption}{Assumptions}
\newcommand{\grad}{\nabla}
\newcommand{\norm}[2]{\left\lVert #1\right\rVert_{#2}}
\DeclarePairedDelimiterX\dual[2]{\langle}{\rangle}{#1,#2}
\DeclarePairedDelimiter\parens()
\DeclarePairedDelimiter\abs\lvert\rvert
\DeclarePairedDelimiter\set\{\}
\newcommand{\weaklyto}{\rightharpoonup}
\newcommand{\WOT}{\overset{\mathrm{WOT}}{\to}}
\newcommand{\gammato}{\overset{\gamma}\to}
\newcommand{\M}{\mathcal{M}}
\newcommand{\K}{\mathcal{K}}
\newcommand\kzero{k}
\newcommand\kone{K}
\newcommand\jj{j}
\newcommand{\Qf}{L}
\newcommand\N{\mathbb{N}}
\newcommand\R{\mathbb{R}}
\renewcommand\d{\mathop{}\!\mathrm{d}}
\DeclareMathOperator*{\capa}{cap}
\newtheorem{theorem}{Theorem}[section]
\newtheorem{prop}[theorem]{Proposition}
\newtheorem{lem}[theorem]{Lemma}
\newtheorem{eg}[theorem]{Example}
\newtheorem{remark}[theorem]{Remark}
\newtheorem{ass}[theorem]{Assumption}
\theoremstyle{definition}
\newtheorem{defn}[theorem]{Definition}
\title{Subdifferentials and penalty approximations of the obstacle problem}
\author{Amal Alphonse\thanks{Weierstrass Institute, Mohrenstrasse 39, 10117 Berlin, Germany (\href{mailto:alphonse@wias-berlin.de}{alphonse@wias-berlin.de})} \and Gerd Wachsmuth\thanks{Institute of Mathematics, Brandenburgische Technische Universität Cottbus-Senftenberg, 03046 Cottbus, Germany (\href{mailto:gerd.wachsmuth@b-tu.de}{gerd.wachsmuth@b-tu.de}), ORCID: \href{https://orcid.org/0000-0002-3098-1503}{0000-0002-3098-1503}\protect\\
{\textbf{Acknowledgements:} the authors are grateful to the referees for their useful comments.}}}
\begin{document}
\maketitle
\begin{abstract}
	We consider a framework for approximating the obstacle problem through a penalty approach by nonlinear PDEs.
	By using tools from capacity theory, we show that derivatives of the solution maps of the penalised problems converge in the weak operator topology to an element of the strong-weak Bouligand subdifferential. We are able to treat smooth penalty terms as well as nonsmooth ones involving for example the positive part function $\max(0,\cdot)$. Our abstract framework applies to several specific choices of penalty functions which are omnipresent in the literature. We conclude with consequences to the theory of optimal control of the obstacle problem.
\end{abstract}

 \section{Introduction}
A ubiquitous method to approximate solutions of the classical obstacle problem
\begin{equation}
u \in H^1_0(\Omega), \; u \leq \psi : \langle -\Delta u-f, u-v \rangle \leq 0 \qquad \forall v \in H^1_0(\Omega), \; v \leq \psi \label{eq:VI}
\end{equation}
is by penalisation through a nonlinear PDE
\begin{equation}
-\Delta u_\rho + \Lambda_\rho(u_\rho-\psi) = f_\rho\label{eq:pde}
\end{equation}
{where $\Lambda_\rho$ is a suitable (nonlinear) operator.} The equation \eqref{eq:pde} approximates the variational inequality \eqref{eq:VI}  in the sense that its solutions satisfy $u_\rho \to u$ as $\rho \to 0$ provided $f_\rho \to f$. If we define the source-to-solution map $f_\rho \mapsto u_\rho$ of \eqref{eq:pde} by
\[S_\rho \colon H^{-1}(\Omega) \to H_0^1(\Omega)\]
and consider its derivative at $f_\rho$ in a direction $d$ denoted by $\alpha_\rho := S_\rho'(f_\rho)(d)$, then we know that it satisfies the linearised equation 
\begin{equation}
-\Delta\alpha_\rho + \Lambda_\rho'(u_\rho-\psi)(\alpha_\rho) = d.\label{eq:pdeForDerivativeStart}
\end{equation}
A natural question arises concerning the convergence of the derivatives
$\alpha_\rho = S_\rho'(f_\rho)(d)$ in the limit $\rho \to 0$ {(the derivatives are indeed uniformly bounded as we shall see later in \cref{lem:boundOnDerivative})}, or more generally,
the convergence of the operators $S_\rho'(f_\rho)\colon H^{-1}(\Omega) \to H^1_0(\Omega)$.
The possibility that immediately comes to mind is that they converge to the directional derivative of the VI solution map $S \colon f \mapsto u$ in \eqref{eq:VI}, i.e., to $S'(f)$. If this were to be the case, the limit $\alpha(d)$ would satisfy 
\[\alpha(d) \in \K : \langle -\Delta\alpha(d)-d, \alpha(d)-v \rangle \leq 0 \quad \forall v \in \K
\]
where $\K := \{ v \in H^1_0(\Omega) : \text{$v\leq 0$ q.e. on $\{u=\psi\}$, $\langle -\Delta u-f, v \rangle =0$}\}$ is the critical cone at $u=S(f)$ {and the notation q.e. means quasi-everywhere, see \cref{sec:orthogonality_lap_alpha} for more details}. 
In general,
this cannot happen
since $d \mapsto \alpha(d)$ is nonlinear (unless $\K$ is a subspace),
whereas $d \mapsto \alpha_\rho(d)$ is often linear for all $\rho$ (e.g.,
if $\Lambda_\rho$ is Fréchet differentiable) and linearity would be preserved in the limit. 
{We will illustrate this with a concrete example below in \cref{eg:first}.
}

In this paper, we consider limits of the operator $S_\rho'(f_\rho) \colon d \mapsto \alpha_\rho$
for different choices of the penalty function $\Lambda_\rho$ and prove that (under weak conditions) they converge in the weak operator topology to elements of the strong-weak subdifferential of $S$, defined as
\begin{align*}
	\partial_B^{sw} S(f) := \set*{
		\begin{aligned}
			L \in \mathcal{L}(H^{-1}(\Omega), H^1_0(\Omega)) : \;
			& \exists \{f_n\} \subset F_S : f_n \to f \in H^{-1}(\Omega),
			\\
			& S'(f_n) \WOT L \text{ in $\mathcal{L}(H^{-1}(\Omega), H^1_0(\Omega))$}
		\end{aligned}
	},
\end{align*}
where $F_S$ is the set of all points in $H^{-1}(\Omega)$ at which $S$ is G\^ateaux differentiable and $\WOT$ refers to convergence in the weak operator topology, which we recall now.
\begin{defn}\label{defn:WOT}
	A sequence $\{L_n\} \subset \mathcal{L}(X,Y)$ of bounded linear operators between Banach spaces $X$ and $Y$ converges to a bounded linear operator $L \in \mathcal{L}(X,Y)$ in the weak operator topology if and only if $L_nx \weaklyto Lx$ in $Y$ for all $x \in  X$. We write this as $L_n \WOT L$.
\end{defn}
{Before we proceed, let us give an example demonstrating an element of this subdifferential and verify that it differs from the directional derivative.
\begin{eg}\label{eg:first}
Consider the setting $\psi\equiv 0$. Clearly $S(0)=0$, and using the positive homogeneity of $S$, the perturbation $S(td)=tS(d)$ for all $d \in H^1_0(\Omega)$ and $t>0$. Hence the directional derivative $S'(0)(d)$ equals $S(d)$, i.e., 
\[S'(0) = S\]
is itself the solution map of the obstacle problem. On the other hand, consider the penalised equation \eqref{eq:pde} with $\Lambda_\rho$ chosen as a $C^1$ regulariser satisfying $\Lambda_\rho(0)=\Lambda_\rho'(0)=0$ (the first choice in \eqref{eq:examples_smooth} satisfies this, see \cref{sec:abstract}). We find that $S_\rho(0) = 0$ because $\Lambda_\rho(0)=0$, and, recalling \eqref{eq:pdeForDerivativeStart}, the derivative $\alpha_\rho(d) :=S'(0)(d)$ satisfies the equation $-\Delta \alpha_\rho(d) = d$ since $\Lambda_\rho'(0)=0$. That is, $S_\rho'(0)=(-\Delta)^{-1}$ is the inverse Laplacian for all $\rho>0$. Thus the limiting object remains the same:
\[\lim_{\rho \to 0} S_\rho'(0) = (-\Delta)^{-1}.\]
This of course does not agree with $S'(0)=S$.
Our main result \cref{thm:main_result} will show
that $(-\Delta)^{-1} \in \partial_B^{sw}S(0)$.
\end{eg}}
Our method of proof relies on a recent characterisation of $\partial^{sw}_B S(f)$ from \cite{RW} (see \eqref{eq:RW_characterisation} below) involving so-called capacitary measures,
which we will introduce in \cref{sec:char_limits}.
In order to fulfil one of the conditions to utilise that characterisation (see also \cref{rem:cts_solution}), we assume the following.
\begin{ass}[Standing assumption on regularity]\label{ass:for_Bsw}
	Let $\Omega \subset \R^n$ be a bounded open set in dimension $n \ge 2$.
	For the obstacle, we assume
	$\psi \in C(\bar\Omega) \cap H^1(\Omega)$ with either $\psi \in H^1_0(\Omega)$ or $\psi > 0$ on $\partial\Omega$.

\end{ass}
For convenience,
	we define the positive part $(\cdot)^+$ and the negative part $(\cdot)^-$ via
	\begin{equation*}
		u^+ := \max(0, u)
		\qquad\text{and}\qquad
		u^- := \max(0,-u)
	\end{equation*}
	respectively.
Inspired by tradition and existing literature, we treat in particular the following specific examples\footnote{We have chosen the superscripts $\mathrm{m}$ and $\mathrm{c}$ in \eqref{eq:examples_ns} to stand for \textbf{m}ax and \textbf{c}omplementarity respectively; the reason for the former is clear and the latter is due to the fact that $\Lambda^{\mathrm{c}}_\rho$ is obtained from writing the VI as a complementarity system. The superscripts $\mathrm{sm}$ and $\mathrm{sc}$ in \eqref{eq:examples_smooth} are supposed to denote that these are \textbf{s}moothed versions of $\mathrm{m}$ and $\mathrm{c}$ respectively.}:
\begin{align}
 \Lambda_\rho^\mathrm{m}(u) = \frac 1\rho u^+\qquad &\text{and}\qquad\Lambda_\rho^\mathrm{c}(u) =  \frac{1}{\rho}(\rho\bar\lambda + u)^+ \label{eq:examples_ns}
\end{align}
where $\bar\lambda \in L^\infty(\Omega)$, $\bar\lambda \geq 0$ is given, 
and the corresponding smooth versions
\begin{align}
 \Lambda_\rho^\mathrm{sm}(u) = \frac 1\rho m_\rho(u) \qquad &\text{and}\qquad  \Lambda_\rho^\mathrm{sc}(u) = \frac{1}{\rho}m_\rho (\rho\bar\lambda +  u)\label{eq:examples_smooth}
 \end{align}
where $m_\rho$ is a regularisation (satisfying \cref{ass:0}) that smooths out the positive part function $(\cdot)^+$, see \cref{lem:regularisations_satisfy_ass} for some concrete examples. In particular, we have in mind the commonly used regularisations from \cite{MR2822818} and \cite{KunischWachsmuthPathFollowing} respectively, see \eqref{eq:eg_mrho_new} and \eqref{eq:eg_mgamma_new} for their definitions. We will denote solution maps as well as other maps that depend on the specific choice of $\Lambda$ with the superscript $\mathrm{m}, \mathrm{c}, \mathrm{sm}$ or $\mathrm{sc}$ as appropriate. Note that the choice $\bar\lambda \equiv 0$ yields $\Lambda_\rho^\mathrm{m} = \Lambda_\rho^\mathrm{c}$ and $\Lambda_\rho^\mathrm{sm} = \Lambda_\rho^\mathrm{sc}$. A typical choice  of $\bar\lambda$
is $\bar\lambda := (f+\Delta\psi)^+$, see \cite[Theorem 3.2]{IKOCVIs}.

Our penalty term $\Lambda_\rho^{\mathrm{sc}}$ covers also the penalisation
\begin{equation}
\Lambda_\rho^{\widetilde{\mathrm{sc}}}(u) = \tilde m_\rho(\bar\lambda + (1\slash \rho) u),\label{eq:tildeLambda_rho}
\end{equation}
(given a regularisation $\tilde m_\rho$) which is used frequently in the literature, see, e.g., \cite{KunischWachsmuth, KunischWachsmuthPathFollowing, SchielaWachsmuth, IK, IKOCVIs}.
Indeed,
$\Lambda_\rho^{\widetilde{\mathrm{sc}}}$ is of the form \eqref{eq:examples_smooth} using
\begin{equation}
	\label{eq:machet_die_tilde_weg}
	m_\rho(r) := \rho \tilde m_\rho( r / \rho ),
\end{equation}
see \cref{rem:tilde_penalisation} for more details.

Let us give the main results of this work. 
We start with the smooth case \eqref{eq:examples_smooth}. 
Below, we use the notation $C_0(\Omega)$ for the set of functions $v \in C(\bar\Omega)$ with $v = 0$ on $\partial\Omega$.

\begin{theorem}\label{thm:main_result_intro}
Let \cref{ass:for_Bsw} hold. For every $f \in H^{-1}(\Omega)$ with $S(f) \in C_0(\Omega)$, if  $f_\rho \to f$ in $H^{-1}(\Omega)$, then there exist maps
$\Qf^\mathrm{sm}, \Qf^\mathrm{sc} \in \partial_B^{sw}S(f)$ 
such that for a subsequence (that we relabel),
\begin{equation*}
(S_\rho^{\mathrm{sm}})'(f_\rho) \WOT \Qf^\mathrm{sm},
\qquad\text{and}\qquad
(S_\rho^{\mathrm{sc}})'(f_\rho) \WOT \Qf^\mathrm{sc}. 
\end{equation*}
\end{theorem}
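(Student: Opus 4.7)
The strategy is to combine the capacitary characterisation of $\partial_B^{sw}S(f)$ from \cite{RW} with the Dal Maso--Mosco compactness theory of $\gamma$-convergence. The starting observation is that, in both the $\mathrm{sm}$ and $\mathrm{sc}$ cases, the linearised penalised equation $-\Delta\alpha_\rho + \Lambda_\rho'(u_\rho-\psi)\alpha_\rho = d$ has the structure of a Schr\"odinger equation with a nonnegative pointwise potential
\[
c_\rho^{\mathrm{sm}} = \tfrac{1}{\rho}\, m_\rho'(u_\rho^{\mathrm{sm}}-\psi),\qquad c_\rho^{\mathrm{sc}} = \tfrac{1}{\rho}\, m_\rho'(\rho\bar\lambda + u_\rho^{\mathrm{sc}}-\psi).
\]
Since the two cases are structurally identical up to the $O(\rho)$ shift $\rho\bar\lambda$ in the argument of $m_\rho'$, I would treat them in parallel throughout.

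The first step is to apply Dal Maso's compactness theorem to the associated capacitary measures $\mu_\rho := c_\rho\, \dx$, extracting a subsequence such that $\mu_\rho \gammato \mu$ for some $\mu \in \M_0(\Omega)$. By the defining property of $\gamma$-convergence, for every fixed $d \in H^{-1}(\Omega)$ the solutions $\alpha_\rho(d) = S_\rho'(f_\rho)(d)$ then converge weakly in $H^1_0(\Omega)$ to the unique solution $\alpha(d)$ of the relaxed Dirichlet problem $-\Delta\alpha(d) + \mu\,\alpha(d) = d$. Setting $Ld := \alpha(d)$ defines a bounded linear operator $L \in \mathcal{L}(H^{-1}(\Omega),H^1_0(\Omega))$ and gives $S_\rho'(f_\rho) \WOT L$ by definition.

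The second and more delicate step is to verify that $L \in \partial_B^{sw}S(f)$ by checking the capacitary admissibility conditions appearing in the characterisation \eqref{eq:RW_characterisation} from \cite{RW}. Concretely, one must show that $\mu$ is concentrated (in the quasi-everywhere sense) on the coincidence set $\{u=\psi\}$ of $u = S(f)$ and is compatible with the VI multiplier $\lambda$ arising from $-\Delta u + \lambda = f$. Here the standing hypothesis $S(f)\in C_0(\Omega)$ combined with comparison principles for the penalised PDE ensures $u_\rho \to u$ uniformly on $\bar\Omega$, which localises the pointwise support of $c_\rho$ to shrinking neighbourhoods of $\{u=\psi\}$; Assumption~\ref{ass:0} on $m_\rho$ supplies the needed structural properties of $m_\rho'$ (nonnegativity, vanishing behaviour on negatives, and uniform control), while $\Lambda_\rho(u_\rho-\psi) \weaklyto \lambda$ identifies the limit multiplier.

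The main obstacle I anticipate is precisely this last step: transferring the pointwise/Lebesgue-a.e.\ concentration of $c_\rho$ into a genuine \emph{capacitary} statement about $\mu$ that matches the admissibility class of \cite{RW}. This is where quasicontinuity, the capacitary characterisation of Sobolev regularity, and \cref{ass:for_Bsw} on $\psi$ all enter to guarantee that test functions in $H^1_0(\Omega)$ see the limit measure $\mu$ only on (a quasi-everywhere subset of) $\{u=\psi\}$. Once this is established, the membership $L\in \partial_B^{sw}S(f)$ follows directly from \eqref{eq:RW_characterisation}, completing the proof simultaneously for $\Qf^{\mathrm{sm}}$ and $\Qf^{\mathrm{sc}}$.
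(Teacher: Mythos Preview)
Your overall framework matches the paper's: identify $\mu_\rho = c_\rho\,\dx$ as a capacitary measure, use Dal Maso--Mosco compactness to extract a $\gamma$-limit $\mu$ (hence $S_\rho'(f_\rho)\WOT L_\mu$), and then verify that $\mu$ satisfies the admissibility conditions in \eqref{eq:RW_characterisation}. The paper does exactly this (see \cref{lem:existenceOfmuStar} and the proof of \cref{thm:main_result}), so the first step is fine.

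The gap is in your second step. Your proposed route --- uniform convergence $u_\rho\to u$ on $\bar\Omega$ plus pointwise support analysis of $c_\rho$ --- is not what the paper does and is not justified under the stated hypotheses. The assumption $S(f)\in C_0(\Omega)$ concerns only the limit; nothing prevents $f_\rho$ from being merely $H^{-1}(\Omega)$, so $u_\rho$ need not be continuous at all, and no comparison principle will produce uniform convergence from $f_\rho\to f$ in $H^{-1}(\Omega)$. Moreover, even if one had Lebesgue-a.e.\ support localisation of $c_\rho$, you correctly flag (but do not resolve) that this does not transfer directly to a capacitary statement about the $\gamma$-limit $\mu$; and the condition $\mu=\infty$ on $A_s$ is not visible from support considerations alone.

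The paper's actual mechanism is quite different and worth noting. Rather than analysing $\mu$ directly, it proves two \emph{pointwise} properties of the limit $\alpha=L_\mu d$ for arbitrary $d$: (i) $\langle\xi,\alpha\rangle=0$ with $\xi=f+\Delta u$ (\cref{lem:orthogonality}), obtained by splitting $\Omega$ into regions $\{u_\rho-\psi\le k_\rho\}$, $\{k_\rho<u_\rho-\psi\le K_\rho\}$, $\{u_\rho-\psi\ge K_\rho\}$ and exploiting the improved rate $\rho^{-1/2}\norm{(u_\rho-\psi)^+}{L^2}\to 0$ of \cref{lem:improved_rate}; and (ii) $\langle -\Delta\alpha-d,v\rangle=0$ for all $v=0$ q.e.\ on $\{u=\psi\}$ (\cref{prop:multiplerConditionsNew_purple}), obtained via convergence \emph{in capacity} of $u_\rho\to u$ (which follows from $H_0^1$-convergence, no uniform convergence needed). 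These two facts, applied to the specific choices $d=-\Delta v$ with $\{v>0\}=I$ and $d=1$, yield $\mu(I)=0$ and $\mu=\infty$ on $A_s$ respectively. The hypothesis $S(f)\in C_0(\Omega)$ enters only to invoke the characterisation \eqref{eq:RW_characterisation} itself, not to upgrade convergence of $u_\rho$.
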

\cref{thm:main_result_intro}
is a special case of \cref{thm:main_result}
below.

\begin{remark}[On the assumption $S(f) \in C_0(\Omega)$]\label{rem:cts_solution}
	Note that asking for the solution $u = S(f)$ of \eqref{eq:VI} to satisfy $u \in C_0(\Omega)$ (which is needed for the characterisation of \cite[Lemma~4.3]{RW}) is not too restrictive. For example, when $\Omega$ is Lipschitz,   \cite[Theorem 2.7, \S 5]{Rodrigues} guarantees $u \in C(\bar\Omega) \cap H^1_0(\Omega)$ if $\psi \in C(\bar\Omega)$ and $f \in W^{-1,p}(\Omega)$ for $p>n$. An alternative is if $\Omega$ satisfies a uniform exterior cone condition and $n \leq 3$, $\psi \in H^1(\Omega)$ with $\Delta\psi \in L^2(\Omega)$ and $f \in L^2(\Omega)$: then $u$ is even Hölder continuous, see \cite[Theorem 2.7]{NeitzelWachsmuth}.

\end{remark}

In the nonsmooth case \eqref{eq:examples_ns},
we additionally have to assume Gâteaux differentiability of the approximations.
\begin{theorem}\label{thm:intro_nonsmooth}
Let \cref{ass:for_Bsw} hold. For every sequence $\{f_\rho\} \subset H^{-1}(\Omega)$ such that $S_\rho^{\mathrm{m}}$ ($S_\rho^{\mathrm{c}}$) is
	Gâteaux differentiable at $f_\rho$
	and $f_\rho \to f$ with $S(f) \in C_0(\Omega)$, 
there exists a map $\Qf^{\mathrm{m}} \in \partial_B^{sw}S(f)$ ($\Qf^{\mathrm{c}} \in \partial_B^{sw}S(f)$)  such that for a subsequence (that we relabel),
\begin{equation*}
(S_\rho^{\mathrm{m}})'(f_\rho) \WOT \Qf^{\mathrm{m}},
\qquad
((S_\rho^{\mathrm{c}})'(f_\rho) \WOT \Qf^{\mathrm{c}}).
\end{equation*}
\end{theorem}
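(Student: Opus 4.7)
The plan is to parallel the argument for \cref{thm:main_result_intro} but replace the classical derivative $\Lambda_\rho'$ of the penalty by a measurable coefficient $\eta_\rho$ extracted from the Gâteaux derivative of $S_\rho^{\mathrm{m}}$ at $f_\rho$. The a priori estimates on $u_\rho^{\mathrm{m}} := S_\rho^{\mathrm{m}}(f_\rho)$ and its strong $H^1_0$-convergence to $u = S(f)$ are unchanged from the smooth case and follow from the standard penalty-VI theory together with $f_\rho \to f$ in $H^{-1}(\Omega)$.

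The first and novel step is to derive a linearised equation for $\alpha_\rho(d) := (S_\rho^{\mathrm{m}})'(f_\rho)(d)$. Using the $1$-Lipschitz continuity of $(\cdot)^+$, the difference quotient
\[
w_\rho^t := \frac{S_\rho^{\mathrm{m}}(f_\rho + t d) - S_\rho^{\mathrm{m}}(f_\rho)}{t}
\]
satisfies $-\Delta w_\rho^t + \tfrac{1}{\rho}\theta_\rho^t\, w_\rho^t = d$ for some measurable $\theta_\rho^t \in [0,1]$ a.e., where $\theta_\rho^t = 1$ on $\{u_\rho^{\mathrm{m}} > \psi\}$ and $\theta_\rho^t = 0$ on $\{u_\rho^{\mathrm{m}} < \psi\}$ for $|t|$ small (by continuity of $u_\rho^{\mathrm{m}}$ and $u_\rho^t$). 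The Gâteaux differentiability assumption gives $w_\rho^t \to \alpha_\rho(d)$ in $H_0^1(\Omega)$ as $t \to 0$; after extracting a weak-$^*$ $L^\infty$-limit of the coefficients, one obtains a measurable $\eta_\rho \in [0, 1/\rho]$, independent of $d$ by linearity of the Gâteaux derivative, with $\eta_\rho = 1/\rho$ a.e.\ on $\{u_\rho^{\mathrm{m}} > \psi\}$ and $\eta_\rho = 0$ a.e.\ on $\{u_\rho^{\mathrm{m}} < \psi\}$, such that
\[
-\Delta \alpha_\rho(d) + \eta_\rho \alpha_\rho(d) = d \quad \text{in } H^{-1}(\Omega).
\]

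From this equation onward the argument tracks that of \cref{thm:main_result_intro}. The nonnegativity $\eta_\rho \geq 0$ makes $\mu_\rho := \eta_\rho \, \mathrm{d}x$ a sequence of (absolutely continuous) capacitary measures, so along a subsequence, $\mu_\rho$ $\gamma$-converges to a capacitary measure $\mu$. Standard properties of $\gamma$-convergence give $\alpha_\rho(d) \weaklyto \alpha(d)$ in $H^1_0(\Omega)$ with $\alpha(d)$ solving $-\Delta \alpha(d) + \mu\alpha(d) = d$, which is exactly $(S_\rho^{\mathrm{m}})'(f_\rho) \WOT L^{\mathrm{m}}$ for the resolvent $L^{\mathrm{m}}d := (-\Delta + \mu)^{-1}d$. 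Membership $L^{\mathrm{m}} \in \partial_B^{sw}S(f)$ then follows from the characterisation of \cite{RW} after verifying that $\mu$ is supported q.e.\ in the coincidence set $\{u = \psi\}$ and satisfies the critical-cone admissibility conditions, exactly as in the smooth case (since $\{u_\rho^{\mathrm{m}} > \psi\}$ retreats into the non-coincidence set in the limit). The case $\Lambda_\rho^{\mathrm{c}}$ is entirely analogous, with the jump of $\eta_\rho$ occurring at $\{u_\rho^{\mathrm{c}} + \rho\bar\lambda > \psi\}$, which still concentrates on $\{u = \psi\}$ in the limit because $\rho\bar\lambda \to 0$ uniformly.

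The main obstacle will be the extraction of a well-defined, direction-independent coefficient $\eta_\rho$ in the nonsmooth setting. In particular, one must rule out direction-dependent contributions from the contact set $\{u_\rho^{\mathrm{m}} = \psi\}$, which is precisely what the Gâteaux differentiability hypothesis is designed to provide. Once this is established, the capacity-theoretic machinery of \cref{sec:char_limits} used in the smooth case applies essentially unchanged, because only the nonnegativity and Borel-measurability of $\eta_\rho$ (not its explicit form) are needed for the $\gamma$-convergence step and the identification of the limit as an element of $\partial_B^{sw}S(f)$.
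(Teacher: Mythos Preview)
Your overall strategy---represent $(S_\rho^{\mathrm{m}})'(f_\rho)$ by a nonnegative $L^\infty$ coefficient $\eta_\rho$, pass to a $\gamma$-limit $\mu$, then verify $\mu(I)=0$ and $\mu=\infty$ on $A_s$ via \eqref{eq:RW_characterisation}---is exactly the paper's; \cref{thm:intro_nonsmooth} is obtained there as a special case of \cref{thm:main_result} once the nonsmooth penalties are checked against \cref{ass:1} (\cref{lem:satisfaction_new}). Two steps in your sketch, however, are not sound as written.

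First, the extraction of a \emph{direction-independent} $\eta_\rho$ from weak-$*$ limits of the difference-quotient coefficients $\theta_\rho^t$ is not justified: different directions $d$ and different subsequences $t\to 0$ can a priori produce different weak-$*$ limits, and invoking ``linearity of the Gâteaux derivative'' does not by itself force them to agree. You also appeal to continuity of $u_\rho^{\mathrm{m}}$, which is not available. The paper avoids this entirely via \cref{lem:linearity_etc_of_murho}: since $S_\rho'(f_\rho)$ is the inverse of $-\Delta+\Lambda_\rho'(u_\rho-\psi)$, linearity of the former forces the (always-existing) Hadamard directional derivative $\Lambda_\rho'(u_\rho-\psi)$ to be linear; the Nemytskii formula \eqref{eq:formula_for_dir_der} then yields $|\{u_\rho^{\mathrm{m}}=\psi\}|=0$ and $\eta_\rho=\tfrac{1}{\rho}\chi_{\{u_\rho^{\mathrm{m}}>\psi\}}$ at once, with no limiting procedure. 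Second, your verification of the limiting conditions is too thin and partly misdirected: the phrase ``$\{u_\rho^{\mathrm{m}}>\psi\}$ retreats into the non-coincidence set'' points the wrong way (that set concentrates near the \emph{coincidence} set), and naive set-convergence does not transfer through $\gamma$-limits. More seriously, you give no argument for $\mu=\infty$ on $A_s$. The paper handles both conditions through the results of \cref{sec:fixed_d} (\cref{lem:orthogonality} and \cref{prop:multiplerConditionsNew_purple}, which are proved simultaneously for the smooth and nonsmooth cases of \cref{ass:1}), combined with specific test directions ($d=-\Delta v$ with $\{v>0\}=I$, and $d=1$) in the proof of \cref{thm:main_result}; these do not follow ``exactly as in the smooth case'' without that analysis.
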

\cref{thm:intro_nonsmooth}
is also a special case of \cref{thm:main_result}
below.

The nonsmooth result \cref{thm:intro_nonsmooth} contains the assumption that each $S_\rho$ is G\^ateaux differentiable at $f_\rho$, which can be justified in the following sense. We rely on the key observation that there exists a dense set $F \subset H^{-1}(\Omega)$ such that $S_\rho^{\mathrm{m}}$ and $S_\rho^{\mathrm{c}}$ are G\^ateaux differentiable from $F$ into $H^1_0(\Omega)$ for every $\rho$ taken from a countable set (see \cref{lem:density_result_general}).
Then the above theorem can always be applied for every constant sequence $f_{\rho_n} \equiv f \in F$ such that $S(f) \in C_0(\Omega)$.

In order to cover these cases without repetition and to generalise the structure of the penalty term as much as possible, we consider an abstract problem formulation, as described in the next subsection. In fact, as mentioned, \cref{thm:main_result_intro} and \cref{thm:intro_nonsmooth} are consequences of our more general result \cref{thm:main_result}. Finally, we also mention \cref{thm:oc_result} where we obtain a first-order stationarity condition for an optimal control problem with a VI constraint.

\begin{remark}[Generalisation to other elliptic operators]\label{rem:on_A}
In this paper we consider the elliptic operator in \eqref{eq:VI} and related problems to be the Laplacian because the characterisation of $\partial_B^{sw}S(f)$ from \cite{RW} was shown for the Laplacian.  Our results up to (but not including) the proof of \cref{thm:main_result} work in a more general setting where $-\Delta$ is replaced by a linear, bounded, coercive operator
$A\colon H^1_0(\Omega) \to H^{-1}(\Omega)$ {which additionally satisfies $\langle Au^-, u^+ \rangle \leq 0$. 
}
The estimates below should be adjusted to include the coercivity and boundedness constants. 
\end{remark}
\section{Abstract setup and properties of the penalised problem}\label{sec:abstract}
Throughout,
we equip the Sobolev space $H_0^1(\Omega)$ with the inner product
\begin{equation*}
	(u, v)_{H_0^1(\Omega)} := \int_\Omega \nabla u \cdot \nabla v \d x.
\end{equation*}

\subsection{Setup}

For the results of this section, it suffices to take an obstacle $\psi \in H^1(\Omega)$ with $\psi|_{\partial\Omega} \geq 0$ in the sense that $\min(0, \psi) \in H_0^1(\Omega)$.

Let us now formulate an abstract penalty term. For each $\rho>0$, we work with a general mapping $\Lambda_\rho\colon H^1_0(\Omega) \to H^{-1}(\Omega)$  defined as a Nemytskii map of a function $\lambda_\rho \colon \Omega \times \mathbb{R} \to \mathbb{R}$,
i.e.,
\[\Lambda_\rho(u)(x) := \lambda_\rho(x, u(x)).\]
We make the following standing assumption on $\lambda_\rho$.
\begin{ass}\label{ass:1}
We assume that 
\begin{enumerate}[label=(\roman*)]\itemsep=0em
\item\label{item:i} for all $\rho \in (0,\infty)$,
	$\lambda_\rho\colon \Omega \times \mathbb{R} \to \mathbb{R}$ is a Carathéodory function,
\item\label{item:ii} for all $\rho \in (0,\infty)$,
	$\lambda_\rho(x, \cdot) \colon \mathbb{R} \to \mathbb{R}$ is increasing and convex for a.a.\ $x \in \Omega$,
\item\label{item:iii} for all $\rho \in (0,\infty)$,
	there exist
	$\kzero_\rho, \kone_\rho \in L^\infty(\Omega)$
	and
	$\jj_\rho \in L^2(\Omega)$
	with
	$\kzero_\rho \leq \kone_\rho$
	such that
	\begin{equation}
		\label{eq:structure_lambda}
		\lambda_\rho(x, r) = \begin{cases}
			0 &\text{if } r \leq \kzero_\rho(x),\\
			\frac{r+\jj_\rho(x)}{\rho} &\text{if } r  \geq \kone_\rho(x),
		\end{cases}
	\end{equation}
and, for some $C > 0$,
\begin{align}
\kzero_\rho &\to 0 \text{ in } L^\infty(\Omega)\text{ as } \rho \to 0\label{ass:k0ConvergesToZeroLinf_purple},\\
\norm{\kzero_\rho}{L^2(\Omega)} +
\norm{\kone_\rho}{L^2(\Omega)}  &\leq C\rho \quad\forall\rho \in (0,1]\label{ass:kgrowth},
\end{align}
\item\label{item:iv} $\lambda_\rho(x, \cdot) \in C^1(\mathbb{R})$ for all $\rho \in (0,\infty)$ and a.a.\ $x \in \Omega$
or $\kzero_\rho \equiv \kone_\rho$. 
\end{enumerate}
\end{ass}
Throughout this work, bearing in mind \cref{ass:1} \ref{item:iv}, we refer to the case that $\lambda_\rho(x,\cdot) \in C^1(\R)$ for a.a.\ $x \in \Omega$
			as the ``smooth case''
			and to $\kzero_\rho \equiv \kone_\rho$
			as the ``nonsmooth case''. 
\begin{remark}\leavevmode
	\begin{enumerate}[label=(\roman*)]\itemsep=0em
		\item
			The smoothness $\lambda_\rho(x,\cdot) \in C^1(\R)$ is equivalent to $\kzero_\rho(x) < \kone_\rho(x)$.
		\item Since we know that $\lambda_\rho(x,\cdot) \geq 0$, we have $\lambda_\rho(x,\kone_\rho(x)) \geq 0$, which yields
			\begin{equation}
				\kone_\rho+\jj_\rho \geq 0\label{ass:k1PlusJNonNeg}.
			\end{equation}
	\end{enumerate}
\end{remark}

\begin{lem}
	\label{lem:satisfaction_new}
\cref{ass:1} is satisfied for the nonsmooth choices in \eqref{eq:examples_ns}.
\end{lem}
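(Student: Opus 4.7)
The plan is to verify each item of \cref{ass:1} directly for the two choices, by exhibiting the triple $(\kzero_\rho,\kone_\rho,\jj_\rho)$ in each case; since both penalties place us in the regime $\kzero_\rho \equiv \kone_\rho$, item \ref{item:iv} will come essentially for free.

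For $\Lambda_\rho^{\mathrm{m}}$, I set $\lambda_\rho^{\mathrm{m}}(x,r) := \rho^{-1} r^+$ and choose
\begin{equation*}
	\kzero_\rho \equiv \kone_\rho \equiv 0, \qquad \jj_\rho \equiv 0.
\end{equation*}
Then $\lambda_\rho^{\mathrm{m}}$ does not depend on $x$, is continuous in $r$, and hence Carathéodory; it is convex and increasing as a positive multiple of $(\cdot)^+$; the structural identity \eqref{eq:structure_lambda} is immediate from the definition of the positive part. All three functions are identically zero, so \eqref{ass:k0ConvergesToZeroLinf_purple} and \eqref{ass:kgrowth} are trivial, and $\kzero_\rho \equiv \kone_\rho$ places us in the nonsmooth branch of \ref{item:iv}.

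For $\Lambda_\rho^{\mathrm{c}}$, I set $\lambda_\rho^{\mathrm{c}}(x,r) := \rho^{-1}(\rho\bar\lambda(x) + r)^+$ and choose
\begin{equation*}
	\kzero_\rho(x) := -\rho\,\bar\lambda(x), \qquad \kone_\rho(x) := -\rho\,\bar\lambda(x), \qquad \jj_\rho(x) := \rho\,\bar\lambda(x).
\end{equation*}
The Carathéodory property follows from measurability of $\bar\lambda$ and continuity of $r \mapsto (\rho\bar\lambda(x) + r)^+$; monotonicity and convexity in $r$ follow because $(\cdot)^+$ is increasing and convex and the argument is an increasing affine function of $r$. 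The case split in \eqref{eq:structure_lambda} is a direct verification: for $r \le \kzero_\rho(x)$ one has $\rho\bar\lambda(x) + r \le 0$, hence $\lambda_\rho^{\mathrm{c}}(x,r) = 0$, while for $r \ge \kone_\rho(x)$ one has $\lambda_\rho^{\mathrm{c}}(x,r) = (r+\jj_\rho(x))/\rho$. Because $\bar\lambda \in L^\infty(\Omega)$, we get $\|\kzero_\rho\|_{L^\infty(\Omega)} = \rho\|\bar\lambda\|_{L^\infty(\Omega)} \to 0$, proving \eqref{ass:k0ConvergesToZeroLinf_purple}, and using boundedness of $\Omega$ together with $\bar\lambda \in L^\infty(\Omega) \hookrightarrow L^2(\Omega)$ yields
\begin{equation*}
	\|\kzero_\rho\|_{L^2(\Omega)} + \|\kone_\rho\|_{L^2(\Omega)} = 2\rho\|\bar\lambda\|_{L^2(\Omega)} \le C\rho
\end{equation*}
for a suitable $C$, establishing \eqref{ass:kgrowth}. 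Finally, $\jj_\rho = \rho\bar\lambda \in L^\infty(\Omega) \subset L^2(\Omega)$ and $\kzero_\rho \equiv \kone_\rho$, so \ref{item:iv} is again satisfied in the nonsmooth branch.

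There is really no obstacle here: the statement is a bookkeeping check, and the only small point worth flagging is that for $\Lambda_\rho^{\mathrm{c}}$ one must shift the threshold by $-\rho\bar\lambda$ rather than place it at zero, so that both the vanishing region of $\lambda_\rho$ and the linear region match the structural prescription \eqref{eq:structure_lambda}; with that shift, the affine piece of $\lambda_\rho^{\mathrm{c}}$ automatically carries the offset $\jj_\rho = \rho\bar\lambda$.
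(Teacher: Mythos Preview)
Your proof is correct and follows the same approach as the paper: you identify the same triples $(\kzero_\rho,\kone_\rho,\jj_\rho)$ in each case and verify the items of \cref{ass:1}. The paper's proof is terser, simply recording these choices and stating that the verification is routine, whereas you spell out the individual checks explicitly.
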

\begin{proof}
Observe that for $\lambda_\rho^\mathrm{m}(r) = \frac{1}{\rho}r^+$,  
\[\kzero_\rho=\kone_\rho=\jj_\rho=0,\]
whereas for $\lambda_\rho^\mathrm{c} (x,r) = (1\slash \rho)(\rho\bar \lambda(x) +  r)^+$, we have
\[\kzero_\rho(x) = \kone_\rho(x) = - \rho\bar \lambda(x), \quad \jj_\rho(x) = \rho\bar\lambda(x).\]
From this information we can verify the claim without difficulty.
\end{proof}
Regarding the smooth cases, let us first give sufficient conditions on the structure of $m_\rho$ that will enable us to verify \cref{ass:1} more easily.
\begin{ass}\label{ass:0}
	Let  $m_\rho \in C^1(\R)$ be given for all $\rho > 0$,
	such that there exist
	$\theta_\rho, \Theta_\rho, l_\rho \in \mathbb{R}$  with
	\begin{equation*}
		m_\rho(r) = 0 \quad\text{for all } r \le \theta_\rho,
		\qquad
		m_\rho(r) = r + l_\rho \quad\text{for all } r \ge \Theta_\rho,
	\end{equation*}
	for all $\rho > 0$
	and
	\begin{equation}
		|\theta_\rho| +
		|\Theta_\rho| \leq C\rho \quad\forall\rho \in (0,1]\label{ass:Thetagrowth}
	\end{equation}
	for some constant $C > 0$.
\end{ass}

\begin{lem}
If a function $m_\rho$ satisfies \cref{ass:0}, then \cref{ass:1} is satisfied by both $\Lambda_\rho^{\mathrm{sm}}$ and $\Lambda_\rho^{\mathrm{sc}}$ defined as in \eqref{eq:examples_smooth}.
\end{lem}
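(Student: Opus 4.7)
The plan is to verify each of the four items of \cref{ass:1} by a direct computation, taking $\kzero_\rho$, $\kone_\rho$, $\jj_\rho$ as natural translations of $\theta_\rho$, $\Theta_\rho$, $l_\rho$. For $\Lambda_\rho^{\mathrm{sm}}$ the associated Nemytskii integrand is $\lambda_\rho^{\mathrm{sm}}(x,r) = \rho^{-1} m_\rho(r)$, which has no $x$-dependence, so the defining equalities of $m_\rho$ in \cref{ass:0} suggest the constant choices $\kzero_\rho := \theta_\rho$, $\kone_\rho := \Theta_\rho$, $\jj_\rho := l_\rho$. For $\Lambda_\rho^{\mathrm{sc}}$, the integrand is $\lambda_\rho^{\mathrm{sc}}(x,r) = \rho^{-1} m_\rho(\rho\bar\lambda(x)+r)$, and the same thresholds on the argument $\rho\bar\lambda(x)+r$ translate into $\kzero_\rho(x) := \theta_\rho - \rho\bar\lambda(x)$, $\kone_\rho(x) := \Theta_\rho - \rho\bar\lambda(x)$, $\jj_\rho(x) := l_\rho + \rho\bar\lambda(x)$; in particular $\kzero_\rho \le \kone_\rho$ in both cases since $\theta_\rho \le \Theta_\rho$.

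With these identifications, item \ref{item:i} follows from the continuity of $m_\rho$ together with the measurability (and essential boundedness) of $\bar\lambda$, so that $\lambda_\rho$ is even continuous in $r$ and measurable in $x$. Item \ref{item:ii} is inherited from the monotonicity and convexity of $m_\rho$ (as a smoothed positive part), preserved under the affine change of variables $r \mapsto \rho\bar\lambda(x)+r$ at each fixed $x$. Item \ref{item:iv} holds because $m_\rho \in C^1(\R)$ makes $\lambda_\rho(x,\cdot) \in C^1(\R)$ in both cases.

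The bulk of the verification is in item \ref{item:iii}. The piecewise structure \eqref{eq:structure_lambda} reduces to a substitution: on $\{r \le \kzero_\rho(x)\}$ one has $\rho\bar\lambda(x)+r \le \theta_\rho$ so $m_\rho(\cdot) = 0$, while on $\{r \ge \kone_\rho(x)\}$ one has $\rho\bar\lambda(x)+r \ge \Theta_\rho$ so $m_\rho(\rho\bar\lambda(x)+r) = \rho\bar\lambda(x)+r+l_\rho$, giving $\lambda_\rho^{\mathrm{sc}}(x,r) = (r+\jj_\rho(x))/\rho$ with the chosen $\jj_\rho$. For the norm bounds, in the smooth case $\kzero_\rho$, $\kone_\rho$, $\jj_\rho$ are constants on $\Omega$, so the convergence $\kzero_\rho = \theta_\rho \to 0$ in $L^\infty(\Omega)$ and the bound $\norm{\kzero_\rho}{L^2(\Omega)} + \norm{\kone_\rho}{L^2(\Omega)} \le (|\theta_\rho|+|\Theta_\rho|)|\Omega|^{1/2}$ follow directly from \eqref{ass:Thetagrowth}. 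In the shifted case, one picks up an extra $\rho\bar\lambda(x)$ term, controlled in $L^\infty$ by $\rho\norm{\bar\lambda}{L^\infty(\Omega)}$ and in $L^2$ by $\rho\norm{\bar\lambda}{L^2(\Omega)}$ (using boundedness of $\Omega$); both vanish at rate $O(\rho)$, absorbing into the constant in \eqref{ass:kgrowth}.

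I do not expect a real obstacle; the lemma is essentially bookkeeping. The only point deserving care is the shifted case, where one needs the $L^\infty$-regularity of $\bar\lambda$ (rather than only $L^2$) in order to preserve the $L^\infty$-convergence $\kzero_\rho \to 0$ required by \eqref{ass:k0ConvergesToZeroLinf_purple}.
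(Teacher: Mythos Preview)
Your proposal is correct and follows essentially the same approach as the paper: you make the identical identifications $\kzero_\rho = \theta_\rho$, $\kone_\rho = \Theta_\rho$, $\jj_\rho = l_\rho$ in the $\mathrm{sm}$ case and $\kzero_\rho = \theta_\rho - \rho\bar\lambda$, $\kone_\rho = \Theta_\rho - \rho\bar\lambda$, $\jj_\rho = \rho\bar\lambda + l_\rho$ in the $\mathrm{sc}$ case, and then verify the items of \cref{ass:1} directly. Your write-up is in fact more detailed than the paper's (which simply records these identifications and asserts the conclusion), and your observation that $\bar\lambda \in L^\infty(\Omega)$ is what drives the $L^\infty$-convergence \eqref{ass:k0ConvergesToZeroLinf_purple} in the shifted case is a useful remark.
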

\begin{proof}
The first part of \cref{ass:0} directly implies that \cref{ass:1} \ref{item:i}, \ref{item:ii} and \ref{item:iv} are satisfied. The remaining part follows from observing that in the $\Lambda_\rho^{\mathrm{sm}}$ case, we have $\kzero_\rho = \theta_\rho$, $\kone_\rho = \Theta_\rho$ and $j_\rho = l_\rho$,   and in the $\Lambda_\rho^{\mathrm{sc}}$ case, we have $\kzero_\rho = \theta_\rho-\rho\bar\lambda$, $K_\rho = \Theta_\rho-\rho\bar\lambda$ and $j_\rho = \rho\bar\lambda + l_\rho$. 

\end{proof}
\begin{remark}\label{rem:tilde_penalisation}
The penalisation $\Lambda_\rho^{\widetilde{\mathrm{sc}}}$ from \eqref{eq:tildeLambda_rho} can be handled too and in fact we can in this situation weaken the last condition of \cref{ass:0}: we would need the parameters of $\tilde m_\rho$ to satisfy only
\begin{equation*}
|\theta_\rho| +
|\Theta_\rho| \leq C\quad\forall\rho \in (0,1].
\end{equation*}
Indeed, if $\tilde m_\rho$ is a regularisation with the structure presented in \cref{ass:0}
we find that $m_\rho$ defined as in \eqref{eq:machet_die_tilde_weg} has associated parameters $\hat\theta_\rho := \rho\theta_\rho$, $\hat\Theta_\rho := \rho\Theta_\rho$ and $\hat l_\rho := \rho l_\rho$, which all contain a helpful factor of $\rho$.
\end{remark} 
Let us now look at some examples of $m_\rho$ that satisfy \cref{ass:0}.
\begin{lem}\label{lem:regularisations_satisfy_ass}
The following choices of $m_\rho$ satisfy \cref{ass:0}.
\begin{enumerate}[label=(\roman*)]
\item The global regularisation {(also known as the Huber regularisation)} used, e.g., in \cite{MR2822818}:
\begin{align}\label{eq:eg_mrho_new}
m_\rho(r)
:= \begin{cases}
0 &\text{if } r \leq 0,\\
\frac{r^2}{2\rho} &\text{if } 0 < r < \rho, \\
r-\frac{\rho}{2} &\text{if } r \geq \rho,
\end{cases}
\end{align}
with
$\theta_\rho = 0$, $\Theta_\rho = \rho$, $l_\rho = -\rho\slash 2$. 
\item The regularisation from \cite{KunischWachsmuthPathFollowing}:
\begin{align}\label{eq:eg_mgamma_new}
\tilde{m}_\rho(r) := \begin{cases}
	0 &\text{if } r \leq -\frac{\rho}{2},\\
\frac{1}{2\rho^3}\left(r + \frac{\rho}{2}\right)^3\left(\frac{3\rho}{2}-r\right)  &\text{if $-\frac{\rho}{2} < r < \frac{\rho}{2}$},\\
r &\text{if $r \geq \frac{\rho}{2}$},
\end{cases}
\end{align}
with
$\theta_\rho = -\frac{\rho}{2}$, $\Theta_\rho = \frac{\rho}{2}$, $l_\rho = 0$.
\item The local regularisation from \cite{MR2822818}:
\[m_\rho^l(r)
:= \begin{cases}
0 &\text{if } r \leq -\rho,\\
\frac{r^2}{4\rho} + \frac{r}{2} + \frac{\rho}{4}&\text{if } -\rho < r < \rho, \\
r &\text{if } r \geq \rho,
\end{cases}
\]
with
$\theta_\rho = -\rho$, $\Theta_\rho = \rho$, $l_\rho = 0$.

\item The regularisation from \cite{KunischWachsmuth}: 
\[\hat{m}_\rho(r) := \begin{cases}
		0 & \text{if } r \leq -\frac{\rho}{2},\\
		\frac{1}{2\rho}\left(r + \frac{\rho}{2}\right)^2 & \text{if $-\frac{\rho}{2} < r < \frac{\rho}{2}$},\\
		r & \text{if $r \geq \frac{\rho}{2}$},
\end{cases}\]
with
$\theta_\rho = -\frac{\rho}{2}$, $\Theta_\rho = \frac{\rho}{2}$, $l_\rho = 0$.
\end{enumerate}
Hence $\Lambda_\rho^{sm}$ and $\Lambda_\rho^{sc}$ as defined in \eqref{eq:examples_smooth} associated to each of the above regularisations satisfy \cref{ass:1}.
\end{lem}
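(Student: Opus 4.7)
The statement is a verification claim, so the plan is a case-by-case check of the four piecewise functions against the three requirements of \cref{ass:0}: (a) $C^1$ regularity on $\R$; (b) the left-most piece is identically $0$ for $r \le \theta_\rho$ and the right-most piece equals $r + l_\rho$ for $r \ge \Theta_\rho$; (c) $|\theta_\rho| + |\Theta_\rho| \le C\rho$ for $\rho \in (0,1]$.

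For each of the four functions I would proceed in three short steps. First, I would read off the claimed parameters $(\theta_\rho, \Theta_\rho, l_\rho)$ directly from the definition: in every case, the piecewise form already has a zero branch on $(-\infty, \theta_\rho]$ and a pure shift $r + l_\rho$ on $[\Theta_\rho, \infty)$, so (b) holds by inspection. Second, I would check continuity and differentiability at the two transition points $\theta_\rho$ and $\Theta_\rho$ by evaluating the branches and their derivatives from both sides; this is just plugging in numbers. For example, for \eqref{eq:eg_mrho_new} one checks $m_\rho(0) = 0$, $m_\rho'(0^-) = 0 = m_\rho'(0^+)$ and $m_\rho(\rho) = \rho/2$, $m_\rho'(\rho^-) = 1 = m_\rho'(\rho^+)$. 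For \eqref{eq:eg_mgamma_new} a short computation gives
\begin{equation*}
\tilde m_\rho'(r) = \frac{2}{\rho^3}\bigl(r + \tfrac{\rho}{2}\bigr)^2 (\rho - r) \quad\text{on } (-\tfrac{\rho}{2},\tfrac{\rho}{2}),
\end{equation*}
which vanishes at $r = -\rho/2$ and equals $1$ at $r = \rho/2$, with matching function values $0$ and $\rho/2$. The remaining two cases are analogous and marginally simpler. Third, the bound $|\theta_\rho| + |\Theta_\rho| \le C\rho$ is immediate from the explicit values with $C = 2$ (in fact $C = 1$ in cases (ii) and (iv)), valid for all $\rho > 0$ and hence in particular for $\rho \in (0,1]$.

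Finally, the concluding sentence follows by combining the three verified items with the preceding lemma, which asserts that \cref{ass:0} for $m_\rho$ implies \cref{ass:1} for both $\Lambda^{\mathrm{sm}}_\rho$ and $\Lambda^{\mathrm{sc}}_\rho$; no additional argument is needed.

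There is no genuine obstacle here: the proof is purely computational, and the only mild subtlety is the $C^1$ matching at the interior transition point of \eqref{eq:eg_mgamma_new}, where one must factor $(r+\rho/2)^2$ out of the derivative to see that both the value and the derivative vanish at $r = -\rho/2$. Everywhere else, continuity and derivative matching are essentially one-line evaluations.
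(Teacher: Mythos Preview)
Your proposal is correct and follows exactly the approach the paper takes: the paper's proof consists of the single sentence ``We can verify the claim using the information presented below each choice,'' and you have simply spelled out that verification in detail. The only difference is level of explicitness, not method.
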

\begin{proof}
We can verify the claim using the information presented below each choice.
\end{proof}
 
\subsection{First properties}
\begin{lem}\label{lem:basic_properties}
{Let \cref{ass:1} hold.} 
		For all $\rho \in (0,\infty)$
		and a.a.\ $x \in \Omega$
the following holds.
\begin{enumerate}[label=(\roman*)]
	\item
		The map $\lambda_\rho(x,\cdot)$ is directionally differentiable.
	In the nonsmooth case (with $\kzero_\rho \equiv \kone_\rho$), the directional derivative is
	given by
\begin{equation}
	\lambda_\rho'(x,r)(h) = \frac{1}{\rho}\chi_{\{r = \kzero_\rho\}}(x)h^+ + \frac{1}{\rho}\chi_{\{r > \kzero_\rho\}}(x)h
	\qquad\forall r,h \in \R.
\label{eq:formula_for_dir_der}
\end{equation}
\item In the smooth case we have
\begin{equation}
0 \leq \lambda_\rho'(x,r) \leq \frac{1}{\rho} \qquad \forall r \in \R.\label{eq:lambdaRhoDerivIncIfC1}
\end{equation}
\item We have
\begin{align}
\lambda_\rho'(x, \kzero_\rho(x))(\alpha)\alpha \geq 0,
\quad
\lambda_\rho'(x, \kzero_\rho(x))(\alpha)\alpha^+ &\geq 0 && \forall \alpha \in \mathbb{R},\label{ass:lambdaAtK0Positive}\\
|\lambda_\rho'(x, r)(h)| &\leq \frac{1}{\rho}|h| && \forall r,h \in \R.\label{ass:lambda_rho_deriv_bdd}
\end{align}
\item The function $\lambda_\rho(x,\cdot)$ is Lipschitz continuous uniformly in $x$, i.e.,
\begin{equation}
	|\lambda_\rho(x,u) - \lambda_\rho(x,v)| \leq \frac{1}{\rho} |u-v| \qquad\forall u,v \in \R.\label{eq:lambda_is_lipschitz}
\end{equation}
\item If $\rho \in (0,1]$, we have the growth condition (with the constant $C$ from \eqref{ass:kgrowth})  
\begin{equation}
\norm{\jj_\rho}{L^2(\Omega)} \leq 2 C\rho.\label{eq:jjgrowth}
\end{equation}
\item The map $\Lambda_\rho \colon L^2(\Omega) \to L^2(\Omega)$ is well defined, Lipschitz continuous and directionally differentiable with the derivative
given by 
\begin{equation}
	\label{eq:Lambda_and_lambda}
	\Lambda_\rho'(u)(h)(x) = \lambda_\rho'(x,u(x))(h(x)).
\end{equation}
\item  The map $\Lambda_\rho\colon L^2(\Omega) \to L^2(\Omega)$ is monotone.
\item
	For all $r \in \R$
we have
\begin{equation}
    \lambda_\rho(x,r)r^+ \ge -\frac{K_\rho(x)}{\rho}r^+ + \frac{1}{\rho}|r^+|^2.\label{eq:new_consequence}
\end{equation}
\item
	If $\rho \in (0,1]$,
	we have
	(with the constant $C$ from \eqref{ass:kgrowth})
	\begin{equation}
		\label{eq:NiceEstimateForLambda}
		\norm{\Lambda_\rho(v - \psi)}{L^2(\Omega)}
		\le
		\frac{1}{\rho} \norm{(v - \psi)^+}{L^2(\Omega)} + C
		\qquad\forall v \in H_0^1(\Omega)
		.
	\end{equation}
\end{enumerate}
\end{lem}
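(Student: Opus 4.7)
The plan is to exploit the explicit piecewise structure of $\lambda_\rho$ guaranteed by \cref{ass:1} \ref{item:iii}, together with the convexity/monotonicity from \ref{item:ii}, working pointwise in $x \in \Omega$ and then lifting to $L^2(\Omega)$ by standard Nemytskii arguments. For (i)--(iv): in the smooth case $\lambda_\rho(x,\cdot) \in C^1(\R)$ is trivially directionally differentiable, and \eqref{eq:lambdaRhoDerivIncIfC1} follows from convexity combined with the flat value $0$ on $(-\infty, \kzero_\rho]$ and slope $1/\rho$ on $[\kone_\rho,\infty)$. In the nonsmooth case $\kzero_\rho \equiv \kone_\rho$, the function is zero on $(-\infty,\kzero_\rho]$ and affine with slope $1/\rho$ on $[\kzero_\rho,\infty)$, so \eqref{eq:formula_for_dir_der} is trivial for $r \neq \kzero_\rho$ and at $r = \kzero_\rho$ the limit $\lim_{t\downarrow 0} \lambda_\rho(x,\kzero_\rho+th)/t$ computes directly to $h^+/\rho$. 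The bound \eqref{ass:lambdaAtK0Positive} then follows from \eqref{eq:formula_for_dir_der} in the nonsmooth case and from \eqref{eq:lambdaRhoDerivIncIfC1} in the smooth case (where $\lambda_\rho'(x,\kzero_\rho) \ge 0$), while \eqref{ass:lambda_rho_deriv_bdd} holds in both settings since the directional derivative is bounded pointwise by $|h|/\rho$. Lipschitz continuity \eqref{eq:lambda_is_lipschitz} follows from this uniform bound.

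For (v), the key observation is that \eqref{eq:lambda_is_lipschitz} applied between $r = \kzero_\rho$ (where $\lambda_\rho = 0$) and $r = \kone_\rho$ (where $\lambda_\rho = (\kone_\rho + \jj_\rho)/\rho$) yields $\kone_\rho + \jj_\rho \le \kone_\rho - \kzero_\rho$, i.e.\ $\jj_\rho \le -\kzero_\rho$; combined with $\jj_\rho \ge -\kone_\rho$ from \eqref{ass:k1PlusJNonNeg}, this gives the pointwise bound $|\jj_\rho| \le |\kzero_\rho| + |\kone_\rho|$, after which \eqref{ass:kgrowth} delivers \eqref{eq:jjgrowth}. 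Item (vi) is then standard Nemytskii: the pointwise Lipschitz estimate together with $\lambda_\rho(\cdot,0) \in L^2(\Omega)$ (bounded pointwise by $|\kzero_\rho|/\rho$, which is in $L^\infty$ by \eqref{ass:k0ConvergesToZeroLinf_purple}) makes $\Lambda_\rho \colon L^2(\Omega) \to L^2(\Omega)$ well defined and $1/\rho$-Lipschitz, while the pointwise formula \eqref{eq:Lambda_and_lambda} for the directional derivative lifts to $L^2$ by dominated convergence, since the difference quotients $(\lambda_\rho(x, u+th) - \lambda_\rho(x,u))/t$ are dominated uniformly in $t>0$ by $|h(x)|/\rho \in L^2(\Omega)$. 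Monotonicity (vii) is pointwise monotonicity of $\lambda_\rho(x,\cdot)$ integrated over $\Omega$.

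Inequality (viii) is trivial for $r \le 0$ since both sides vanish, and for $r > 0$ reduces to $\rho\lambda_\rho(x,r) + \kone_\rho(x) \ge r$: on $\{r \ge \kone_\rho\}$ this uses $\rho\lambda_\rho = r + \jj_\rho$ and $\jj_\rho \ge -\kone_\rho$, and on $\{0 < r < \kone_\rho\}$ it follows from $\rho\lambda_\rho \ge 0$ and $\kone_\rho > r$. For (ix), Lipschitz continuity together with $\lambda_\rho(x,\kzero_\rho(x)) = 0$ yields $|\lambda_\rho(x,r)| \le (r - \kzero_\rho(x))^+/\rho \le (r^+ + |\kzero_\rho(x)|)/\rho$; applying this with $r = v(x) - \psi(x)$, taking $L^2$ norms, and invoking $\|\kzero_\rho\|_{L^2(\Omega)} \le C\rho$ from \eqref{ass:kgrowth} gives the claimed estimate. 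The most delicate step is (v), where the two-sided bound on $\jj_\rho$ must be assembled from the Lipschitz estimate (iv) and \eqref{ass:k1PlusJNonNeg}; once that is in place, every remaining item reduces to pointwise bookkeeping followed by integration.
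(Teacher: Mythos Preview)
Your proposal is correct and follows essentially the same route as the paper's proof: pointwise analysis of the piecewise structure from \cref{ass:1}, convexity/monotonicity for the derivative bounds, Lipschitz via the derivative bound, and dominated convergence for the Nemytskii derivative. The only cosmetic differences are that the paper handles (v) via the single estimate $|\kone_\rho + \jj_\rho| \le |\kone_\rho - \kzero_\rho|$ followed by the triangle inequality (rather than your two-sided sandwich $-\kone_\rho \le \jj_\rho \le -\kzero_\rho$), proves (viii) by first establishing $\lambda_\rho(x,r) \ge (r - \kone_\rho)/\rho$ for all $r$ before multiplying by $r^+$, and in (ix) uses monotonicity to pass to $(v-\psi)^+$ before applying Lipschitz; your justification ``Lipschitz together with $\lambda_\rho(x,\kzero_\rho)=0$'' for the bound $|\lambda_\rho(x,r)| \le (r-\kzero_\rho)^+/\rho$ should strictly also invoke $\lambda_\rho(x,\cdot) = 0$ on $(-\infty,\kzero_\rho]$, but this is already part of \cref{ass:1} \ref{item:iii}.
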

\begin{proof}
\begin{enumerate}[label=(\roman*)]
\item
	In the nonsmooth case,
	we have $\kzero_\rho\equiv \kone_\rho$.
	Consequently, the formula for $\lambda_\rho(x,\cdot)$ yields the differentiability everywhere on points other than at $\kzero_\rho$,    {where it is directionally differentiable. Indeed, a direct calculation reveals, for $t>0$, $r, h \in \mathbb{R}$ and fixed $x \in \Omega$,
 \begin{align*}
        \frac{\lambda_\rho(r+th,x)-\lambda_\rho(r,x)}{t} = \begin{cases}
            0 &\text{if } r < \kzero_\rho,\\
            \frac{1}{\rho}h\chi_{\{h \geq 0\}}&\text{if } r=\kzero_\rho,\\
            \frac{1}{\rho}h&\text{if } r > \kzero_\rho,
        \end{cases}
    \end{align*}
    where we used that $\kzero_\rho=\kone_\rho=-j_\rho$ by continuity. From here the expression \eqref{eq:formula_for_dir_der} follows.}
\item The fact that $\lambda_\rho(x,\cdot)$ is convex and differentiable implies that its derivative is increasing, so this follows from the structure of $\lambda_\rho$ given in \cref{ass:1} \ref{item:iii}.
\item 
Regarding \eqref{ass:lambdaAtK0Positive}, for the nonnegativity, this is clear in the smooth case due to the linearity with respect to the direction and the nonnegativity of the derivative. In the nonsmooth case this follows from the expression \eqref{eq:formula_for_dir_der} for the derivative above.

For the upper bound \eqref{ass:lambda_rho_deriv_bdd}, in the smooth case this follows again by \eqref{eq:lambdaRhoDerivIncIfC1}.
In the nonsmooth case this follows by the expression for the directional derivative above and the fact that the sets appearing in the expression are disjoint.

\item Using the mean value theorem for directional derivatives \cite[Proposition 2.29]{MR2986672} and \eqref{ass:lambda_rho_deriv_bdd}, we obtain the claim.
\item We note that by \eqref{eq:lambda_is_lipschitz},
\[\frac1\rho|\kone_\rho + \jj_\rho| =|\lambda_\rho(\cdot, \kone_\rho) - \lambda_\rho(\cdot, \kzero_\rho)| \leq \frac 1\rho |\kone_\rho - \kzero_\rho|\]
which implies $|\jj_\rho| \leq |\kone_\rho-\kzero_\rho| + {|\kone_\rho|}$ whence \eqref{eq:jjgrowth} follows by \eqref{ass:kgrowth}.
\item The above Lipschitz property implies
\begin{align}
|\lambda_\rho(x,u)| = |\lambda_\rho(x,u) - \lambda_\rho(x, \kzero_\rho(x))| \leq \frac{1}{\rho}|u-\kzero_\rho(x)|\label{eq:bound_on_lambda_u}
\end{align} 
and thus $\Lambda_\rho$ maps $L^2(\Omega)$ to $L^2(\Omega)$. Lipschitz continuity also follows easily by the above.

Due to the directional differentiability of $\lambda_\rho(x,\cdot)$ and a simple dominated convergence theorem argument, using the fact that the first derivative is bounded by \eqref{ass:lambda_rho_deriv_bdd}, we obtain directional differentiability. 
\item This follows from the fact that  $\lambda_\rho(x, \cdot)$ is increasing.
\item
From the basic properties of $\lambda_\rho$ and \eqref{ass:k1PlusJNonNeg} we get
\[\lambda_\rho(x, r) \ge \frac{r - K_\rho(x)}{\rho}.\]
Indeed, for $r < K_\rho$, the right-hand side is negative and for $r \ge K_\rho$ this follows from
\eqref{eq:structure_lambda}
and \eqref{ass:k1PlusJNonNeg}.
Now, \eqref{eq:new_consequence} easily follows.
\item
Using the monotonicity of $\lambda_\rho(x,\cdot)$,
$\lambda_\rho(x, \kzero_\rho(x)) = 0$
and the Lipschitz estimate \eqref{eq:lambda_is_lipschitz},
we get
\begin{equation*}
\lambda_\rho(x, v(x)-\psi(x))
\le \lambda_\rho(x, (v(x)-\psi(x))^+)
\leq \frac{1}{\rho}(v(x)-\psi(x))^+ +  \frac 1\rho| \kzero_\rho(x)|
.
\end{equation*}
Taking the $L^2(\Omega)$-norm and using \eqref{ass:kgrowth},
the inequality \eqref{eq:NiceEstimateForLambda} follows.
\qedhere
\end{enumerate}
\end{proof}

Next, we address the well posedness of the PDE \eqref{eq:pde} satisfied by $u_\rho$.
\begin{lem}\label{lem:existence_lipschitz_urho}
{Let \cref{ass:for_Bsw,ass:1} hold.} 	For all $\rho > 0$,
	the solution map $S_\rho\colon H^{-1}(\Omega) \to H^1_0(\Omega)$ of the PDE \eqref{eq:pde} is well defined.
	If $\rho \in (0,1]$,
	we have
\begin{align*}
\norm{S_\rho(f_\rho)}{H_0^1(\Omega)}
		&
		\le
		\norm{f_\rho}{H^{-1}(\Omega)}
		+
		2\norm{\min(0,\psi)}{H_0^1(\Omega)}
		+
		C_P C
		\qquad\forall f_\rho \in H^{-1}(\Omega),
\end{align*}
where $C_P$ is the constant from Poincaré's inequality
and $C$ is from \eqref{ass:kgrowth}.
Furthermore, $S_\rho\colon H^{-1}(\Omega) \to H^1_0(\Omega)$ is Lipschitz continuous with constant $1$.
\end{lem}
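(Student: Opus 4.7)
The plan is to break the proof into three parts: existence of $u_\rho$ via monotone operator theory, the a priori estimate via a well-chosen test function, and Lipschitz continuity with constant $1$ via monotonicity of the penalty.

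For existence, I would consider the operator $T\colon H^1_0(\Omega) \to H^{-1}(\Omega)$ defined by $T u := -\Delta u + \Lambda_\rho(u - \psi)$. The Laplacian makes $T$ strongly monotone with constant $1$, and since $\Lambda_\rho$ is itself monotone on $L^2$ (cf.\ \cref{lem:basic_properties}), this is preserved. Moreover, using the Lipschitz bound \eqref{eq:lambda_is_lipschitz} together with the embeddings $H^1_0(\Omega) \hookrightarrow L^2(\Omega) \hookrightarrow H^{-1}(\Omega)$, the map $u \mapsto \Lambda_\rho(u-\psi)$ is Lipschitz from $H^1_0(\Omega)$ to $H^{-1}(\Omega)$, making $T$ Lipschitz. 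Zarantonello's theorem (or Browder--Minty) then gives bijectivity of $T$, so $S_\rho$ is well-defined.

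For the a priori estimate, the idea is to test the equation with $v := u_\rho - \psi_-$, where $\psi_- := \min(0,\psi) \in H^1_0(\Omega)$ by the assumption on $\psi$. The Laplacian term produces $\|u_\rho - \psi_-\|_{H_0^1(\Omega)}^2$ plus a cross term controlled by $\|\psi_-\|_{H_0^1(\Omega)} \|u_\rho - \psi_-\|_{H_0^1(\Omega)}$ via Cauchy--Schwarz, while the right-hand side contributes at most $\|f_\rho\|_{H^{-1}(\Omega)} \|u_\rho - \psi_-\|_{H_0^1(\Omega)}$. The crux is a lower bound on $\int_\Omega \Lambda_\rho(u_\rho - \psi)(u_\rho - \psi_-) \d x$: since $\psi_- \le \psi$ we have $u_\rho - \psi_- \ge u_\rho - \psi$ pointwise, and since $\Lambda_\rho \ge 0$ it suffices to lower bound $\int_\Omega \Lambda_\rho(u_\rho - \psi)(u_\rho - \psi) \d x$. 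This integrand is nonnegative except on $\{\kzero_\rho < u_\rho - \psi < 0\}$ (which requires $\kzero_\rho < 0$); there, \eqref{eq:lambda_is_lipschitz} together with $\lambda_\rho(\cdot, \kzero_\rho) \equiv 0$ gives the pointwise bound $|\kzero_\rho|^2 / \rho$, which integrates to $O(\rho)$ by \eqref{ass:kgrowth}. Solving the resulting quadratic inequality for $\|u_\rho - \psi_-\|_{H_0^1(\Omega)}$ via Young's inequality and applying the triangle inequality yields the stated bound on $\|u_\rho\|_{H_0^1(\Omega)}$.

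For Lipschitz continuity with constant $1$, given $u_i := S_\rho(f_i)$ for $i=1,2$ I would subtract the two PDEs and test with $u_1 - u_2 \in H^1_0(\Omega)$. The penalty contribution $\int_\Omega (\Lambda_\rho(u_1 - \psi) - \Lambda_\rho(u_2 - \psi))(u_1 - u_2) \d x$ is nonnegative by monotonicity of $\Lambda_\rho$, leaving $\|u_1 - u_2\|_{H_0^1(\Omega)}^2 \le \langle f_1 - f_2, u_1 - u_2\rangle$, from which the bound follows by Cauchy--Schwarz. The main obstacle is the lower bound for the penalty integral in the a priori step, since $\Lambda_\rho$ can be nonzero on a set where $u_\rho - \psi < 0$ (when $\kzero_\rho$ has a nontrivial negative part); the argument must exploit the smallness $\|\kzero_\rho\|_{L^2(\Omega)} \le C\rho$ from \eqref{ass:kgrowth} to absorb this contribution.
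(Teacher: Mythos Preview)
Your argument is correct in all three parts, but it is organised differently from the paper and the a~priori estimate follows a genuinely different route.

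For existence and Lipschitz continuity, the paper works on the $H_0^1$ side: it rewrites \eqref{eq:pde} as $(\Id + T_\rho)(u_\rho) = (-\Delta)^{-1} f_\rho$ with $T_\rho(u) := (-\Delta)^{-1}\Lambda_\rho(u-\psi)$, observes that $T_\rho$ is continuous and monotone (hence maximally monotone), and invokes Minty's theorem to obtain bijectivity of $\Id + T_\rho$ together with the $1$-Lipschitz bound on its inverse in one stroke. Your Zarantonello/Browder--Minty argument on $-\Delta + \Lambda_\rho(\cdot-\psi)$ and the separate subtraction argument for Lipschitz continuity are equivalent reformulations; nothing is gained or lost.

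The real difference is the a~priori bound. The paper does not test the equation at all: having the Lipschitz estimate in hand, it simply notes that $\psi_0 := \min(0,\psi)$ is the \emph{exact} solution $S_\rho(g_\rho)$ for $g_\rho := -\Delta\psi_0 + \Lambda_\rho(\psi_0 - \psi)$, and then writes
\[
\norm{S_\rho(f_\rho)}{H_0^1(\Omega)} \le \norm{S_\rho(f_\rho)-S_\rho(g_\rho)}{H_0^1(\Omega)} + \norm{\psi_0}{H_0^1(\Omega)} \le \norm{f_\rho - g_\rho}{H^{-1}(\Omega)} + \norm{\psi_0}{H_0^1(\Omega)},
\]
finishing with \eqref{eq:NiceEstimateForLambda} and $(\psi_0-\psi)^+ = 0$ to bound $\norm{\Lambda_\rho(\psi_0-\psi)}{L^2(\Omega)} \le C$. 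Your direct energy estimate with the test function $u_\rho - \psi_-$ is more hands-on and requires the pointwise analysis of the penalty on $\{\kzero_\rho < u_\rho - \psi < 0\}$; it works, but produces a final additive term of order $C\sqrt{\rho}$ (hence $\le C$ for $\rho \le 1$) rather than the paper's $C_P C$. Either constant suffices for every downstream use. The paper's trick of comparing to an explicit solution is shorter and avoids the case distinction on the sign of $u_\rho - \psi$, which is the advantage of establishing Lipschitz continuity \emph{before} the a~priori bound rather than after.
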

\begin{proof}
	We define the operator $T_\rho \colon H_0^1(\Omega) \to H_0^1(\Omega)$
	via
	\begin{equation*}
		T_\rho(u) := (-\Delta)^{-1} \Lambda_\rho(u-\psi)
		\qquad\forall u \in H_0^1(\Omega).
	\end{equation*}
	By applying $(-\Delta)^{-1} \colon H^{-1}(\Omega) \to H_0^1(\Omega)$
	to \eqref{eq:pde}, we get
	\begin{equation*}
		\parens{ \operatorname{Id} + T_\rho } (u_\rho) = (-\Delta)^{-1} f_\rho
		.
	\end{equation*}
	Now, it is easy to check that the operator $T_\rho$
	is continuous and monotone.
	Consequently, the operator $T_\rho$ is maximally monotone, see \cite[Proposition~20.27]{BauschkeCombettes2011}.
	Thus, we can apply Minty's theorem, see \cite[Theorem~21.1 and Proposition~23.8]{BauschkeCombettes2011},
	to obtain that $\operatorname{Id} + T_\rho$
	bijective and the inverse has Lipschitz constant $1$.
	Since $-\Delta \colon H_0^1(\Omega) \to H^{-1}(\Omega)$
	is an isometric isomorphism,
	this shows that $S_\rho$ is well defined and has Lipschitz constant $1$.

	It remains
	to provide an estimate for $S_\rho(f_\rho)$.
	We define $\psi_0 := \min(0,\psi)$ and recall that $\psi_0 \in H_0^1(\Omega)$
	by our assumption on $\psi$.
	We observe $S_\rho(g_\rho) = \psi_0$
	for
	$ g_\rho := -\Delta \psi_0 + \Lambda_\rho(\psi_0 - \psi) $.
	Consequently, {the Lipschitz property} of $S_\rho$ yields
	\begin{align*}
		\norm{S_\rho(f_\rho)}{H_0^1(\Omega)}
		&\le
		\norm{S_\rho(f_\rho) - S_\rho(g_\rho)}{H_0^1(\Omega)} + \norm{S_\rho(g_\rho)}{H_0^1(\Omega)}\\
		&\le
		\norm{f_\rho + \Delta\psi_0 - \Lambda_\rho( \psi_0 - \psi)}{H^{-1}(\Omega)} + \norm{\psi_0}{H^1_0(\Omega)}
		\\&
		\le
		\norm{f_\rho}{H^{-1}(\Omega)}
		+
		2\norm{\psi_0}{H_0^1(\Omega)}
		+
		C_P \norm{\Lambda_\rho(\psi_0 - \psi)}{L^2(\Omega)}		
	\end{align*}
    {with the last inequality following from the triangle inequality,  $\norm{\Delta \psi_0}{H^{-1}(\Omega)} = \norm{\psi_0}{H^1_0(\Omega)}$, and applying Poincaré's inequality to estimate the dual norm of $\Lambda_\rho( \psi_0 - \psi)$.}
	The last addend is estimated via \eqref{eq:NiceEstimateForLambda}
	by using $(\psi_0 - \psi)^+ = 0$.
\end{proof}
Let us now address the differentiability of $S_\rho$.
\begin{lem}\label{lem:boundOnDerivative}
{Let \cref{ass:for_Bsw,ass:1} hold.}
	Given $f_\rho, d \in H^{-1}(\Omega)$, the directional derivative $\alpha_\rho = S_\rho'(f_\rho)(d)$
	exists and
	is the unique solution of the PDE 
\eqref{eq:pdeForDerivativeStart}, i.e.,
\[-\Delta\alpha_\rho + \Lambda_\rho'(u_\rho-\psi)(\alpha_\rho) = d\]
 and satisfies the bound
\[\norm{\alpha_\rho}{H^1_0(\Omega)} \leq  \norm{d}{H^{-1}(\Omega)}.\]
\end{lem}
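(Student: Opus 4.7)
The plan is to first establish well-posedness of the linearised PDE \eqref{eq:pdeForDerivativeStart} and derive the stated a priori bound, and then to verify directional differentiability by identifying $\alpha_\rho$ as the limit of the difference quotients $\alpha_\rho^t := (S_\rho(f_\rho+td)-S_\rho(f_\rho))/t$ as $t \to 0^+$. For the first task, the operator $T_\rho \colon \alpha \mapsto -\Delta \alpha + \Lambda_\rho'(u_\rho-\psi)(\alpha)$ maps $H^1_0(\Omega)$ continuously into $H^{-1}(\Omega)$ thanks to \eqref{ass:lambda_rho_deriv_bdd}, and is monotone since the perturbation is pointwise monotone in $\alpha$ (linear with nonnegative coefficient in the smooth case, and using monotonicity of $h \mapsto h^+$ together with \eqref{eq:formula_for_dir_der} in the nonsmooth case); it therefore inherits strong monotonicity from $-\Delta$, and Browder--Minty gives a unique solution $\alpha_\rho$. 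Testing \eqref{eq:pdeForDerivativeStart} with $\alpha_\rho$ and dropping the nonnegative contribution $\int_\Omega \Lambda_\rho'(u_\rho-\psi)(\alpha_\rho)\alpha_\rho \d x \geq 0$ (from \eqref{ass:lambdaAtK0Positive}) then yields the norm bound via Cauchy--Schwarz.

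Next, I would use the Lipschitz continuity of $S_\rho$ from \cref{lem:existence_lipschitz_urho} to obtain $\norm{\alpha_\rho^t}{H^1_0(\Omega)} \leq \norm{d}{H^{-1}(\Omega)}$ uniformly in $t$, so that along a subsequence $\alpha_\rho^t \weaklyto \alpha^*$ in $H^1_0(\Omega)$ and $\alpha_\rho^t \to \alpha^*$ strongly in $L^2(\Omega)$ and a.e.\ by Rellich--Kondrachov. Subtracting the PDEs for $u_\rho^t := S_\rho(f_\rho+td)$ and $u_\rho$ and dividing by $t$ shows that $\alpha_\rho^t$ satisfies
\begin{equation*}
-\Delta \alpha_\rho^t + \frac{\Lambda_\rho(u_\rho^t-\psi) - \Lambda_\rho(u_\rho-\psi)}{t} = d.
\end{equation*}
The main technical step is passing to the limit in the nonlinear term, which I would handle via the splitting
\begin{align*}
\frac{\lambda_\rho(x,u_\rho+t\alpha_\rho^t-\psi)-\lambda_\rho(x,u_\rho-\psi)}{t}
&= \frac{\lambda_\rho(x,u_\rho+t\alpha^*-\psi)-\lambda_\rho(x,u_\rho-\psi)}{t} \\
&\quad + \frac{\lambda_\rho(x,u_\rho+t\alpha_\rho^t-\psi)-\lambda_\rho(x,u_\rho+t\alpha^*-\psi)}{t}.
\end{align*}
The first summand converges pointwise a.e.\ to $\lambda_\rho'(x,u_\rho-\psi)(\alpha^*)$ by the very definition of the directional derivative and is dominated in $L^2(\Omega)$ by $(1/\rho)|\alpha^*|$ via \eqref{eq:lambda_is_lipschitz}, so dominated convergence gives $L^2$-convergence; the second summand is pointwise dominated by $(1/\rho)|\alpha_\rho^t-\alpha^*|$ via \eqref{eq:lambda_is_lipschitz} and thus vanishes in $L^2(\Omega)$. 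Passing to the limit in $H^{-1}(\Omega)$ shows that $\alpha^*$ solves \eqref{eq:pdeForDerivativeStart}, so $\alpha^* = \alpha_\rho$ by uniqueness, and since every subsequence has the same weak limit, the full family $\alpha_\rho^t$ converges weakly.

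The hard part is indeed this limit passage: the nonlinearity of the directional derivative $h \mapsto \Lambda_\rho'(u_\rho-\psi)(h)$ in the nonsmooth case rules out a purely weak-convergence argument, and the splitting combined with the Lipschitz bound \eqref{eq:lambda_is_lipschitz} and Rellich compactness is the natural workaround. To upgrade to strong convergence in $H^1_0(\Omega)$ and thereby conclude directional differentiability in the norm sense, I would subtract the linearised equation from the PDE for $\alpha_\rho^t$ and test with $\alpha_\rho^t - \alpha_\rho$, obtaining
\begin{equation*}
\norm{\alpha_\rho^t - \alpha_\rho}{H^1_0(\Omega)}^2 = -\int_\Omega \left[\frac{\Lambda_\rho(u_\rho^t-\psi)-\Lambda_\rho(u_\rho-\psi)}{t} - \Lambda_\rho'(u_\rho-\psi)(\alpha_\rho)\right](\alpha_\rho^t-\alpha_\rho) \d x,
\end{equation*}
whose right-hand side is bounded by the product of $L^2$-norms via Cauchy--Schwarz, and both factors tend to zero by the convergence established above.
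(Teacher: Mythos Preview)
Your proposal is correct. The a priori bound and uniqueness arguments match the paper's essentially line-for-line: test \eqref{eq:pdeForDerivativeStart} with $\alpha_\rho$, drop the nonnegative nonlinear term, and invoke monotonicity of $\Lambda_\rho'(u_\rho-\psi)$ for uniqueness.

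The difference lies in how directional differentiability is obtained. The paper does not prove this from scratch; it simply cites \cite[Lemma~6.1]{AHRW} or \cite[Theorem~2.2]{CCMW}, which supply a general differentiability result for semilinear monotone equations. You instead give a self-contained argument: bound the difference quotients via the Lipschitz constant of $S_\rho$, extract a weak $H_0^1$ (hence strong $L^2$) limit, pass to the limit in the nonlinear term via the Lipschitz splitting plus dominated convergence, identify the limit by uniqueness, and finally upgrade to norm convergence by an energy argument. This buys independence from the cited references and makes the mechanism transparent, at the cost of a longer proof. Both routes are standard; yours is what the cited lemmas do under the hood.

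One minor remark on your final step: when you say ``both factors tend to zero'', note that strong $L^2$-convergence of $\alpha_\rho^t - \alpha_\rho$ already suffices, since the bracketed term is uniformly bounded in $L^2(\Omega)$ by $(1/\rho)(\norm{\alpha_\rho^t}{L^2(\Omega)} + \norm{\alpha_\rho}{L^2(\Omega)})$ via \eqref{eq:lambda_is_lipschitz} and \eqref{ass:lambda_rho_deriv_bdd}. You do not need both factors to vanish.
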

\begin{proof}
The directional differentiability and
satisfaction of \eqref{eq:pdeForDerivativeStart} follows by the directional differentiability and monotonicity of $\Lambda_\rho$, see e.g. \cite[Lemma 6.1]{AHRW} or \cite[Theorem 2.2]{CCMW}.

For the estimate, we
test \eqref{eq:pdeForDerivativeStart} with $\alpha_\rho$  
and obtain
\[
\norm{\alpha_\rho}{H^1_0(\Omega)}^2 + \int_\Omega \lambda_\rho'(\cdot, u_\rho-\psi)(\alpha_\rho)\alpha_\rho \d x
= \langle d, \alpha_\rho\rangle
.
\]
Using the nonnegativity by \eqref{ass:lambdaAtK0Positive} we get the result.

Uniqueness follows by the monotonicity of $\Lambda_\rho'(u_\rho-\psi)\colon H^1_0(\Omega) \to H^{-1}(\Omega)$, which is a consequence of the fact that the derivative of a monotone map at a particular point is also monotone {(see e.g. \cite[Footnote 5]{AHRW})}.
\end{proof}

In what follows,
we show that $\Lambda_\rho'(u_\rho-\psi)$
can be represented by a function in $L^\infty(\Omega)$
under the assumption that the directional derivative
$S_\rho'(f_\rho)$
is linear. {The said linearity holds in the smooth case since then $\Lambda_\rho'(u_\rho-\psi)$ is linear and $S_\rho'(f_\rho)$ is the solution mapping of a linear PDE (see \cref{lem:boundOnDerivative}). In the nonsmooth case, linearity is attained at G\^ateaux points of $S_\rho$; further details will be given in \cref{sec:existence_limiting}.}

\begin{lem}\label{lem:linearity_etc_of_murho}
{Let \cref{ass:for_Bsw,ass:1} hold.}
If $f_\rho \in H^{-1}(\Omega)$ is given such that
$S_\rho'(f_\rho)$ is linear, i.e., $S_\rho'(f_\rho) \in \mathcal{L}(H^{-1}(\Omega),H^1_0(\Omega))$,
then
for a.a.\ $x \in \Omega$,
$\lambda_\rho'(x, \cdot)$ is differentiable at the point $S_\rho(f_\rho)(x) -\psi(x)$
and the derivative belongs to $[0, 1/\rho]$.
Consequently, the operator
$\Lambda_\rho'(S_\rho(f_\rho) -\psi) \colon L^2(\Omega) \to L^2(\Omega)$
is linear and can be identified with a function in $L^\infty(\Omega)$.
\end{lem}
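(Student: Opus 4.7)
The plan is to handle the smooth and nonsmooth cases from \cref{ass:1} \ref{item:iv} separately. In the smooth case the statement is almost automatic: since $\lambda_\rho(x,\cdot) \in C^1(\R)$ for a.a.\ $x$, the directional derivative $\lambda_\rho'(x,r)(h)$ is already multiplication by a scalar, which by \eqref{eq:lambdaRhoDerivIncIfC1} lies in $[0,1/\rho]$. Hence $\Lambda_\rho'(S_\rho(f_\rho)-\psi)$ is multiplication by an $L^\infty(\Omega)$ function with values in $[0,1/\rho]$, and nothing else is required.

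The real work is in the nonsmooth case ($\kzero_\rho \equiv \kone_\rho$), where I aim to show that the set $A := \{x \in \Omega : S_\rho(f_\rho)(x) - \psi(x) = \kzero_\rho(x)\}$, on which $\lambda_\rho'(x,\cdot)$ is only the positively homogeneous map $h \mapsto h^+/\rho$, has Lebesgue measure zero. My first step is to upgrade the hypothesis: if $S_\rho'(f_\rho)$ is linear, then so is $\Lambda_\rho'(S_\rho(f_\rho)-\psi) \colon H^1_0(\Omega) \to H^{-1}(\Omega)$. Indeed, by \cref{lem:boundOnDerivative} the map $d \mapsto S_\rho'(f_\rho)(d)$ is a bijection $H^{-1}(\Omega) \to H^1_0(\Omega)$, with surjectivity coming from $d := -\Delta\alpha + \Lambda_\rho'(S_\rho(f_\rho)-\psi)(\alpha) \in H^{-1}(\Omega)$ for any prescribed $\alpha$, and injectivity from the uniqueness statement. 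A linear bijection has a linear inverse, and since $-\Delta$ is linear, so is $\Lambda_\rho'(S_\rho(f_\rho)-\psi)$.

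Next, I would translate this operator-level linearity into pointwise oddness of $\lambda_\rho'(x, S_\rho(f_\rho)(x)-\psi(x))(\cdot)$. For any fixed $\alpha \in H^1_0(\Omega)$, linearity gives $\Lambda_\rho'(S_\rho(f_\rho)-\psi)(-\alpha) = -\Lambda_\rho'(S_\rho(f_\rho)-\psi)(\alpha)$, which via \eqref{eq:Lambda_and_lambda} and \eqref{eq:formula_for_dir_der} becomes, restricted to $A$, the pointwise a.e.\ identity $(-\alpha(x))^+ = -\alpha(x)^+$. Since both sides are of opposite sign unless $\alpha(x) = 0$, this forces $\alpha = 0$ a.e.\ on $A$ for \emph{every} $\alpha \in H^1_0(\Omega)$. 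Assuming toward a contradiction that $|A|>0$, inner regularity of Lebesgue measure produces an open set $U \subset \Omega$ with $|A \cap U| > 0$; a bump function $\alpha \in C_c^\infty(\Omega)$ strictly positive on $U$ then contradicts the previous conclusion, yielding $|A|=0$.

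With $|A|=0$, formula \eqref{eq:formula_for_dir_der} shows that for a.a.\ $x$ the directional derivative $\lambda_\rho'(x, S_\rho(f_\rho)(x)-\psi(x))(\cdot)$ is multiplication by either $0$ or $1/\rho$, which gives both the asserted (Gâteaux) differentiability at that point and the bound $[0,1/\rho]$. Identifying $\Lambda_\rho'(S_\rho(f_\rho)-\psi)$ with this pointwise multiplier immediately realises it as an element of $L^\infty(\Omega)$. The main obstacle is the passage from operator-level linearity on $H^1_0(\Omega) \to H^{-1}(\Omega)$ to pointwise information on $A$; this is where the flexibility of $H^1_0(\Omega)$-test functions, together with inner regularity, does the heavy lifting.
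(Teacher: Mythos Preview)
Your approach is essentially the same as the paper's: both argue that $S_\rho'(f_\rho)$ is the inverse of $-\Delta + \Lambda_\rho'(S_\rho(f_\rho)-\psi)$, so linearity of the former forces linearity of the latter, and then linearity of the Nemytskii operator is pushed down to pointwise differentiability via \eqref{eq:Lambda_and_lambda}. The paper compresses this last step into one sentence, whereas you spell it out in the nonsmooth case by showing $|A|=0$; your version is correct and in fact fills in a detail the paper leaves implicit.

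One small wording issue: invoking ``inner regularity'' to produce an open $U \subset \Omega$ with $|A\cap U|>0$ is off --- inner regularity yields compact subsets, and in any case $U=\Omega$ already satisfies your requirement trivially. What you actually need (so that a $C_c^\infty(\Omega)$ bump function exists) is an open $U$ compactly contained in $\Omega$ with $|A\cap U|>0$, which follows simply by exhausting $\Omega$ with such sets. Alternatively, skip the bump entirely and take $\alpha := (-\Delta)^{-1} 1 \in H_0^1(\Omega)$, which is strictly positive a.e.\ on $\Omega$ and gives the contradiction in one line.
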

\begin{proof}
	The operator
	\begin{equation*}
		S_\rho'(f_\rho) \colon H^{-1}(\Omega) \to H_0^1(\Omega)
	\end{equation*}
	is the inverse of
	\begin{equation*}
		-\Delta + \Lambda_\rho'(S_\rho(f_\rho) - \psi) \colon H_0^1(\Omega) \to H^{-1}(\Omega)
		.
	\end{equation*}
	Since the former operator is linear by assumption,
	the latter operator is linear as well.
	Consequently,
	$\Lambda_\rho'(S_\rho(f_\rho) -\psi)$
	is a linear operator.
	From \eqref{eq:Lambda_and_lambda}
	we infer the differentiability of $\lambda_\rho'(x, \cdot)$
	at $S_\rho(f_\rho)(x) -\psi(x)$
	and
	then \eqref{eq:formula_for_dir_der}, \eqref{eq:lambdaRhoDerivIncIfC1}
	imply $\lambda_\rho'(\cdot, S_\rho(f_\rho)-\psi) \in L^\infty(\Omega)$.
\end{proof}

In this linear case, we can define a measure $\mu_\rho$
via
\begin{equation}
	\label{eq:defn_mu_rho_intro}
	\mu_\rho(B)
	:=
	\int_B \Lambda_\rho'(S_\rho(f_\rho) - \psi) \d x
\end{equation}
and \eqref{eq:pdeForDerivativeStart}
becomes
\begin{equation}
	\label{eq:pdeForDerivative}
	-\Delta \alpha_\rho + \alpha_\rho \mu_\rho = d
	,
\end{equation}
with an appropriate interpretation of the term $\alpha_\rho \mu_\rho$.
We will use the notion of capacitary measures
to pass to the limit in this equation,
see \cref{sec:char_limits}.

\subsection{Convergence results}
We now consider the limit $\rho \to 0$.
The convergence result
\[u_\rho \to u := S(f)\qquad\text{in } H_0^1(\Omega)\]
has been shown for various choices of the penalty function
in, e.g., \cite[Lemma 3.3 and Lemma 3.5]{AHRW},
\cite[Theorem 3.1]{IK}, \cite[Theorem 4.1]{IKOCVIs}, \cite[Theorem 2.1]{IKH1} and \cite[Theorem~2.10]{SchielaWachsmuth}.
We demonstrate that this convergence
already holds under our general setting. In particular, our results are more general than currently available in the literature and are phrased in the natural function spaces for the problem (e.g., the source terms are only assumed to converge in the dual space).
We begin with a preparatory lemma.
\begin{lem}\label{lem:SW_estimate}
{Let \cref{ass:for_Bsw,ass:1} hold.} 
    For all $f_\rho \in H^{-1}(\Omega)$
	and $\rho \in (0,1]$,
	we have
\begin{align}
		\norm{(S_\rho(f_\rho)-\psi)^+}{L^2(\Omega)}
		&\le
		\frac{\sqrt{\rho}}{2}
		\left(\norm{f_\rho + \Delta\psi}{H^{-1}(\Omega)}+C_P C\right),
		\label{eq:limit_gives_feasibility_new}\\
		\norm{\Lambda_\rho(S_\rho(f_\rho) -\psi)}{L^2(\Omega)}
		&\leq
		\frac{1}{2\sqrt{\rho}}
		\left(\norm{f_\rho + \Delta\psi}{H^{-1}(\Omega)}+C_P C\right) + C,
		\label{eq:uniform_bound_Lambda_rho_new}
\end{align}
where  $C$ is the constant from \eqref{ass:kgrowth}.
\end{lem}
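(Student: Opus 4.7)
My approach is to test the penalised PDE $-\Delta u_\rho + \Lambda_\rho(u_\rho - \psi) = f_\rho$, where $u_\rho := S_\rho(f_\rho)$, against the positive part $\phi := (u_\rho - \psi)^+$. First I have to verify that $\phi \in H_0^1(\Omega)$: writing $\psi^- := -\min(0,\psi) \in H_0^1(\Omega)$ by our assumption on $\psi$, we have $u_\rho + \psi^- \in H_0^1(\Omega)$ and $u_\rho - \psi = (u_\rho + \psi^-) - \psi^+$, whose boundary trace is $-\psi^+|_{\partial\Omega} \le 0$; the standard characterisation of $H_0^1(\Omega)$ via nonpositive traces then yields $\phi \in H_0^1(\Omega)$. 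Note that $\Lambda_\rho(u_\rho - \psi) \in L^2(\Omega)$ by \eqref{eq:NiceEstimateForLambda}, so all subsequent integrals are well-defined.

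Using $\nabla \phi = \chi_{\{u_\rho > \psi\}}\nabla(u_\rho - \psi)$ (Stampacchia), the Dirichlet term splits as
\[\langle -\Delta u_\rho, \phi\rangle = \int_\Omega \nabla(u_\rho - \psi)\cdot\nabla\phi \,\d x + \int_\Omega \nabla\psi \cdot \nabla\phi\,\d x = \norm{\phi}{H_0^1(\Omega)}^2 - \langle \Delta\psi, \phi\rangle,\]
so testing the PDE delivers the energy identity
\[\norm{\phi}{H_0^1(\Omega)}^2 + \int_\Omega \Lambda_\rho(u_\rho - \psi)\,\phi\,\d x = \langle f_\rho + \Delta\psi, \phi\rangle.\]
For the penalty integral I apply the pointwise lower bound \eqref{eq:new_consequence} to obtain $\frac{1}{\rho}\norm{\phi}{L^2(\Omega)}^2 - \frac{1}{\rho}\int_\Omega K_\rho \phi\,\d x$, and the spurious $K_\rho$-term is controlled by Cauchy--Schwarz together with \eqref{ass:kgrowth} and Poincaré: $\frac{1}{\rho}\int_\Omega K_\rho\phi\,\d x \le \frac{1}{\rho}\norm{K_\rho}{L^2(\Omega)}\norm{\phi}{L^2(\Omega)} \le C_P C\,\norm{\phi}{H_0^1(\Omega)}$.

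Setting $A := \norm{f_\rho + \Delta\psi}{H^{-1}(\Omega)} + C_P C$, these estimates combine to
\[\norm{\phi}{H_0^1(\Omega)}^2 + \frac{1}{\rho}\norm{\phi}{L^2(\Omega)}^2 \le A\,\norm{\phi}{H_0^1(\Omega)}.\]
Young's inequality $A\norm{\phi}{H_0^1(\Omega)} \le \tfrac{A^2}{4} + \norm{\phi}{H_0^1(\Omega)}^2$ cancels the gradient term on the right and leaves $\norm{\phi}{L^2(\Omega)}^2 \le \rho A^2/4$, which is exactly \eqref{eq:limit_gives_feasibility_new}. The second estimate \eqref{eq:uniform_bound_Lambda_rho_new} then follows instantly by substituting this bound into \eqref{eq:NiceEstimateForLambda} with $v := u_\rho$. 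The main technical delicacy is the admissibility of $(u_\rho - \psi)^+$ as a test function; after that, the proof is essentially a completed square, and the factor $\sqrt{\rho}$ emerges naturally from the $\frac{1}{\rho}$ coefficient coming out of \eqref{eq:new_consequence}.
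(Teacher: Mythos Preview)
Your proof is correct and follows essentially the same approach as the paper's: test the penalised equation with $(u_\rho-\psi)^+$, use \eqref{eq:new_consequence} for the penalty term, and finish with Young's inequality and \eqref{eq:NiceEstimateForLambda}. The only cosmetic difference is that the paper groups $K_\rho/\rho$ with $f_\rho+\Delta\psi$ inside the $H^{-1}$ norm before applying Poincaré, whereas you bound the $K_\rho$ term separately; also, your justification that $(u_\rho-\psi)^+\in H_0^1(\Omega)$ via ``nonpositive trace'' should, for the general bounded open sets allowed here, be phrased via the order characterisation $0\le (u_\rho-\psi)^+\le (u_\rho+\psi^-)^+\in H_0^1(\Omega)$ rather than traces.
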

\begin{proof}
Denoting $u_\rho := S_\rho(f_\rho)$, we test the equation
\[-\Delta(u_\rho-\psi) + \Lambda_\rho(u_\rho-\psi) = f_\rho+\Delta\psi\]
with $(u_\rho-\psi)^+ \in H^1_0(\Omega)$. Using the fact that for $z \in H^1_0(\Omega)$, we have $\langle -\Delta z, z^+ \rangle = \int_\Omega \grad z \cdot \grad z^+ = \int_\Omega (\grad z^+ - \grad z^-)\cdot \grad z^+ = \int_\Omega |\grad z^+|^2$ (see e.g. \cite[Theorem A.1, Appendix A, \S II]{KinderlehrerStampacchia1980}), one has 
\begin{align*}
\langle -\Delta(u_\rho-\psi), (u_\rho-\psi)^+ \rangle = \norm{(u_\rho-\psi)^+}{H^1_0(\Omega)}^2,
\end{align*}
and using  \eqref{eq:new_consequence} we obtain
\begin{align*}
\int_\Omega \Lambda_\rho(u_\rho-\psi) (u_\rho-\psi)^+ \d x
\geq
\frac{1}{\rho}\norm{(u_\rho-\psi)^+}{L^2(\Omega)}^2 - \frac{1}{\rho}\int_\Omega K_\rho(u_\rho-\psi)^+ \d x
,
\end{align*}
which  leads to
	\begin{align}
		\MoveEqLeft
		\norm{(u_\rho-\psi)^+}{H^1_0(\Omega)}^2 + \frac{1}{\rho}\norm{(u_\rho-\psi)^+}{L^2(\Omega)}^2
		\leq \dual[\Big]{ f_\rho+\Delta\psi+\frac{K_\rho}{\rho} }{ (u_\rho-\psi)^+ }\label{eq:first_inequality}\\
		\nonumber &\leq \frac14 \norm{f_\rho + \Delta\psi+\frac{K_\rho}{\rho}}{H^{-1}(\Omega)}^2 + \norm{(u_\rho-\psi)^+}{H_0^1(\Omega)}^2
		,
	\end{align}
	where we used Young's inequality.
	This yields
	\begin{equation*}
		\norm{(u_\rho-\psi)^+}{L^2(\Omega)}
		\le
		\frac{\sqrt{\rho}}{2}
		\norm{f_\rho + \Delta\psi+\frac{K_\rho}{\rho}}{H^{-1}(\Omega)}
	\end{equation*}
	and \eqref{eq:limit_gives_feasibility_new} follows from Poincaré's inequality.
	Inequality \eqref{eq:uniform_bound_Lambda_rho_new} follows with \eqref{eq:NiceEstimateForLambda}.
\end{proof}

\begin{remark}
By arguing as in the proof of \cite[Lemma 2.3]{SchielaWachsmuth} we can obtain a better rate than above if we have additional regularity. Indeed, we can show that
if $f_\rho+\Delta\psi+ K_\rho\slash \rho \in L^2(\Omega)$ and $\rho \in (0,1]$, we have that
\begin{align}
\norm{(S_\rho(f_\rho)-\psi)^+}{L^2(\Omega)}
&\leq \rho\norm{f_\rho+\Delta\psi+\frac{K_\rho}{\rho}}{L^2(\Omega)}\label{eq:limit_gives_feasibility}\\
\norm{\Lambda_\rho(S_\rho(f_\rho) -\psi)}{L^2(\Omega)} &\leq   \norm{f_\rho+\Delta\psi+\frac{K_\rho}{\rho}}{L^2(\Omega)} + C\label{eq:uniform_bound_Lambda_rho}
\end{align}
where $C$ is the constant from \eqref{ass:kgrowth}. To see this, we can adapt \eqref{eq:first_inequality} as follows, making use of the assumed $L^2$ regularity:
\begin{align*}
\MoveEqLeft
\norm{(u_\rho-\psi)^+}{H^1_0(\Omega)}^2 + \frac{1}{\rho}\norm{(u_\rho-\psi)^+}{L^2(\Omega)}^2
\leq \dual[\Big]{ f_\rho+\Delta\psi+\frac{K_\rho}{\rho}}{ (u_\rho-\psi)^+ }\\
&\leq \norm{f_\rho+\Delta\psi+\frac{K_\rho}{\rho}}{L^2(\Omega)}\norm{(u_\rho-\psi)^+}{L^2(\Omega)},
\end{align*}
whence \eqref{eq:limit_gives_feasibility}.  The second estimate \eqref{eq:uniform_bound_Lambda_rho} follows as before.
\end{remark}

\begin{prop}\label{prop:convergence_to_VI_soln}
{Let \cref{ass:for_Bsw,ass:1} hold.}
Let $f_\rho \to f$ in $H^{-1}(\Omega)$.
We have $u_\rho = S_\rho(f_\rho) \to S(f)=:u$ in $H^1_0(\Omega)$, i.e., the limit $u$ solves the VI \eqref{eq:VI}.
\end{prop}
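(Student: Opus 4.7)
The plan is to extract a weak subsequential limit of $\{u_\rho\}$ in $H^1_0(\Omega)$ and pass to the limit in the penalised equation using monotonicity together with the growth estimates on $\kzero_\rho$ and $\Lambda_\rho$. First, by \cref{lem:existence_lipschitz_urho}, the convergence $f_\rho \to f$ in $H^{-1}(\Omega)$ makes $\{u_\rho\}$ uniformly bounded in $H^1_0(\Omega)$ for small $\rho$, so along a subsequence $u_\rho \weaklyto \tilde u$ in $H^1_0(\Omega)$ with $u_\rho \to \tilde u$ in $L^2(\Omega)$ by the Rellich--Kondrachov theorem. The bound \eqref{eq:limit_gives_feasibility_new} in \cref{lem:SW_estimate} shows $(u_\rho-\psi)^+ \to 0$ in $L^2(\Omega)$, hence $\tilde u \le \psi$, i.e.\ $\tilde u$ is admissible for the VI.

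For any admissible test function $v \in H_0^1(\Omega)$ with $v \le \psi$, I would test \eqref{eq:pde} with $u_\rho - v$ and decompose the penalty term as
\begin{equation*}
\int_\Omega \Lambda_\rho(u_\rho-\psi)(u_\rho-v)\,\mathrm{d}x = \int_\Omega \Lambda_\rho(u_\rho-\psi)(u_\rho-\psi)\,\mathrm{d}x + \int_\Omega \Lambda_\rho(u_\rho-\psi)(\psi-v)\,\mathrm{d}x.
\end{equation*}
The second integrand is nonnegative since $\Lambda_\rho \ge 0$ and $\psi - v \ge 0$. In the first integral, the contribution from $(u_\rho-\psi)^+$ is nonnegative and can be discarded, while the contribution from $(u_\rho-\psi)^-$ is supported on the set $\{\kzero_\rho < u_\rho-\psi < 0\}$ (which forces $\kzero_\rho < 0$ there); on this set the Lipschitz bound \eqref{eq:lambda_is_lipschitz} combined with $\lambda_\rho(\cdot,\kzero_\rho)=0$ gives $\lambda_\rho(\cdot,u_\rho-\psi)(u_\rho-\psi)^- \le |\kzero_\rho|^2/\rho$ pointwise, so by \eqref{ass:kgrowth} the integral is $O(\rho)$. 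This yields
\begin{equation*}
\langle -\Delta u_\rho, u_\rho - v\rangle \le \langle f_\rho, u_\rho - v\rangle + O(\rho).
\end{equation*}

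Passing to the limit with weak lower semicontinuity of $\|\cdot\|_{H^1_0}^2$ on the left and strong$\times$weak duality on the right yields $\langle -\Delta \tilde u, \tilde u - v\rangle \le \langle f, \tilde u - v\rangle$ for all admissible $v$, i.e.\ $\tilde u$ solves \eqref{eq:VI} and so $\tilde u = u$ by uniqueness. A standard subsequence argument upgrades this to convergence of the full sequence. To upgrade from weak to strong convergence in $H^1_0(\Omega)$, I would specialise the previous displayed inequality to $v = u$, obtaining $\|u_\rho\|_{H^1_0}^2 \le \langle \nabla u_\rho, \nabla u\rangle_{L^2} + \langle f_\rho, u_\rho - u\rangle + O(\rho)$; the right-hand side converges to $\|u\|_{H^1_0}^2$, so $\limsup \|u_\rho\|_{H^1_0} \le \|u\|_{H^1_0}$, which combined with weak lower semicontinuity gives $\|u_\rho\|_{H^1_0} \to \|u\|_{H^1_0}$ and hence strong convergence.

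The main obstacle I anticipate is the lower estimate on $\int \Lambda_\rho(u_\rho-\psi)(u_\rho-\psi)^-\,\mathrm{d}x$ in the smooth case: $\lambda_\rho$ is genuinely nonzero on the narrow band $(\kzero_\rho,\kone_\rho]$ which may straddle zero, and the clean $O(\rho)$ bound uses simultaneously the $L^\infty$-smallness \eqref{ass:k0ConvergesToZeroLinf_purple} (to localise the problematic set) and the sharper $L^2$-rate \eqref{ass:kgrowth} (to absorb the $1/\rho$ from the Lipschitz constant).
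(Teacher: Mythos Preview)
Your argument is correct but organised differently from the paper's. The paper does not first identify the weak limit as a VI solution and then upgrade to strong convergence; instead it tests the PDE with $u_\rho - u$ (where $u := S(f)$ is the known VI solution) and simultaneously uses the weak limit $v$ as a test function in the VI for $u$, which after addition yields directly
\[
\norm{u - u_\rho}{H^1_0(\Omega)}^2 \le \dual{f - f_\rho}{u - u_\rho} + \dual{-\Delta u - f}{v - u_\rho} - \dual{\Lambda_\rho(u_\rho - \psi)}{u_\rho - u},
\]
so strong convergence and identification of the limit come in one step. For the penalty term the paper also proceeds differently: rather than decomposing $u_\rho - v = (u_\rho - \psi) + (\psi - v)$ and bounding the tail on $\{k_\rho < u_\rho - \psi < 0\}$ pointwise, it exploits monotonicity of $\Lambda_\rho$ against the shifted comparison point $u - \psi + k_\rho$ (on which $\Lambda_\rho$ vanishes since $u \le \psi$), reducing everything to $\dual{\Lambda_\rho(u_\rho - \psi)}{k_\rho}$, which vanishes via \eqref{eq:uniform_bound_Lambda_rho_new} and \eqref{ass:kgrowth}. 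Your route is a bit more elementary and in fact slightly more economical in its hypotheses: your tail bound $\int \lambda_\rho(\cdot,u_\rho-\psi)(u_\rho-\psi)^- \le \rho^{-1}\norm{k_\rho}{L^2}^2 = O(\rho)$ uses only \eqref{ass:kgrowth}, and the $L^\infty$ smallness \eqref{ass:k0ConvergesToZeroLinf_purple} you flag as a potential obstacle is not actually needed---your own pointwise analysis already localises to $\{k_\rho < u_\rho - \psi < 0\}$ without it.
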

\begin{proof}
The estimate in \cref{lem:existence_lipschitz_urho} implies that $\{u_\rho\}$ is uniformly bounded in $H^1_0(\Omega)$,
hence there exists a $v \in H^1_0(\Omega)$ such that for a subsequence (that we relabel), we have
\[u_\rho \weaklyto  v \quad\text{in $H^1_0(\Omega)$}.\]
In what follows, we argue that the convergence is strong and that the limit $v$ equals $u$.
Hence, the entire sequence converges towards $u := S(f)$ in $H_0^1(\Omega)$.

We first note that \eqref{eq:limit_gives_feasibility_new}
implies the feasibility of $v$, i.e., $v \le \psi$.
Consequently,
we can use $v$ in the VI \eqref{eq:VI}
and test
the PDE for $u_\rho$ with $u_\rho-u$.
Adding the resulting expressions
gives
\begin{align*}
\langle -\Delta u_\rho, u_\rho - u \rangle \leq \langle f_\rho, u_\rho - u \rangle + \langle -\Delta u -f, v - u \rangle - \langle \Lambda_\rho(u_\rho - \psi), u_\rho - u \rangle. 
\end{align*}
Subtracting $\langle -\Delta u, u_\rho -u \rangle$ from both sides and adding and subtracting $\langle f, u_\rho - u \rangle$ on the right-hand side,  we end up with
\begin{equation*}
	\norm{u - u_\rho}{H_0^1(\Omega)}^2
	\leq
	\dual{f - f_\rho}{u - u_\rho}
	+
	\dual{-\Delta u - f}{v - u_\rho}
	-
	\dual{\Lambda_\rho(u_\rho - \psi)}{u_\rho - u}
	.
\end{equation*}
It remains to check that the last addend on the right-hand side converges to $0$.
From $v-\psi + \kzero_\rho \leq \kzero_\rho$
we get $\Lambda_\rho(v-\psi + \kzero_\rho) = 0$,
see \eqref{eq:structure_lambda}.
Together with the monotonicity of $\Lambda_\rho$,
we obtain
\begin{align*}
	\dual{\Lambda_\rho(u_\rho-\psi) }{ u_\rho-v }
	&=
	\dual{ \Lambda_\rho(u_\rho-\psi) - \Lambda_\rho(v-\psi+\kzero_\rho) }{ u_\rho - (v + \kzero_\rho) + \kzero_\rho }
	\\
	&\geq
	\dual{\Lambda_\rho(u_\rho-\psi) }{ \kzero_\rho }
	.
\end{align*}
Combining \eqref{eq:uniform_bound_Lambda_rho_new} with \eqref{ass:kgrowth}
yields that the right-hand side converges to $0$.
This can be used in the above estimate
and the claim follows.
\end{proof}

Now, we can revisit the estimate in \cref{lem:SW_estimate} and improve the rate implied by \eqref{eq:limit_gives_feasibility_new}. This will be crucial later on.
\begin{lem}\label{lem:improved_rate}
{Let \cref{ass:for_Bsw,ass:1} hold.}
Let $f_\rho \to f$ in $H^{-1}(\Omega)$. Then,
\[\rho^{-1/2} \norm{ (S_\rho(f_\rho) - \psi)^+ }{L^2(\Omega)} \to 0.\] 
\end{lem}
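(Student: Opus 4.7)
The plan is to revisit the inequality derived in the proof of \cref{lem:SW_estimate} and upgrade the boundedness estimate into a convergence statement by exploiting the strong convergence $u_\rho \to u$ in $H^1_0(\Omega)$ furnished by \cref{prop:convergence_to_VI_soln}. Specifically, starting from the key intermediate inequality
\begin{equation*}
    \frac{1}{\rho}\norm{(u_\rho-\psi)^+}{L^2(\Omega)}^2
    \leq \dual[\Big]{ f_\rho+\Delta\psi+\frac{K_\rho}{\rho}}{ (u_\rho-\psi)^+ }
\end{equation*}
(which is the left-hand inequality in \eqref{eq:first_inequality}, dropping the nonnegative $H^1_0$ term), the entire task becomes showing that the right-hand duality pairing tends to $0$ as $\rho \to 0$.

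The first step is to observe that the functionals $f_\rho+\Delta\psi + K_\rho/\rho$ are \emph{uniformly bounded} in $H^{-1}(\Omega)$: $f_\rho + \Delta\psi \to f+\Delta\psi$ in $H^{-1}(\Omega)$ by hypothesis, and $K_\rho/\rho$ is bounded in $L^2(\Omega) \cts H^{-1}(\Omega)$ thanks to \eqref{ass:kgrowth}. The second and crucial step is to establish the strong convergence
\[
    (u_\rho-\psi)^+ \to (u-\psi)^+ = 0 \quad\text{in } H^1_0(\Omega).
\]
This follows from \cref{prop:convergence_to_VI_soln}, which gives $u_\rho \to u$ strongly in $H^1_0(\Omega)$, combined with the continuity of the positive part map $v \mapsto v^+$ on $H^1(\Omega)$ (Stampacchia's theorem); membership in $H^1_0(\Omega)$ of $(u_\rho-\psi)^+$ is ensured by the assumption $\psi|_{\partial\Omega}\ge 0$, since then $u_\rho - \psi \le 0$ on $\partial\Omega$ in the trace sense. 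Finally, since $u \le \psi$ (feasibility of the VI solution), the limit is zero.

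Combining these two steps, the duality pairing on the right-hand side is the product of a bounded sequence in $H^{-1}(\Omega)$ and a sequence converging strongly to $0$ in $H^1_0(\Omega)$, and so tends to $0$. Taking square roots then yields
\[
    \rho^{-1/2} \norm{(u_\rho-\psi)^+}{L^2(\Omega)} \to 0,
\]
as desired. There is no substantial obstacle here; the only mild subtlety is verifying that $(u_\rho-\psi)^+$ genuinely lies in $H^1_0(\Omega)$ and that the positive-part map preserves $H^1$ convergence, but both of these are standard consequences of the standing assumption on $\psi$ and Stampacchia-type results.
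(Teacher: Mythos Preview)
Your proposal is correct and follows essentially the same route as the paper: both start from the inequality \eqref{eq:first_inequality}, invoke \cref{prop:convergence_to_VI_soln} to get $(u_\rho-\psi)^+ \to 0$ in $H_0^1(\Omega)$, and observe that the functional $f_\rho+\Delta\psi+K_\rho/\rho$ stays bounded in $H^{-1}(\Omega)$ via \eqref{ass:kgrowth}, so the right-hand side tends to zero. You simply spell out in more detail what the paper compresses into a single sentence.
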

\begin{proof}
Thanks to \cref{prop:convergence_to_VI_soln}, we have $(u_\rho - \psi)^+ \to 0$   in $H_0^1(\Omega)$
with $u_\rho := S_\rho(f_\rho)$.
In the inequality \eqref{eq:first_inequality}, i.e.,
\[	\norm{(u_\rho-\psi)^+}{H^1_0(\Omega)}^2 + \frac{1}{\rho}\norm{(u_\rho-\psi)^+}{L^2(\Omega)}^2
\leq \dual[\Big]{ f_\rho +\Delta\psi+ \frac{K_\rho}{\rho}}{ (u_\rho-\psi)^+ },\]
the duality product on the right-hand side clearly goes to zero, see \eqref{ass:kgrowth}.
This shows the assertion.
\end{proof}

\section{Pointwise characterisation of the limit of the derivatives}\label{sec:fixed_d}

In this section,
we study the convergence of the (directional) derivatives
from \cref{lem:boundOnDerivative}
and provide properties for the limits.

\begin{prop}\label{prop:convergence_derivatives}
{Let \cref{ass:for_Bsw,ass:1} hold.}
There exists an $\alpha \in H^1_0(\Omega)$ such that, for a subsequence (that we relabel) the derivative
satisfies
\[ S_\rho'(f_\rho)(d) \weaklyto \alpha \quad\text{in $H^1_0(\Omega)$}.\]
\end{prop}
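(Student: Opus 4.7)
The plan is essentially to combine the uniform a priori bound with the weak compactness of bounded sets in reflexive Banach spaces, so this proposition reduces to a standard extraction argument.

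First, I would invoke \cref{lem:boundOnDerivative} to ensure that for every $\rho > 0$ the directional derivative $\alpha_\rho := S_\rho'(f_\rho)(d)$ exists, uniquely solves the linearised PDE \eqref{eq:pdeForDerivativeStart}, and satisfies the estimate
\[
\norm{\alpha_\rho}{H^1_0(\Omega)} \leq \norm{d}{H^{-1}(\Omega)}.
\]
Since $d \in H^{-1}(\Omega)$ is fixed, this bound is uniform in $\rho$, so the family $\{\alpha_\rho\}$ is contained in a bounded subset of $H^1_0(\Omega)$.

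Next, I would use that $H^1_0(\Omega)$, being a Hilbert space, is reflexive, so by the Eberlein–Šmulian theorem every bounded sequence admits a weakly convergent subsequence. Extracting such a subsequence (and relabelling) yields some $\alpha \in H^1_0(\Omega)$ with $\alpha_\rho \weaklyto \alpha$ in $H^1_0(\Omega)$, which is exactly the claim.

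There is really no obstacle here: the whole point of the careful test-function computation in \cref{lem:boundOnDerivative} (using the nonnegativity \eqref{ass:lambdaAtK0Positive}) was to produce a bound independent of $\rho$, and once that bound is in hand the existence of a weak limit point is automatic. The interesting work — identifying the limit $\alpha$ and relating it to elements of $\partial_B^{sw} S(f)$ via capacitary measures — is deferred to the subsequent sections, where one passes to the limit in \eqref{eq:pdeForDerivative} using the $\gamma$-convergence machinery announced in \cref{sec:char_limits}.
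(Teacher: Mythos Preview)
Your proposal is correct and matches the paper's own proof, which simply reads ``This follows directly from the bound in \cref{lem:boundOnDerivative}.'' You have merely spelled out the standard weak compactness step (reflexivity of $H^1_0(\Omega)$) that the paper leaves implicit.
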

\begin{proof}
	This follows directly from the bound in
	\cref{lem:boundOnDerivative}.
\end{proof}

We want to know what properties the limit $\alpha$  satisfies.

\subsection{A complementarity condition on \texorpdfstring{$\alpha$}{alpha}}
We start with a simple lemma.
\begin{lem}\label{lem:calcLemma}
If $\sigma\colon \mathbb{R} \to \mathbb{R}$ is $C^1$ and convex,
and satisfies $\sigma =0$ on $(-\infty, r_0]$, we have 
\[\sigma(r)  \leq (r-r_0)\sigma'(r) \quad\forall r \in \R.\]
\end{lem}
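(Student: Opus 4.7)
The inequality is a standard consequence of the supporting-hyperplane (subtangent) property of a $C^1$ convex function, together with the vanishing of $\sigma$ at $r_0$. The plan is to split into the two natural cases $r \le r_0$ and $r > r_0$ and handle them separately with one-line arguments.

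For $r \le r_0$, the assumption $\sigma \equiv 0$ on $(-\infty, r_0]$ together with the $C^1$ regularity forces $\sigma'(r) = 0$ on the interior of that interval; at the endpoint $r_0$ continuity of $\sigma'$ extends this, so $\sigma'(r_0) = 0$ as well. Hence both sides of the claimed inequality vanish in this range, and the inequality becomes the trivial $0 \le 0$.

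For $r > r_0$, the key step is the subtangent inequality for convex $C^1$ functions: for every $x, y \in \R$,
\[
\sigma(y) \ge \sigma(x) + \sigma'(x) \, (y - x).
\]
Applying this with $x = r$ and $y = r_0$ and using $\sigma(r_0) = 0$ gives
\[
0 \ge \sigma(r) + \sigma'(r)(r_0 - r),
\]
which rearranges exactly to $\sigma(r) \le (r - r_0)\sigma'(r)$, as desired. A completely equivalent alternative route, which I would mention as a sanity check, is to write $\sigma(r) = \int_{r_0}^{r} \sigma'(s) \d s$ and use that $\sigma'$ is nondecreasing by convexity, so that $\sigma'(s) \le \sigma'(r)$ for all $s \in [r_0, r]$; integrating yields the same bound.

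There is no genuine obstacle here. The only mild subtlety is making sure that the inequality is valid also on the "flat" region $r \le r_0$, where $\sigma'(r) = 0$ and one should not inadvertently divide by $r - r_0$; splitting into the two cases avoids this entirely. The proof is therefore a few lines long and relies on nothing beyond the definition of convexity of a $C^1$ function.
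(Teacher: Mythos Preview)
Your proof is correct and uses the same key idea as the paper: the subtangent inequality $\sigma(s) \ge \sigma(r) + \sigma'(r)(s-r)$ at $s = r_0$ together with $\sigma(r_0) = 0$. The paper simply applies this inequality for all $r$ at once, so your case split $r \le r_0$ versus $r > r_0$ is unnecessary (the subtangent inequality already yields $0 \le 0$ in the first case), but the argument is otherwise identical.
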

\begin{proof}
As $\sigma$ is a differentiable convex function, it satisfies $\sigma(r) \leq \sigma(s) + (r-s)\sigma'(r)$ for all $r,s \in \R$. The result follows from the fact that $\sigma(r_0)=0$.
\end{proof}

In the smooth setting where $\lambda_\rho(x,\cdot)$ is $C^1$, we can take in \cref{lem:calcLemma} $\sigma(\cdot) = \lambda_\rho(x, \cdot)$ for fixed $x$ and obtain 
\begin{equation}
\lambda_\rho(x, r) \leq (r-\kzero_\rho(x, \rho))\lambda_\rho'(x, r)
\quad \forall r \in [\kzero_\rho(x), \kone_\rho(x)].
\label{eq:calcLemmaEstimate}
\end{equation}
Note that this inequality also holds in the nonsmooth case,
since this implies $\kzero_\rho \equiv \kone_\rho$.
\begin{prop}\label{lem:orthogonality}
{Let \cref{ass:for_Bsw,ass:1} hold.}
Let $f_\rho \to f$ in $H^{-1}(\Omega)$.
With $\xi := f+\Delta u$ and $\alpha$ denoting the weak limit of (a subsequence of) $S_\rho'(f_\rho)(d)$ from \cref{prop:convergence_derivatives}, we have
\[\langle \xi, \alpha \rangle = \langle \xi, \alpha^+ \rangle = \langle \xi, \alpha^- \rangle = 0.\]
\end{prop}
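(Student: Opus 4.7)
My first step is to identify the quantity $\langle \xi, \alpha^+\rangle$ as a limit of concrete integrals. Testing \eqref{eq:pde} against the non-negative function $\alpha_\rho^+ \in H_0^1(\Omega)$ yields
\[
\langle \Lambda_\rho(u_\rho-\psi), \alpha_\rho^+ \rangle
= \langle f_\rho, \alpha_\rho^+\rangle - (u_\rho, \alpha_\rho^+)_{H_0^1(\Omega)}.
\]
I will check that $\alpha_\rho^+ \weaklyto \alpha^+$ in $H_0^1(\Omega)$ via Rellich applied to \cref{prop:convergence_derivatives} together with the bound $\norm{\alpha_\rho^+}{H_0^1(\Omega)} \le \norm{\alpha_\rho}{H_0^1(\Omega)}$ and a standard subsequence (Urysohn) argument. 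Combined with the strong convergences $u_\rho \to u$ from \cref{prop:convergence_to_VI_soln} and $f_\rho \to f$ in $H^{-1}(\Omega)$, passing to the limit gives
$\langle \Lambda_\rho(u_\rho-\psi), \alpha_\rho^+ \rangle \to \langle f+\Delta u, \alpha^+\rangle = \langle \xi, \alpha^+\rangle$.
Since $\Lambda_\rho(u_\rho-\psi) \ge 0$ and $\alpha_\rho^+ \ge 0$, each pairing is non-negative, so the real task is to show $\langle \Lambda_\rho(u_\rho-\psi), \alpha_\rho^+ \rangle \to 0$.

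For this upper bound, I invoke the calculus inequality $\lambda_\rho(x,r) \le (r-\kzero_\rho(x))\mu_\rho(x)$ derived from \cref{lem:calcLemma} (applied to $\sigma = \lambda_\rho(x,\cdot)$ and $r_0 = \kzero_\rho(x)$), where $\mu_\rho(x) := \lambda_\rho'(x,u_\rho-\psi)$ in the smooth case and $\mu_\rho(x) := (1/\rho)\chi_{\{u_\rho-\psi > \kzero_\rho\}}$ in the nonsmooth case; in both settings $0\le \mu_\rho \le 1/\rho$, and the contribution on $\{u_\rho-\psi = \kzero_\rho\}$ vanishes automatically so the pointwise inequality holds everywhere. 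Multiplying by $\alpha_\rho^+$ and splitting, it remains to control
\[
\int_\Omega (u_\rho-\psi)\mu_\rho \alpha_\rho^+ \dx
\quad\text{and}\quad
\int_\Omega \kzero_\rho \mu_\rho \alpha_\rho^+ \dx.
\]
The linchpin is the estimate $\int_\Omega \mu_\rho (\alpha_\rho^+)^2 \dx \le \norm{d}{H^{-1}(\Omega)}^2$, obtained by testing \eqref{eq:pdeForDerivativeStart} with $\alpha_\rho^+$ and using $\alpha_\rho \alpha_\rho^+ = (\alpha_\rho^+)^2$; since $\mu_\rho \le 1/\rho$ this yields $\norm{\mu_\rho \alpha_\rho^+}{L^2(\Omega)} \le C/\sqrt{\rho}$. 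As $\mu_\rho$ is supported where $u_\rho-\psi > \kzero_\rho$, one has $(u_\rho-\psi)\mu_\rho \le (u_\rho-\psi)^+\mu_\rho$, and Cauchy--Schwarz combined with the improved rate $\norm{(u_\rho-\psi)^+}{L^2(\Omega)} = o(\sqrt{\rho})$ from \cref{lem:improved_rate} drives the first integral to $0$. For the second, another Cauchy--Schwarz followed by $\int \kzero_\rho^2 \mu_\rho \dx \le \norm{\kzero_\rho}{L^2(\Omega)}^2/\rho = O(\rho)$ via \eqref{ass:kgrowth} yields a bound of order $O(\sqrt{\rho})$.

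Combining these estimates gives $\langle \xi, \alpha^+\rangle = 0$. The argument for $\langle \xi, \alpha^-\rangle = 0$ is entirely analogous: testing \eqref{eq:pdeForDerivativeStart} with $-\alpha_\rho^-$ produces the required bound $\int \mu_\rho (\alpha_\rho^-)^2 \dx \le \norm{d}{H^{-1}(\Omega)}^2$, and the same decomposition works verbatim after replacing $\alpha_\rho^+$ by $\alpha_\rho^-$. Finally, $\langle \xi, \alpha\rangle = \langle \xi, \alpha^+\rangle - \langle \xi, \alpha^-\rangle = 0$. The main obstacle I expect is the interplay between the improved rate $o(\sqrt{\rho})$ of \cref{lem:improved_rate} and the algebraic inequality of \cref{lem:calcLemma}: the naive $O(\sqrt{\rho})$ rate from \cref{lem:SW_estimate} would only deliver boundedness of the critical integral $(1/\rho)\norm{(u_\rho-\psi)^+}{L^2(\Omega)}\norm{\mu_\rho\alpha_\rho^+}{L^2(\Omega)}\rho$, not its convergence to zero, so the refined rate is truly essential.
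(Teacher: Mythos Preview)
Your argument is correct and relies on the same three pillars as the paper's proof: the convexity inequality of \cref{lem:calcLemma}, the energy bound obtained by testing \eqref{eq:pdeForDerivativeStart}, and the improved rate of \cref{lem:improved_rate}. The organisation, however, differs. The paper decomposes $\Omega$ into $M_0=\{u_\rho-\psi\le\kzero_\rho\}$, $M_1=\{\kzero_\rho<u_\rho-\psi\le\kone_\rho\}$ and $M_2=\{u_\rho-\psi\ge\kone_\rho\}$, applies the calculus lemma only on the transition zone $M_1$, and on $M_2$ uses the explicit formula $\lambda_\rho=(r+\jj_\rho)/\rho$ together with the bound on $\jj_\rho$ from \eqref{eq:jjgrowth}. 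You instead exploit that the inequality $\lambda_\rho(x,r)\le(r-\kzero_\rho(x))\lambda_\rho'(x,r)$ of \cref{lem:calcLemma} holds for \emph{all} $r$, which lets you avoid the three-region split entirely and never touch $\jj_\rho$ or $\kone_\rho$; the price is that you need the weighted Cauchy--Schwarz step $\int\kzero_\rho^2\mu_\rho\dx\le\rho^{-1}\norm{\kzero_\rho}{L^2}^2$ to dispose of the $\kzero_\rho$ term. Both routes are equally rigorous; yours is arguably more streamlined, while the paper's makes the role of the structural assumption \eqref{eq:structure_lambda} on $[\kone_\rho,\infty)$ more visible. A minor remark: the clause ``$\mu_\rho$ is supported where $u_\rho-\psi>\kzero_\rho$'' before your bound $(u_\rho-\psi)\mu_\rho\le(u_\rho-\psi)^+\mu_\rho$ is superfluous, since that pointwise inequality is automatic from $\mu_\rho\ge0$.
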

For a characterisation of this result in terms of the strictly active set, see \eqref{eq:harderwachsmuth_characterisation}.
\begin{proof}
Set $u_\rho := S_\rho(f_\rho)$ and define the sets\footnote{{These sets, being inverse images of measurable sets under measurable functions, are also measurable.}}
\begin{equation*}
M_0 = \{ u_\rho-\psi \leq \kzero_\rho \}, \quad
M_1 = \{\kzero_\rho < u_\rho -\psi { < } \kone_\rho\}, \quad
M_2 = \{ \kone_\rho \leq u_\rho - \psi\}.
\end{equation*}
Note that $M_1 = \emptyset$ in the nonsmooth case.
Further,
\eqref{ass:k1PlusJNonNeg} implies
\begin{equation}
\text{$u_\rho-\psi + \jj_\rho \geq 0$ on $M_2$}.\label{eq:consequenceOfSomething}
\end{equation}
Define $\xi_\rho := f_\rho+\Delta u_\rho$.  We have
\begin{align*}
\langle \xi_\rho, \alpha_\rho \rangle  
&=  \int_\Omega \Lambda_\rho(u_\rho-\psi)\alpha_\rho \d x\\
&=\int_{M_1} \Lambda_\rho(u_\rho-\psi)\alpha_\rho \d x+  \int_{M_2} \frac{u_\rho-\psi + \jj_\rho}{\rho} \alpha_\rho \d x\\
&=\int_{M_1} \Lambda_\rho(u_\rho-\psi)\alpha_\rho \d x+  \int_{M_2} \frac{(u_\rho-\psi + \jj_\rho)^+}{\rho} \alpha_\rho \d x
\end{align*}
with no integral over $M_0$ because $\lambda_\rho(x,\cdot)$ vanishes  on $(-\infty, \kzero_\rho(x)]$
and the positive part is introduced due to \eqref{eq:consequenceOfSomething}.

Regarding the first term, using the estimate \eqref{eq:calcLemmaEstimate}, we have 
\begin{align*}
\left|\int_{M_1} \Lambda_\rho(u_\rho-\psi)\alpha_\rho\right| \d x
&\leq  \int_{M_1} \lambda_\rho(\cdot, u_\rho-\psi)|\alpha_\rho| \d x\\
&\leq \int_{M_1} (u_\rho-\psi- \kzero_\rho)\lambda'_\rho(\cdot, u_\rho-\psi)|\alpha_\rho| \d x\\
&\leq  \frac{\norm{\kone_\rho - \kzero_\rho}{L^2(\Omega)}}{\sqrt{\rho}}\norm{\sqrt{\lambda_\rho'(\cdot, u_\rho-\psi)}\alpha_\rho\chi_{M_1}}{L^2(\Omega)}
\end{align*}
where we used the bounds on $\lambda_\rho'$ given in \eqref{eq:lambdaRhoDerivIncIfC1} to justify the last estimate.
As for the second integral, we obtain
\begin{align*}
\left|\int_{M_2} \frac{(u_\rho-\psi + \jj_\rho)^+}{\rho} \alpha_\rho \d x\right|
&\leq \frac{\norm{(u_\rho-\psi + \jj_\rho)^+\chi_{M_2}}{L^2(\Omega)}}{\sqrt{\rho}} \frac{\norm{\alpha_\rho \chi_{M_2}}{L^2(\Omega)}}{\sqrt{\rho}}.
\end{align*}
Thus
\begin{align}
\nonumber|\langle \xi_\rho, \alpha_\rho \rangle|   &\leq \frac{\norm{\kone_\rho - \kzero_\rho}{L^2(\Omega)}}{\sqrt{\rho}}\norm{\sqrt{\lambda_\rho'(\cdot, u_\rho-\psi)}\alpha_\rho\chi_{M_1}}{L^2(\Omega)}\\
&\quad + \frac{\norm{(u_\rho-\psi + \jj_\rho)^+\chi_{M_2}}{L^2(\Omega)}}{\sqrt{\rho}} \frac{\norm{\alpha_\rho \chi_{M_2}}{L^2(\Omega)}}{\sqrt{\rho}}.\label{eq:productInequality}
\end{align}
By \eqref{ass:kgrowth}, the first multiplicand in the first term above vanishes as $\rho \to 0$. Let us show that the first multiplicand in the second term also vanishes.
By using the triangle inequality and the fact that $(a+b)^+ \leq a^+ + b^+$ for all $a,b \in \mathbb{R}$,
\begin{align*}
	\frac{\norm{(u_\rho-\psi + \jj_\rho)^+\chi_{M_2}}{L^2(\Omega)}}{\sqrt{\rho}}
	& \le
	\frac{\norm{ (u_\rho - \psi)^+ }{L^2(\Omega)}}{\sqrt{\rho}}
	+
	\frac{\norm{ \jj_\rho }{L^2(\Omega)}}{\sqrt{\rho}}
\end{align*}
and both terms on the right-hand side vanish by \cref{lem:improved_rate} and \eqref{eq:jjgrowth} respectively.

Now we simply need to show that second multiplicand in the two terms on the right-hand side of \eqref{eq:productInequality} are bounded.  Since $\alpha_\rho$ is bounded {(in $H^1_0(\Omega)$)}, so is $d+\Delta\alpha_\rho$ {(in $H^{-1}(\Omega)$)}.
Hence, {for a constant $C$ independent of $\rho$, we have}
\begin{align*}
C &\geq |\langle d+ \Delta\alpha_\rho, \alpha_\rho \rangle|
= \int_\Omega \Lambda_\rho'(u_\rho-\psi)(\alpha_\rho)\alpha_\rho \d x\\
&\ge\int_{M_1}\lambda_\rho'(\cdot, u_\rho-\psi)(\alpha_\rho)\alpha_\rho \d x
+ \int_{M_2}\lambda_\rho'(\cdot, u_\rho-\psi)(\alpha_\rho)\alpha_\rho \d x,
\end{align*}
where we used that the integral over $M_0$ is nonnegative
due to \eqref{ass:lambdaAtK0Positive}.
Now,
on $M_1$, the derivative is linear w.r.t.\ the direction
and on $M_2$, the derivative is $1/\rho$.
Thus, we continue with
\begin{align*}
	C
&\geq  \int_{M_1}\lambda_\rho'(\cdot, u_\rho-\psi)\alpha_\rho^2 \d x + \frac 1\rho \int_{M_2}\alpha_\rho^2 \d x \\
&= \norm{\sqrt{\lambda_\rho'(\cdot, u_\rho-\psi)}\alpha_\rho\chi_{M_1}}{L^2(\Omega)}^2 + \frac 1\rho \norm{\alpha_\rho\chi_{M_2}}{L^2(\Omega)}^2.
\end{align*}
Plugging this information back into  \eqref{eq:productInequality}, we find that $\langle \xi_\rho, \alpha_\rho \rangle \to 0$ and hence, since $\alpha_\rho \weaklyto \alpha$ in $H^1_0(\Omega)$ and  $\xi_\rho = f_\rho + \Delta u_\rho \to f + \Delta u = \xi$ in $H^{-1}(\Omega)$,
\[\langle \xi, \alpha \rangle = 0.\]
In order to produce the same identity for $\alpha^+$,
one can repeat exactly the same calculation with $\alpha_\rho^+$ instead of $\alpha_\rho$.
In the final step,
we can use that
$\alpha_\rho \weaklyto \alpha$ in $H_0^1(\Omega)$
implies
$\alpha_\rho^+ \weaklyto \alpha^+$ in $H_0^1(\Omega)$.
This yields the claim.
\end{proof}

\subsection{An orthogonality condition on \texorpdfstring{$-\Delta\alpha-d$}{-Delta alpha - d}}\label{sec:orthogonality_lap_alpha}
In this section we will show that $-\Delta\alpha -d$ satisfies an orthogonality condition involving the coincidence set. {First, we introduce some notions from potential theory. The \textit{capacity} of a set $O \subset \Omega$ is defined as
\begin{equation*}
\mathrm{cap}(O) := \inf \left\{\norm{\grad v}{L^2(\Omega)}^2 : v \in H^1_0(\Omega)\text{ and } v \geq 1 \text{ a.e. in a neighbourhood of $O$}\right\}.
\end{equation*}
A set could have measure zero but positive capacity and in general the measure of a set is bounded from above by a constant times its capacity; thus the capacity is a finer way to `measure' sets. 

A function $v \colon \Omega \to \mathbb{R}$ is called \textit{quasi-continuous} if for all $\epsilon > 0$, there exists an open set $G_\epsilon \subset \Omega$ with $\mathrm{cap}(G_\epsilon) < \epsilon$ and $v$ is continuous on $\Omega \setminus G_\epsilon$. A set $O \subset \Omega$ is called \textit{quasi-open} if for all $\epsilon > 0$, there exists an open set $G_\epsilon \subset \Omega$ with $\mathrm{cap}(G_\epsilon) < \epsilon$ and $O \cup G_\epsilon$ is open. A set is \textit{quasi-closed} if its complement in $\Omega$ is quasi-open. 

We say that a property $P(x)$ depending on $x \in \Omega$ \textit{holds quasi-everywhere} if it holds everywhere except on a set of capacity zero, i.e., if 
\[\mathrm{cap}(\{x \in \Omega : P(x) \text{ does not hold}\}) = 0,\]
and we write q.e. in place of quasi-everywhere.  Note that:
\begin{itemize}\itemsep=0em
\item If $v$ is quasi-continuous, the set $\{ v > 0\}$ is quasi-open. 
\item Every $v \in H^1(\Omega)$ possesses a quasi-continuous representative which we can identify $v$ with (we will do this from now on). 
	This representative is unique up to subsets of capacity zero.
\item Consequently, sets such as $\{ v \geq 0\}$ with $v \in H^1(\Omega)$ are defined up to sets of capacity zero by using the quasi-continuous representative.
\item Every sequence which which converges in $H^1_0(\Omega)$ possesses a subsequence that converges pointwise q.e.
\end{itemize}
For more details, see \cite[\S 6.4.3]{MR1756264}, and for an introduction tailored to optimal control, we refer to \cite[Section~3]{HarderWachsmuth}.}

\begin{lem}
	\label{lem:almost_complementarity_condition_purple}
{Let \cref{ass:for_Bsw,ass:1} hold.}
We have
	\begin{equation*}
		\dual{-\Delta\alpha_\rho-d}{v} = 0
		\quad
		\forall v \in H^1_0(\Omega), \;\; v = 0 \text{ q.e. on $\{(u_\rho- \psi + \norm{\kzero_\rho}{L^\infty(\Omega)})^- =0\}$}
		.
	\end{equation*}
\end{lem}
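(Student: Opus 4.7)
The plan is to exploit the PDE for $\alpha_\rho$ from \cref{lem:boundOnDerivative}, namely
\begin{equation*}
-\Delta \alpha_\rho + \Lambda_\rho'(u_\rho-\psi)(\alpha_\rho) = d,
\end{equation*}
which immediately rewrites the claim as showing
\begin{equation*}
\int_\Omega \lambda_\rho'(x, u_\rho(x)-\psi(x))(\alpha_\rho(x))\, v(x) \d x = 0
\end{equation*}
for every test function $v \in H^1_0(\Omega)$ vanishing q.e.\ on the set $\{(u_\rho - \psi + \norm{\kzero_\rho}{L^\infty(\Omega)})^- =0\}$.

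The central observation is the set inclusion
\begin{equation*}
\{ (u_\rho - \psi + \norm{\kzero_\rho}{L^\infty(\Omega)})^- > 0 \}
= \{ u_\rho - \psi < -\norm{\kzero_\rho}{L^\infty(\Omega)} \}
\subseteq \{ u_\rho - \psi < \kzero_\rho \},
\end{equation*}
which follows pointwise a.e.\ since $\kzero_\rho(x) \geq -\norm{\kzero_\rho}{L^\infty(\Omega)}$. Because sets of zero capacity have Lebesgue measure zero, the quasi-everywhere vanishing assumption on $v$ upgrades to a.e.\ vanishing on the same set, so $v$ is supported a.e.\ inside the strict inequality set $\{u_\rho - \psi < \kzero_\rho\}$.

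On this strict inequality set, for a.a.\ such $x$ the function $\lambda_\rho(x,\cdot)$ is identically zero on an open neighborhood of $u_\rho(x)-\psi(x)$ (namely the interval $(-\infty, \kzero_\rho(x))$, per \eqref{eq:structure_lambda}), which forces the directional derivative $\lambda_\rho'(x, u_\rho(x)-\psi(x))(h) = 0$ for every $h \in \R$, and in particular at $h = \alpha_\rho(x)$. Thus the integrand vanishes a.e.\ and the integral is zero, completing the proof.

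The only delicate point is the passage from the quasi-everywhere to the almost-everywhere vanishing of $v$; this is standard since zero-capacity sets are Lebesgue null, but it is the step that justifies reducing the whole argument to a pointwise a.e.\ computation. Everything else is just reading off the structure of $\lambda_\rho$ from \cref{ass:1} \ref{item:iii}.
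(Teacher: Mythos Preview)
Your proof is correct and follows essentially the same approach as the paper. Both arguments reduce to the observation that the integrand $\lambda_\rho'(\cdot,u_\rho-\psi)(\alpha_\rho)\,v$ vanishes a.e.: the paper phrases the key set inclusion as $\{u_\rho-\psi \ge \kzero_\rho\} \subset \{u_\rho-\psi+\norm{\kzero_\rho}{L^\infty(\Omega)}\ge 0\}$ (so $v=0$ where $\lambda_\rho'$ might not vanish), while you use the complementary inclusion $\{u_\rho-\psi < -\norm{\kzero_\rho}{L^\infty(\Omega)}\} \subset \{u_\rho-\psi < \kzero_\rho\}$ (so $\lambda_\rho'=0$ on the a.e.\ support of $v$); these are logically equivalent.
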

Note that the set
$\{(u_\rho- \psi + \norm{\kzero_\rho}{L^\infty(\Omega)})^- =0\}$
is defined up to sets of capacity zero,
since $u_\rho - \psi + \norm{\kzero_\rho}{L^\infty(\Omega)} \in H^1(\Omega)$
is quasicontinuous, see \cite[Theorem 6.1, \S 8]{MR2731611}.
 \begin{proof}
Taking $v \in H^1_0(\Omega)$ such that   $v = 0$ q.e. on $\{(u_\rho- \psi + \norm{\kzero_\rho}{L^\infty(\Omega)})^- =0\}$,  we have
\begin{align}
\langle -\Delta\alpha_\rho -d, v \rangle
= -\int_{\{u_\rho-\psi \geq \kzero_\rho\}} \Lambda_{\rho}'(u_\rho-\psi)(\alpha_\rho) v \d x
\label{eq:pre_1_purple_new} 
\end{align}
because $\Lambda_\rho'(u_\rho-\psi)$ vanishes when $u_\rho-\psi \leq \kzero_\rho$.
By definition, we have that $v=0$ q.e.\ on the set $\{u_\rho- \psi + \norm{\kzero_\rho}{L^\infty(\Omega)} \geq 0\}$.
Note that  $u_\rho-\psi+\norm{\kzero_\rho}{L^\infty(\Omega)} \geq u_\rho-\psi - \kzero_\rho$ a.e.\ and so we have the inclusion
\[\{u_\rho-\psi - \kzero_\rho \geq 0\} \subset \{u_\rho-\psi+\norm{\kzero_\rho}{L^\infty(\Omega)}  \geq 0\}.\]
It follows that $v=0$ a.e. on $\{u_\rho-\psi - \kzero_\rho \geq 0\}$ too.
Using this in \eqref{eq:pre_1_purple_new} we obtain the desired statement.
\end{proof}

\begin{prop}\label{prop:multiplerConditionsNew_purple}
{Let \cref{ass:for_Bsw,ass:1} hold.}
We have
\begin{align*}
 \langle -\Delta\alpha-d, v \rangle &= 0  \qquad \text{$\forall v \in H^1_0(\Omega) : v = 0$ q.e. on $\{u=\psi\}$}.
\end{align*}
\end{prop}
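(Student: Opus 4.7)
The strategy is to pass to the limit $\rho \to 0$ in the orthogonality identity from \cref{lem:almost_complementarity_condition_purple}, which asserts $\langle -\Delta\alpha_\rho - d, w\rangle = 0$ for every $w \in H^1_0(\Omega)$ vanishing q.e.\ on the set $E_\rho := \{u_\rho - \psi + \norm{\kzero_\rho}{L^\infty(\Omega)} \ge 0\}$. Since $u \le \psi$ q.e., the coincidence set $\{u=\psi\}$ agrees q.e.\ with $\{u - \psi \ge 0\}$; and since $u_\rho \to u$ strongly in $H^1_0(\Omega)$ by \cref{prop:convergence_to_VI_soln} and $\norm{\kzero_\rho}{L^\infty(\Omega)} \to 0$ by \eqref{ass:k0ConvergesToZeroLinf_purple}, the sets $E_\rho$ collapse onto $\{u=\psi\}$ in a capacitary sense.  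The task is to translate this heuristic into a passage to the limit against a fixed test function $v$ with $v=0$ q.e.\ on $\{u=\psi\}$.

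To make this concrete, I would introduce a Lipschitz cutoff $\phi_\epsilon\colon \R \to [0,1]$ with $\phi_\epsilon(s) = 1$ for $s \le -\epsilon$, $\phi_\epsilon(s) = 0$ for $s \ge 0$, and linear in between, and set
\[
v^\epsilon_\rho := v \cdot \phi_\epsilon\bigl(u_\rho - \psi + \norm{\kzero_\rho}{L^\infty(\Omega)}\bigr) \in H^1_0(\Omega),
\]
which by construction (and the quasicontinuity of $u_\rho - \psi + \norm{\kzero_\rho}{L^\infty(\Omega)}$) vanishes q.e.\ on $E_\rho$.  \cref{lem:almost_complementarity_condition_purple} then gives $\langle -\Delta\alpha_\rho - d, v^\epsilon_\rho\rangle = 0$, and we take a double limit.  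For fixed $\epsilon > 0$, the Lipschitzness of $\phi_\epsilon$, the convergence $u_\rho - \psi + \norm{\kzero_\rho}{L^\infty(\Omega)} \to u - \psi$ in $H^1(\Omega)$, and dominated convergence yield $v^\epsilon_\rho \to v^\epsilon := v\,\phi_\epsilon(u - \psi)$ in $H^1_0(\Omega)$.  Combined with $-\Delta\alpha_\rho - d \weaklyto -\Delta\alpha - d$ in $H^{-1}(\Omega)$ (since $\alpha_\rho \weaklyto \alpha$ by \cref{prop:convergence_derivatives}), this gives $\langle -\Delta\alpha - d, v^\epsilon\rangle = 0$.  It then remains to let $\epsilon \to 0$: since $v=0$ q.e.\ on $\{u=\psi\}$ entails $\nabla v = 0$ a.e.\ on $\{u=\psi\}$ by Stampacchia's truncation lemma, one expects $v^\epsilon \to v$ in $H^1_0(\Omega)$, which completes the proof.

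The hard part is exactly this last convergence $v^\epsilon \to v$ in $H^1_0(\Omega)$ as $\epsilon \to 0$: the gradient of $v^\epsilon$ carries the singular contribution $-\frac{v}{\epsilon} \chi_{\{-\epsilon < u - \psi < 0\}} \nabla(u - \psi)$, whose $L^2$-norm a priori blows up at rate $1/\epsilon$ unless $v$ is quantitatively small on the strip $\{-\epsilon < u - \psi < 0\}$.  The vanishing of $v$ on $\{u=\psi\}$ is a q.e.\ statement and must be leveraged in a refined way.  A clean remedy is to first approximate $v$ in $H^1_0(\Omega)$ by functions $v_n$ that vanish on an open (or quasi-open) neighbourhood of $\{u=\psi\}$---a capacity-theoretic density statement---so that the cutoff $\phi_\epsilon$ acts trivially on $\operatorname{supp}(v_n)$ once $\epsilon$ is small enough, then diagonalize in $(n,\rho)$.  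Alternatively, one can invoke the $\gamma$-convergence of the capacitary measures $\mu_\rho$ defined in \eqref{eq:defn_mu_rho_intro}, which is the machinery set up in \cref{sec:char_limits} anyway, and which bypasses the pointwise gradient analysis by relating $\int v^2 \d\mu_\rho \to 0$ directly to $v=0$ q.e.\ on $\{u=\psi\}$.
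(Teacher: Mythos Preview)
Your strategy departs from the paper's, and as written it has a genuine gap at the step you yourself flag as ``the hard part''.

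The paper does not use any cutoff in the test function. Instead it shows that
$s_\rho := (u_\rho - \psi + \norm{\kzero_\rho}{L^\infty(\Omega)})^-$
converges to $s := \psi - u$ \emph{in capacity} (using $u_\rho \to u$ in $H^1_0(\Omega)$ together with \cite[Lemma~2.2]{TowardsM} and \eqref{ass:k0ConvergesToZeroLinf_purple}), rewrites the identity of \cref{lem:almost_complementarity_condition_purple} as
\[
\pm(-\Delta\alpha_\rho - d) \in \set{ v \in H^1_0(\Omega) : v \ge 0 \text{ q.e., } v = 0 \text{ q.e. on } \{s_\rho = 0\} }^\circ,
\]
and then invokes the polar-cone stability result \cite[Lemma~2.6]{TowardsM} to pass to the limit directly. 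No double limit, no truncation of $v$, no gradient blow-up to control.

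In your approach the gap is real: you need $v^\epsilon = v\,\phi_\epsilon(u-\psi) \to v$ in $H^1_0(\Omega)$, and the singular term $\epsilon^{-1} v\,\chi_{\{-\epsilon < u-\psi < 0\}}\nabla(u-\psi)$ does not go to zero without further input. Stampacchia only gives $\nabla v = 0$ a.e.\ on $\{u=\psi\}$, which says nothing about the \emph{strip} $\{-\epsilon < u-\psi < 0\}$. Your remedy~(a) --- approximating $v$ by functions vanishing on a quasi-open neighbourhood of $\{u=\psi\}$ --- is correct in spirit but is itself a nontrivial capacity-theoretic density statement (essentially a Mosco-type convergence of $H^1_0(\{u-\psi < -1/n\})$ to $H^1_0(\{u<\psi\})$); you would need to prove or cite it, and doing so is roughly as much work as the paper's route. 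Your remedy~(b) is not usable here: the $\gamma$-convergence machinery in \cref{sec:char_limits} is set up \emph{after} this proposition and in fact uses it (in the proof of \cref{thm:main_result}); moreover $\int v^2\,\d\mu_\rho$ need not be finite, let alone tend to zero, since $\mu_\rho$ has density up to $1/\rho$ on a set that only shrinks to $\{u=\psi\}$ in capacity, not in any quantitative way relative to $v$.

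If you want to salvage the direct route, the cleanest fix is precisely the capacity-convergence argument the paper uses: prove $s_\rho \to \psi-u$ in capacity and appeal to \cite[Lemma~2.6]{TowardsM}, rather than attempting to build admissible test functions by hand.
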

\begin{proof}
By \eqref{ass:k0ConvergesToZeroLinf_purple} it follows that $u_\rho- \psi + \norm{\kzero_\rho}{L^\infty(\Omega)} \to u-\psi$ in $H^1(\Omega)$.
We want to show that $s_\rho := (u_\rho- \psi + \norm{\kzero_\rho}{L^\infty(\Omega)})^-$
converges to $s := (u - \psi)^- = \psi-u$ in capacity{, i.e.,
\[\capa(\{|s_\rho-s| \geq \varepsilon\}) \to 0\quad\text{as } \rho \to 0\]
for all $\varepsilon > 0$.}

{As $u_\rho \to u$ in $H^1_0(\Omega)$ (due to \cref{prop:convergence_to_VI_soln}), we can apply} \cite[Lemma 2.2]{TowardsM} to get that $u_\rho$ converges towards $u$ in capacity. 
Let $\varepsilon > 0$ be arbitrary.
By \eqref{ass:k0ConvergesToZeroLinf_purple},
there exists $\rho_0 > 0$ such that $\rho \leq \rho_0$ implies $\norm{\kzero_\rho}{L^\infty(\Omega)} \leq \frac{\varepsilon}{2}$.
Restricting to such sufficiently small $\rho$, we have
\begin{equation*}
	\abs{ s_\rho - s }
	\le
	\abs{ u_\rho + \norm{\kzero_\rho}{L^\infty(\Omega)} - u }
	\le
	\abs{ u_\rho - u } + \frac\varepsilon2.
\end{equation*}
Consequently,
\[
\capa(\{|s_\rho-s| \geq \varepsilon\})
\le
\capa(\{|u_\rho-u| \geq \varepsilon/2\})
\to 0
\quad\text{as } \rho \to 0.
\]
So we have shown that $s_\rho \to s$ in capacity as desired.

Now, the result of \cref{lem:almost_complementarity_condition_purple} implies  
\[\pm(-\Delta\alpha_\rho -d) \in \{ v \in H^1_0(\Omega) : v \geq 0 \text{ q.e. on $\Omega$ and } v=0 \text{ q.e. on } \{s_\rho = 0 \}\}^\circ\]
with $s_\rho = (u_\rho-\psi+ \norm{\kzero_\rho}{L^\infty(\Omega)})^-$ as above {and we used the notation $^\circ$ to denote the polar cone, i.e., if $M \subset H^1_0(\Omega)$ is a set then $M^\circ := \{ f \in H^{-1}(\Omega) : \langle f, v \rangle \leq 0 \text{ for all $v \in M$}\}$.}

{Bearing in mind that $(-\Delta\alpha_\rho-d) \weaklyto (-\Delta \alpha-d)$ in $H^{-1}(\Omega)$, utilising} \cite[Lemma 2.6]{TowardsM} we find
\[\pm(-\Delta\alpha -d) \in \{ v \in H^1_0(\Omega) : v \geq 0 \text{ q.e. on $\Omega$ and } v=0 \text{ q.e. on } \{s= 0 \} \}^\circ\]
where $s=(u-\psi)^- = \psi-u$ is the limit (w.r.t.\ convergence in capacity) of $s_\rho$.
This yields the desired statement after using the decomposition $v=v^+-v^-$.
\end{proof}

\begin{remark}
	The proof of \cref{lem:almost_complementarity_condition_purple} even shows
	\begin{equation*}
		\dual{-\Delta\alpha_\rho-d}{v} = 0
		\quad
		\forall v \in H^1_0(\Omega), \;\; v = 0 \text{ a.e. on $\{(u_\rho- \psi + \norm{\kzero_\rho}{L^\infty(\Omega)})^- =0\}$}
		.
	\end{equation*}
	However, one cannot generalise (the proof of) \cref{prop:multiplerConditionsNew_purple}
	to obtain
	\begin{align*}
		\langle -\Delta\alpha-d, v \rangle &= 0  \qquad \text{$\forall v \in H^1_0(\Omega) : v = 0$ a.e. on $\{u=\psi\}$}.
	\end{align*}
	The problem is the following.
	The set $\{u = \psi\}$ could have measure zero but positive capacity.
	In this case, the assumption $v = 0$ a.e.\ on $\{u = \psi\}$ is void.
	Consequently, $v$ could be constant $1$ in a neighbourhood of $\{u = \psi\}$.
	If further $v_\rho \to v$ in $H_0^1(\Omega)$ is given
	(this is needed for passing to the limit with $\dual{-\Delta\alpha_\rho-d}{v}$),
	we have $v_\rho \to v$ in capacity.
	Further, $\{u_\rho - \psi + \norm{\kzero_\rho}{L^\infty} \ge 0\}$
	could contain an open neighbourhood of $\{u = \psi\}$.
	Therefore, $v_\rho = 0$ a.e.\ on $\{u_\rho - \psi + \norm{\kzero_\rho}{L^\infty} \ge 0\}$
	implies
	$v_\rho = 0$ q.e.\ on $\{u = \psi\}$.
	This contradicts 
	$v_\rho \to v$ in capacity.

\end{remark}

If $\psi \in H^1_0(\Omega)$, the argument above can be simplified and we could also alternatively have argued in a similar way to \cite[\S 7]{AHRW}.

\section{Generalised derivatives as limits}
In the convergence results of the previous section, the direction $d$ was fixed.  We need something stronger than this: we would like limiting statements for the derivative when seen as a linear operator, and furthermore, we would like the subsequence that converges to be independent of $d$. 

\subsection{Existence of limiting elements}\label{sec:existence_limiting}
First, we prove that the Gâteaux derivatives
converge in the weak operator topology.
\begin{prop}\label{prop:existenceQGeneral}
        {Let \cref{ass:for_Bsw,ass:1} hold.}
	Let a sequence $\{f_\rho\} \subset H^{-1}(\Omega)$
	be given such that
	$f_\rho \to f$ in $H^{-1}(\Omega)$
	and such that $S_\rho$ is Gâteaux differentiable at $f_\rho$ for all $\rho > 0$.
	Then,
	there exists a map $\Qf \in \mathcal{L}(H^{-1}(\Omega),H^1_0(\Omega))$ such that for a subsequence (that we relabel),
\begin{align*}
S_\rho'(f_\rho) &\WOT \Qf.
\end{align*}
\end{prop}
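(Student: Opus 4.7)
The plan is to use the uniform operator bound from \cref{lem:boundOnDerivative} together with a standard diagonal extraction over a countable dense subset of $H^{-1}(\Omega)$ to build a common subsequence independent of the direction, and then extend the limit to all of $H^{-1}(\Omega)$ by an $\varepsilon/3$-argument exploiting reflexivity.

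First, recall from \cref{lem:boundOnDerivative} that
\begin{equation*}
\norm{S_\rho'(f_\rho)}{\mathcal{L}(H^{-1}(\Omega), H^1_0(\Omega))} \le 1,
\end{equation*}
uniformly in $\rho$. Since $H^{-1}(\Omega)$ is separable, fix a countable dense subset $\{d_k\}_{k \in \N} \subset H^{-1}(\Omega)$. Applying \cref{prop:convergence_derivatives} to the direction $d_1$ gives a subsequence of $\rho$'s along which $S_\rho'(f_\rho)(d_1) \weaklyto \Qf(d_1)$ in $H^1_0(\Omega)$. Iterating this procedure for $d_2, d_3, \ldots$ and using Cantor's diagonal argument, I obtain a single subsequence (relabelled as $\rho$) such that $S_\rho'(f_\rho)(d_k) \weaklyto \Qf(d_k)$ in $H^1_0(\Omega)$ for every $k \in \N$.

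Next, I extend $\Qf$ to all of $H^{-1}(\Omega)$ using the uniform bound. Given $d \in H^{-1}(\Omega)$ and a test functional $\phi \in (H^1_0(\Omega))^*$, pick $d_k$ from the dense set with $\norm{d - d_k}{H^{-1}(\Omega)} < \varepsilon$. Using the uniform operator bound,
\begin{align*}
	\MoveEqLeft
	\abs*{ \dual{\phi}{(S_{\rho}'(f_{\rho}) - S_{\rho'}'(f_{\rho'}))(d)} } \\
	&\leq 2 \norm{\phi}{(H_0^1(\Omega))^*} \varepsilon
	+ \abs*{\dual{\phi}{(S_{\rho}'(f_{\rho}) - S_{\rho'}'(f_{\rho'}))(d_k)}},
\end{align*}
and the last term vanishes as $\rho, \rho' \to 0$ by the previous step. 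Hence $\dual{\phi}{S_\rho'(f_\rho)(d)}$ is Cauchy in $\R$ for every $\phi$. This defines a bounded linear functional on $(H_0^1(\Omega))^*$, and by reflexivity of $H^1_0(\Omega)$ it is represented by a unique element $\Qf(d) \in H^1_0(\Omega)$ with $S_\rho'(f_\rho)(d) \weaklyto \Qf(d)$.

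Finally, the Gâteaux differentiability assumption ensures each $S_\rho'(f_\rho)$ is linear, so linearity of $\Qf$ is inherited in the weak limit, and weak lower semicontinuity of the $H_0^1$-norm combined with \cref{lem:boundOnDerivative} yields $\norm{\Qf(d)}{H_0^1(\Omega)} \le \norm{d}{H^{-1}(\Omega)}$, so $\Qf \in \mathcal{L}(H^{-1}(\Omega), H^1_0(\Omega))$. The only point requiring care is ensuring the subsequence works for all $d$ simultaneously, which is exactly what the $\varepsilon/3$-argument combined with the uniform operator bound delivers; beyond that the proof is a standard WOT-compactness argument for the unit ball of bounded operators from a separable space into a reflexive one.
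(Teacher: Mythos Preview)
Your argument is correct and follows essentially the same approach as the paper: both rely on the uniform operator bound $\norm{S_\rho'(f_\rho)}{\mathcal{L}(H^{-1}(\Omega),H^1_0(\Omega))} \le 1$ from \cref{lem:boundOnDerivative} and the sequential compactness of the closed unit ball of $\mathcal{L}(H^{-1}(\Omega),H^1_0(\Omega))$ in the weak operator topology. The only difference is that the paper simply cites this compactness (invoking separability and a reference), whereas you spell out the standard proof via a diagonal extraction on a countable dense subset of $H^{-1}(\Omega)$ followed by an $\varepsilon/3$ extension; this is exactly the content of the cited abstract fact.
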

\begin{proof}
The bound of \cref{lem:boundOnDerivative} shows that 
\[\norm{S_\rho'(f_\rho)}{\mathcal{L}(H^{-1}(\Omega),H^1_0(\Omega))} \leq 1\]
for all $\rho > 0$.
By the separability of $H^1_0(\Omega)$, one can show that the unit ball is sequentially compact in the WOT,
see also \cite[Remark~3.1]{CCMW}. The result follows.
\end{proof}
Note that the Gâteaux differentiability  assumption of the previous result holds automatically in the smooth case;
this follows from \eqref{eq:Lambda_and_lambda}.
The question remains whether it can also hold in the nonsmooth case.
We can in fact use the generalised Rademacher's theorem of Mignot \cite[Theorem 1.2]{Mignot},
which tells us that for each $\rho$, there exists a dense set on which $S_\rho$ is Gâteaux differentiable.
However, this dense set depends on $\rho$,
and  because we wish to consider limits it would be ideal to have \emph{one} dense set on which $S_\rho$ is Gâteaux
for all $\rho$ (or at least for a sequence $\{\rho_n\}$).
We cannot simply take the intersection because an intersection of dense sets may not be dense.
Thus we need another argument where we essentially embed $\{S_\rho\}$ into an infinite dimensional space,
which we give next.
\begin{lem}\label{lem:density_result_general}
        {Let \cref{ass:for_Bsw,ass:1} hold.}
	Let a sequence $\{\rho_n\} \subset (0,1]$ with $\rho_n \to 0$ be given
	and, for brevity, set $S_n := S_{\rho_n}$.
	Then,
there exists a dense set $F \subset H^{-1}(\Omega)$ such that $S_n$ is Gâteaux differentiable on $F$ for all $n \in \mathbb{N}$.
\end{lem}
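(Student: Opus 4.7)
The plan is to bundle the countable family $\{S_n\}_{n \in \N}$ into a single Lipschitz and directionally differentiable map between separable Hilbert spaces, apply Mignot's theorem once to this aggregate, and then deduce that its dense set of Gâteaux differentiability is a common one for every $S_n$ simultaneously. Fix positive weights $\{c_n\}_{n \in \N}$ with $\sum_{n} c_n^2 < \infty$, set $Y := \ell^2(\N; H_0^1(\Omega))$ (a separable Hilbert space with the natural inner product), and define
\[
\Sigma \colon H^{-1}(\Omega) \to Y, \qquad \Sigma(f) := (c_n S_n(f))_{n \in \N}.
\]

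The first step is to verify that $\Sigma$ inherits enough regularity from the individual $S_n$. By \cref{lem:existence_lipschitz_urho}, each $S_n$ is $1$-Lipschitz and satisfies a bound on $\norm{S_n(f)}{H_0^1(\Omega)}$ that is uniform in $n$ (since all $\rho_n \in (0,1]$), so $\Sigma$ is well-defined and Lipschitz with constant $(\sum_n c_n^2)^{1/2}$. By \cref{lem:boundOnDerivative} each $S_n$ is directionally differentiable with $\norm{S_n'(f)(d)}{H_0^1(\Omega)} \le \norm{d}{H^{-1}(\Omega)}$; this uniform-in-$n$ envelope, combined with coordinatewise convergence of the difference quotients, yields via dominated convergence in $\ell^2$ the directional differentiability of $\Sigma$ with $\Sigma'(f)(d) = (c_n S_n'(f)(d))_{n}$. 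Now I would invoke Mignot's theorem \cite[Theorem 1.2]{Mignot}, whose conclusion applies to any Lipschitz directionally differentiable map between separable Hilbert spaces, to obtain a dense $F \subset H^{-1}(\Omega)$ on which $\Sigma$ is Gâteaux differentiable. For $f \in F$, linearity of $d \mapsto \Sigma'(f)(d)$ is inherited by each coordinate projection $d \mapsto c_n S_n'(f)(d)$, so $S_n$ is Gâteaux differentiable at $f$ for every $n \in \N$ at once.

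The main obstacle I anticipate is justifying the application of Mignot's theorem to the aggregate $\Sigma$: a naive alternative of separately applying Mignot to each $S_n$ and intersecting the resulting dense sets $F_n$ fails because a countable intersection of dense sets in an infinite-dimensional Banach space need not be dense. Aggregating the maps into a single $\Sigma$ taking values in $Y$ is precisely what bypasses this obstruction, provided one checks that the proof of Mignot's theorem only uses Lipschitzness, directional differentiability, and separability of domain and target, so that the result is not restricted to the case where those two spaces coincide. Verifying this mild extension (or reformulating $\Sigma$ to fit the exact form of the cited theorem by, e.g., composing with $-\Delta$ coordinatewise to land in $\ell^2(\N; H^{-1}(\Omega))$) is the single nontrivial point of the argument.
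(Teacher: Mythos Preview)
Your proposal is correct and follows essentially the same approach as the paper: bundle the countable family into a single Lipschitz map $H^{-1}(\Omega) \to \ell^2(\N; H_0^1(\Omega))$ with $\ell^2$-summable weights (the paper uses $c_n = 2^{-n/2}$), apply Mignot's theorem once, and read off Gâteaux differentiability of each $S_n$ from that of the aggregate. The paper does not explicitly verify directional differentiability of the aggregate before invoking Mignot, and it also flags the same obstruction you identify (intersecting the individual dense sets need not yield a dense set) as the reason for the aggregation trick.
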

We remark that this result
also follows from the theory of Aronszajn null sets.
Indeed, because $S_\rho$ is Lipschitz, $S_\rho$ is G\^ateaux differentiable outside an Aronszajn null set \cite[Theorem 6.42]{MR1727673}
and the countable union of Aronszajn null sets remains Aronszajn null (see the paragraph after Definition 6.23 of \cite{MR1727673}).
This implies the result.
For convenience, however, we give a different and direct proof.
\begin{proof}
We denote by $\mathcal{V} := \ell^2(\mathbb{N};H^1_0(\Omega))$ the set of $\ell^2$-summable $H^1_0(\Omega)$-valued sequences.
Consider the map $H\colon H^{-1}(\Omega) \to \mathcal{V}$ defined by  
\[H(f) := \parens[\big]{ 2^{-1/2} S_1(f), 2^{-2/2} S_2(f), \hdots },\]
i.e., the $n$th component is $[H(f)]_n = 2^{-n/2} S_n(f)$.
Note that
\begin{equation*}
	\norm{H(f)}{\mathcal{V}}^2 = \sum_{n=1}^\infty \norm{2^{-n/2} S_n(f)}{H^1_0(\Omega)}^2  \leq C^2\sum_{n=1}^\infty 2^{-n} = C^2
	<\infty
	,
\end{equation*}
using the fact that $\norm{S_n(f)}{H^1_0(\Omega)} \leq C$ for a constant $C$
independent of $n$, see \cref{lem:existence_lipschitz_urho}.
Further, $H$ is Lipschitz, since
\begin{equation*}
\norm{H(f)-H(g)}{\mathcal{V}}^2
\leq  \sum_{n=1}^\infty 2^{-n} \norm{ (S_n(f)-S_n(g))}{H^1_0(\Omega)}^2
\le  \norm{f-g}{H^{-1}(\Omega)}^2,
\end{equation*}
where we used the fact that each $S_n$ is Lipschitz with constant $1$,
see \cref{lem:existence_lipschitz_urho}.
Since $\mathcal{V}$ is a Hilbert space and $H^{-1}(\Omega)$ is a separable Hilbert space, it follows \cite[Theorem 1.2]{Mignot} that $H$ is G\^ateaux differentiable on a dense set $F \subset H^{-1}(\Omega)$.
Now, it is straightforward to check that
the Gâteaux differentiability of $H$ on $F$
implies that $S_n$ is Gâteaux differentiable on $F$
for all $n \in \N$.
\end{proof}

Observe that this result holds for a countable sequence and does not and cannot hold for the original uncountable family $\{S_\rho\}$. This is  because the differentiability holds up to a set of measure zero and countable unions (of the exceptional sets) still have measure zero, which of course, does not apply for the uncountable case. A simple real-valued example is $S_\rho(x) := | x - \rho |.$ This function is not differentiable at $\rho$, so if one considers all of these functions, the common differentiability set is just the empty set.

\subsection{Capacitary measures and characterisation of limits}\label{sec:char_limits}

The measure $\mu_\rho$ defined in \eqref{eq:defn_mu_rho_intro}
has some additional regularity that we can exploit.
We begin with recalling the notion of capacitary measures, see, e.g., \cite[Definition 3.1]{RW}.
\begin{defn}[Capacitary measure]
A \emph{capacitary measure} is a Borel measure $\mu$ such that 
\begin{enumerate}[label=(\roman*)]\itemsep=0em
\item $\mu(B)=0$ for every Borel set $B \subset \Omega$ with $\capa(B) = 0$ 
\item $\mu$ is regular in the sense that every Borel set $B \subset \Omega$ satisfies
\[
	\mu(B) = \inf\set{ \mu(O) : O \text{ quasi-open and } \capa(B \setminus O) = 0}.
\]
\end{enumerate}
We denote by $\M_0(\Omega)$ the set of all capacitary measures on $\Omega$.
\end{defn}
Note that a measure which is absolutely continuous w.r.t.\ the Lebesgue measure
is a capacitary measure,
see also \cite[Remark 3.2 and Definition 2.1]{DalMaso}.
In particular,
\eqref{eq:defn_mu_rho_intro}
defines a capacitary measure in the setting of \cref{lem:linearity_etc_of_murho}.

From now on,
given (an equivalence class) $v \in H_0^1(\Omega)$,
we will always work with a Borel measurable
and quasicontinuous representative.
Note that such a representative is uniquely determined
up to subsets of capacity zero which are $\mu$-nullsets for every $\mu \in \M_0(\Omega)$.

We
denote by $L^2_\mu(\Omega)$
the usual Lebesgue space w.r.t.\ a measure $\mu \in \M_0(\Omega)$.

For $\mu \in \M_0(\Omega)$
and $f \in H^{-1}(\Omega)$,
one can check {simply by the Lax--Milgram lemma (see also \cite[\S 3]{RW})} that there is a unique solution $y \in H_0^1(\Omega) \cap L^2_\mu(\Omega)$
of
\begin{equation}
	\label{eq:defn_Lmu}
	\dual{ -\Delta y }{v} + \int_\Omega y v \d\mu
	=
	\dual{f}{v}
	\qquad\forall v \in H_0^1(\Omega) \cap L^2_\mu(\Omega)
	.
\end{equation}
The solution map of this weak formulation is denoted by
$L_\mu\colon H^{-1}(\Omega) \to H^1_0(\Omega)$,
i.e., $L_\mu f = y$.

Associated to the VI \eqref{eq:VI},
define the \emph{inactive set} (or non-coincidence set) $I:= \{u < \psi\}$
and the \emph{active set} (or coincidence set) $A:=\{u = \psi\}$.
We denote by $A_s \subset A$ the \emph{strictly active set}
which can be defined via the quasi-support (or fine support) of the measure
$\xi = f + \Delta u$; this is a quasi-closed subset.
Note that all these sets are defined up to subsets of capacity zero.

Under \cref{ass:for_Bsw} and if $S(f) \in C_0(\Omega)$,
the strong-weak generalised derivative
$\partial_B^{sw} S(f) \subset \mathcal{L}(H^{-1}(\Omega), H^1_0(\Omega))$
of $S$ at $f$
was characterised in \cite[Theorem 5.6]{RW}
and we have
\begin{equation}
	\label{eq:RW_characterisation}
	\partial_B^{sw} S(f) = \{ L_\mu : \mu \in \M_0(\Omega),\; \mu(I) = 0 ,\; \mu= \infty \text{ on } A_s\},
\end{equation}
where $\mu=\infty$ on $A_s$ is to be understood in the sense that
\[v=0 \quad\text{q.e. on $A_s$ for all $v \in H^1_0(\Omega) \cap L^2_\mu(\Omega)$}\]
(see \cite[Lemma 5.2]{RW} for some equivalent characterisations).
We will show that the limits of the derivatives from \cref{prop:existenceQGeneral} belong to this set.

In the linear setting of \cref{lem:linearity_etc_of_murho},
$\mu_\rho$ is a measure and the  equation \eqref{eq:pdeForDerivative} for $\alpha_\rho$
is to be understood in the weak form
\[
\langle -\Delta\alpha_\rho, z \rangle + \int_\Omega \alpha_\rho z \d\mu_\rho = \langle d, z \rangle \qquad \forall z \in H^1_0(\Omega)
.
\]
Note that \eqref{eq:defn_mu_rho_intro} implies
\[\int_\Omega \alpha_\rho z \d\mu_\rho =  \int_\Omega  \Lambda_\rho'(u_\rho-\psi)\alpha_\rho z\d x.\]
Since $\Lambda_\rho'(u_\rho - \psi)$
belongs to $L^\infty(\Omega)$,
see \cref{lem:linearity_etc_of_murho},
we have
$H_0^1(\Omega) \subset L^2_{\mu_\rho}(\Omega)$.
Consequently,
the above equation is equivalent to
$L_{\mu_\rho}(d)=\alpha_\rho$ and this implies
\begin{equation}
L_{\mu_\rho} \equiv S_\rho'(f_\rho).\label{eq:Lmurho_is_Srho_prime}
\end{equation}
To summarise,
if the directional derivative $S_\rho'(f_\rho)$
is linear,
we can find a capacitary measure $\mu_\rho$
such that
\eqref{eq:Lmurho_is_Srho_prime} holds.

We now define a notion of convergence for measures related to convergence in the weak operator topology (see \cref{defn:WOT}) of the associated solution operators $L_\mu$ defined in \eqref{eq:defn_Lmu}.
\begin{defn}
	Let $\{\mu_n\}$ and $\mu$ be capacitary measures.
We say that $\mu_n \gammato \mu$ if and only if
\(L_{\mu_n} \WOT L_\mu.
\)
\end{defn}

The sequential compactness of the space $\M_0(\Omega)$,
see, e.g., \cite[Theorem 4.14]{DalMasoMosco}, is a crucial property which implies the next result.
\begin{prop}
	\label{lem:existenceOfmuStar}
        {Let \cref{ass:for_Bsw,ass:1} hold.}
	Let a sequence $\{f_\rho\}$ be given
	such that $S_\rho'(f_\rho)$ is linear for all $\rho$.
	We denote by $\mu_\rho$ the associated capacitary measure,
	see \eqref{eq:defn_mu_rho_intro}.
	Then,
	there exists a capacitary measure $\mu \in \M_0(\Omega)$ such that $\mu_\rho \gammato \mu$ (for a subsequence that we have relabelled).
	Equivalently,
	the derivatives satisfy
	$S_\rho'(f_\rho) \WOT L_{\mu}$.
\end{prop}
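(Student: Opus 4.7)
The plan is essentially to invoke a known compactness property of the space $\M_0(\Omega)$ and then translate it into the language of the weak operator topology via the identification in \eqref{eq:Lmurho_is_Srho_prime}. There is very little to do beyond citing the right result.

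First, I would observe that the two claimed conclusions are in fact the same. By \cref{lem:linearity_etc_of_murho}, the linearity assumption on $S_\rho'(f_\rho)$ guarantees that $\Lambda_\rho'(S_\rho(f_\rho)-\psi) \in L^\infty(\Omega)$, so the measure $\mu_\rho$ from \eqref{eq:defn_mu_rho_intro} is well defined and belongs to $\M_0(\Omega)$. By \eqref{eq:Lmurho_is_Srho_prime} we then have $S_\rho'(f_\rho) = L_{\mu_\rho}$. Thus, by the very definition of $\gammato$, the statements $\mu_\rho \gammato \mu$ and $S_\rho'(f_\rho) \WOT L_\mu$ are equivalent.

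Second, for existence of the limit, I would invoke the sequential compactness of $\M_0(\Omega)$ as cited (Dal Maso–Mosco, Theorem 4.14), which yields a subsequence (relabelled) and a capacitary measure $\mu \in \M_0(\Omega)$ such that $\mu_\rho \gammato \mu$. Combining with the identification above delivers the claim.

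The only potentially subtle point is ensuring that the mode of convergence produced by the compactness result matches the WOT-based definition of $\gammato$ used here. The standard $\gamma$-convergence of capacitary measures is defined via convergence of the resolvents $L_{\mu_n}(f) \to L_\mu(f)$ strongly in $H_0^1(\Omega)$ for every $f \in H^{-1}(\Omega)$ (indeed, for a fixed $f$, strong $H_0^1$ convergence of the solutions is standard); this is stronger than (hence implies) weak convergence of $L_{\mu_n}(f) \weaklyto L_\mu(f)$ for each $f$, which is exactly the condition in \cref{defn:WOT}. So the compactness in the classical $\gamma$-topology a fortiori gives compactness in the weaker sense of $\gammato$ used in this paper, and no extra argument is needed. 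Hence the proof reduces to citing \eqref{eq:Lmurho_is_Srho_prime} together with the Dal Maso–Mosco compactness.
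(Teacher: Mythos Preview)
Your proposal is correct and matches the paper's approach exactly: the paper does not give a formal proof but simply states the proposition as a direct consequence of the sequential compactness of $\M_0(\Omega)$ from \cite[Theorem~4.14]{DalMasoMosco}, together with the identification \eqref{eq:Lmurho_is_Srho_prime}. Your additional remark reconciling the classical (strong) $\gamma$-convergence with the WOT-based definition used here is a helpful clarification that the paper leaves implicit.
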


The key observation that allows us to prove the next theorem, which is the main result, is the following.
\cref{lem:existenceOfmuStar} implies, under the stated assumptions,
that there is a subsequence (which we shall relabel)
such that for every $d \in H^{-1}(\Omega)$,
$\alpha_\rho := S_\rho'(f_\rho) d \weaklyto L_{\mu}d =: \alpha$.
We emphasise that the subsequence is independent of $d$.
Thus, all of the results of \cref{sec:fixed_d} can be applied to obtain information on $\alpha$
and, consequently,
on the limiting operator $L_{\mu}$.

We come now to our main result, which concatenates the above results and characterises the limiting elements obtained in \cref{prop:existenceQGeneral}. We will use the fact that the result of \cref{lem:orthogonality} is equivalent \cite[Lemma~3.7]{HarderWachsmuth} to 
\begin{align}
\alpha &= 0 \quad \text{q.e. on $A_s$}.\label{eq:harderwachsmuth_characterisation}
\end{align}
\begin{theorem}\label{thm:main_result}
{Let \cref{ass:for_Bsw,ass:1} hold.}
Let $f_\rho \to f$ in $H^{-1}(\Omega)$ with $S(f) \in C_0(\Omega)$ be given.
We further assume that $S_\rho$ is Gâteaux differentiable at $f_\rho$ for all $\rho > 0$.
Then there exists a map $\Qf \in  \partial_B^{sw}S(f)$ such that, for a subsequence (that we relabel),
\begin{equation*}
S_\rho'(f_\rho) \WOT \Qf.
\end{equation*}
\end{theorem}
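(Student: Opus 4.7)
The plan is to realise each $S_\rho'(f_\rho)$ as a capacitary resolvent $L_{\mu_\rho}$, extract a $\gamma$-limit $\mu$, and then verify the two conditions appearing in the characterisation \eqref{eq:RW_characterisation}. Concretely, the assumed Gâteaux differentiability of $S_\rho$ at $f_\rho$ places us in the setting of \cref{lem:linearity_etc_of_murho}, which identifies $\Lambda_\rho'(S_\rho(f_\rho) - \psi)$ with an $L^\infty(\Omega)$-function, so that formula \eqref{eq:defn_mu_rho_intro} produces a capacitary measure $\mu_\rho \in \M_0(\Omega)$ with $S_\rho'(f_\rho) = L_{\mu_\rho}$. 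Applying \cref{lem:existenceOfmuStar} then yields a relabelled subsequence along which $\mu_\rho \gammato \mu$ for some $\mu \in \M_0(\Omega)$, equivalently $S_\rho'(f_\rho) \WOT L := L_\mu$. It remains to verify the two membership conditions $\mu = \infty$ on $A_s$ and $\mu(I) = 0$ demanded by \eqref{eq:RW_characterisation}.

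For the infinity condition on $A_s$, I would fix an arbitrary $d \in H^{-1}(\Omega)$ and use the key fact that the subsequence obtained from \cref{lem:existenceOfmuStar} is independent of $d$, so that $\alpha_\rho := S_\rho'(f_\rho) d \weaklyto \alpha := Ld$ in $H^1_0(\Omega)$. \cref{lem:orthogonality} applied to this weak limit, together with the equivalent reformulation \eqref{eq:harderwachsmuth_characterisation}, gives $\alpha = 0$ q.e.\ on $A_s$. Since $d \in H^{-1}(\Omega)$ was arbitrary, every element of the range of $L_\mu$ vanishes q.e.\ on $A_s$, which by \cite[Lemma~5.2]{RW} is precisely the statement $\mu = \infty$ on $A_s$.

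The condition $\mu(I) = 0$ is where I expect the main obstacle. Starting from arbitrary $d$ and $\alpha = Ld$, \cref{prop:multiplerConditionsNew_purple} gives $\langle -\Delta\alpha - d, v\rangle = 0$ for every $v \in H^1_0(\Omega)$ vanishing q.e.\ on $A$; combining with the defining identity \eqref{eq:defn_Lmu} for $L_\mu d$ yields
\begin{equation*}
	\int_\Omega \alpha v \d\mu = 0
\end{equation*}
for all such $v$ that additionally lie in $L^2_\mu(\Omega)$. To turn this bilinear vanishing into $\mu(I) = 0$, I would specialise $d$ to be nonnegative and nontrivial so that the weak maximum principle applied to $-\Delta\alpha + \alpha\mu = d$ forces $\alpha$ to be strictly positive quasi-everywhere in $I$, then test against capacitary potentials of a quasi-compact exhaustion of $I$. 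The subtle point is ensuring these test functions are admissible, i.e.\ belong to $L^2_\mu(\Omega)$, since a priori $\mu$ need not be locally finite inside $I$. I would handle this by exploiting the structural information on the $\mu_\rho$: by \eqref{eq:structure_lambda}, each $\mu_\rho$ is absolutely continuous with density vanishing outside $\{u_\rho - \psi \geq k_\rho\}$, and combining $k_\rho \to 0$ in $L^\infty(\Omega)$ with the capacitary convergence $u_\rho \to u$ (from \cref{prop:convergence_to_VI_soln} together with \cite[Lemma~2.2]{TowardsM}) confines the supports of the $\mu_\rho$ arbitrarily close (in capacity) to the coincidence set $A$, a property that should propagate to the $\gamma$-limit $\mu$ and close the argument.
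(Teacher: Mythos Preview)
Your setup and the verification of $\mu = \infty$ on $A_s$ are correct and essentially match the paper (the paper applies \cref{lem:orthogonality} only for the single datum $d = 1$, since \cite[Lemma~5.2]{RW} characterises the infinity condition via $w_\mu := L_\mu(1)$, but your ``for all $d$'' version works equally well).

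The gap is in your treatment of $\mu(I) = 0$. Your maximum-principle step is circular: if $\mu$ were $+\infty$ on some quasi-open $B \subset I$, then every $\alpha = L_\mu d$ would vanish q.e.\ on $B$ regardless of the sign of $d$, so you cannot get $\alpha > 0$ q.e.\ on $I$ without already knowing $\mu$ is locally finite there. Your fallback --- propagating the support information $\operatorname{supp}\mu_\rho \subset \{u_\rho - \psi \ge k_\rho\}$ through the $\gamma$-limit --- is plausible but not a proof as stated; $\gamma$-convergence does not in general respect supports, and making this precise would require additional machinery.

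The paper sidesteps all of this with a specific choice of $d$. Pick $v \in H_0^1(\Omega)$ nonnegative with $\{v > 0\} = I$ (so $v = 0$ q.e.\ on $A$), set $d := -\Delta v$ and $\alpha := L_\mu d$. Testing the equation for $\alpha$ with $\alpha$ gives $\norm{\nabla\alpha}{L^2} \le \norm{\nabla v}{L^2}$; combining this with \cref{prop:multiplerConditionsNew_purple} (which yields $\langle -\Delta\alpha - d, v\rangle = 0$) forces $\norm{\nabla(\alpha - v)}{L^2}^2 \le 0$, hence $\alpha = v$. Now testing the equation with $v$ gives $\int_\Omega v^2 \d\mu = 0$, and since $v > 0$ q.e.\ on $I$ this is exactly $\mu(I) = 0$. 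The point is that this particular $d$ makes $\alpha$ equal to a function you control, so neither maximum principles nor $L^2_\mu$-admissibility of auxiliary test functions are needed.
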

\begin{proof}
Recalling the characterisation
\[\partial_B^{sw} S(f) = \{ L_\mu : \mu \in \M_0(\Omega),\; \mu(I) = 0 ,\; \mu= \infty \text{ on } A_s\},\]
we will show that the operator $L_{\mu}$ from \cref{lem:existenceOfmuStar} belongs to the set on the right-hand side.

We choose a nonnegative $v \in H_0^1(\Omega)$ such that $\set{v > 0} = I$.
Further, set $d := -\Delta v$ and $\alpha := L_{\mu} d$, i.e.,
\begin{equation}
-\Delta \alpha + \mu \alpha = -\Delta v\label{eq:alphamuandv}    
\end{equation}
(recall that the notation $L_{\mu}$ was defined just below \eqref{eq:defn_Lmu}).
Testing with $\alpha$ we get $\norm{\nabla\alpha}{L^2} \le \norm{\nabla v}{L^2}$. Consequently,
    \[\langle -\Delta (\alpha - v), \alpha - v \rangle
    \le 2 \langle -\Delta v, v \rangle + 2 \langle \Delta \alpha, v \rangle = 2 \langle d, v \rangle + 2 \langle \Delta \alpha, v \rangle = 0\]
with the final equality by \cref{prop:multiplerConditionsNew_purple}. Thus, $\alpha = v$. Testing the equation {\eqref{eq:alphamuandv} by $v$}, we get
\[    \int_\Omega v^2 \d\mu = 0\]
and this means $\mu(I) = 0$.

Now, we choose $d := 1$ and $\alpha := L_{\mu} d$.
Note that this $\alpha$ coincides with $w_\mu:= L_{\mu}(1)$ defined below Theorem 3.8 of \cite{RW}.
From the characterisation \eqref{eq:harderwachsmuth_characterisation} of the result of \cref{lem:orthogonality}, we get $\alpha = w_\mu = 0$ q.e. on $A_s$ and by \cite[Lemma 5.2]{RW} we get $\mu = +\infty$ on $A_s$.
\end{proof}
In the nonsmooth case, \cref{thm:main_result} and \cref{lem:density_result_general} imply that there exists a dense subset $F \subset H^{-1}(\Omega)$ such that if $f \in F$ and $S(f) \in C_0(\Omega)$, there exists a map $\Qf \in  \partial_B^{sw}S(f)$ such that, for a subsequence (that we relabel),
\begin{align*}
S_n'(f) &\WOT \Qf.
\end{align*}

\section{Optimal control of the obstacle problem}
We apply our findings to the optimal control of the obstacle problem.
Let $F_{ad} \subset L^2(\Omega)$ be a nonempty, closed and convex set satisfying
\[S(f) \in C_0(\Omega) \qquad \forall f \in F_{ad}\]
(see \cref{rem:cts_solution}) and let $J\colon H^1_0(\Omega) \times L^2(\Omega) \to \mathbb{R}$ be a given objective function satisfying
\begin{enumerate}[label=(\roman*)]\itemsep=0em
\item $J$ is continuously Fr\'echet differentiable with partial derivatives $J_y$ and $J_f$,
\item If $y_n \to y$ in $H^1_0(\Omega)$ and $f_n \weaklyto f$ in $L^2(\Omega)$, then
\[J(y,f) \leq \liminf_{n \to \infty} J(y_n,f_n).\]
\end{enumerate}
An example of $J$ satisfying the above conditions is
\[J(y,f) := \frac12 \norm{y-y_d}{L^2(\Omega)}^2 + \frac{\nu}{2}\norm{f}{L^2(\Omega)}^2,\]
where $y_d \in L^2(\Omega)$ is a given desired state and $\nu \geq 0$ is a constant. 

Consider the optimal control problem
\begin{equation}
\min_{f \in F_{ad}} J(y,f) \quad\text{such that}\quad y=S(f).\label{eq:OC_problem}
\end{equation}
In the next result, we will derive a first-order optimality condition for this control problem.
This rigorously shows the satisfaction of \cite[(19)]{RW}, which was derived only formally there.
\begin{theorem}\label{thm:oc_result}
{Let \cref{ass:for_Bsw} hold.}
For any local minimiser $(\bar y,\bar f) \in H^1_0(\Omega) \times F_{ad}$ of \eqref{eq:OC_problem},
there exists $\Qf \in \partial_B^{sw} S(\bar f)$
such that
\[0 \in \Qf^* J_y(\bar y,\bar f) + J_f(\bar y,\bar f) + N_{F_{ad}}(\bar f)\]
is satisfied where $N_{F_{ad}}(\bar f)$ is the normal cone to $F_{ad}$ at $\bar f$.
\end{theorem}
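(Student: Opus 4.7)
The plan is a Barbu-type penalisation and regularisation argument: replace the non-smooth $S$ by the smooth penalised surrogate $S_\rho^{\mathrm{sm}}$ in the objective (so that Gâteaux differentiability holds everywhere on $H^{-1}(\Omega)$), add a quadratic Tikhonov term to localise around $\bar f$, derive KKT conditions for the regularised problem, and pass to the limit using \cref{thm:main_result}. Let $r > 0$ be such that $(\bar y, \bar f)$ minimises $J(S(\cdot), \cdot)$ over $F_{ad} \cap \bar B_r(\bar f)$, with $\bar B_r(\bar f)$ denoting the closed $L^2$-ball, and consider
\begin{equation*}
	\min_{f \in F_{ad} \cap \bar B_r(\bar f)} J_\rho(f) := J(S_\rho^{\mathrm{sm}}(f), f) + \tfrac{1}{2}\norm{f - \bar f}{L^2(\Omega)}^2.
\end{equation*}
The compact embedding $L^2(\Omega) \ctsCompact H^{-1}(\Omega)$ combined with the Lipschitz continuity of $S_\rho^{\mathrm{sm}}\colon H^{-1}(\Omega) \to H^1_0(\Omega)$ from \cref{lem:existence_lipschitz_urho} shows that $f \mapsto S_\rho^{\mathrm{sm}}(f)$ is weak-to-strong continuous from $L^2(\Omega)$ into $H^1_0(\Omega)$; together with the lower semicontinuity assumption on $J$ and weak compactness of the admissible set, the direct method produces a minimiser $f_\rho$.

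Next, I would show $f_\rho \to \bar f$ strongly in $L^2(\Omega)$. Comparing $J_\rho(f_\rho) \le J_\rho(\bar f) \to J(\bar y, \bar f)$ (the convergence uses \cref{prop:convergence_to_VI_soln} and continuity of $J$) and extracting a weakly convergent subsequence $f_\rho \weaklyto f^*$, the weak-to-strong continuity combined with the lower semicontinuity of $J$ yields $J(S(f^*), f^*) + \tfrac{1}{2}\norm{f^* - \bar f}{L^2(\Omega)}^2 \le J(\bar y, \bar f)$; the local optimality of $\bar f$ forces $f^* = \bar f$ and $\norm{f_\rho - \bar f}{L^2(\Omega)} \to 0$, so the ball constraint is eventually inactive. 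Since $S_\rho^{\mathrm{sm}}$ is Gâteaux differentiable and $J$ is $C^1$, the first-order necessary condition for $f_\rho$ reads
\begin{equation*}
	0 \in (S_\rho^{\mathrm{sm}})'(f_\rho)^* J_y(u_\rho, f_\rho) + J_f(u_\rho, f_\rho) + (f_\rho - \bar f) + N_{F_{ad}}(f_\rho),
\end{equation*}
with $u_\rho := S_\rho^{\mathrm{sm}}(f_\rho)$, where the adjoint is understood via the embedding $H^1_0(\Omega) \cts L^2(\Omega)$ so that the whole sum lives in $L^2(\Omega)$.

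Finally, \cref{thm:main_result} (applicable because $S(\bar f) \in C_0(\Omega)$ by the assumption on $F_{ad}$) gives a subsequence with $(S_\rho^{\mathrm{sm}})'(f_\rho) \WOT \Qf$ for some $\Qf \in \partial_B^{sw} S(\bar f)$, and in the Hilbert setting WOT convergence transfers to the adjoints, giving $(S_\rho^{\mathrm{sm}})'(f_\rho)^* \WOT \Qf^*$. Combined with the uniform operator-norm bound from \cref{lem:boundOnDerivative} and strong convergence $J_y(u_\rho, f_\rho) \to J_y(\bar y, \bar f)$ in $H^{-1}(\Omega)$, a standard splitting argument yields $(S_\rho^{\mathrm{sm}})'(f_\rho)^* J_y(u_\rho, f_\rho) \weaklyto \Qf^* J_y(\bar y, \bar f)$ weakly in $H^1_0(\Omega)$ and hence weakly in $L^2(\Omega)$. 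The remaining terms converge strongly in $L^2$, so the implicit multiplier $n_\rho \in N_{F_{ad}}(f_\rho)$ defined by the KKT identity converges weakly in $L^2(\Omega)$ to $n := -\Qf^* J_y(\bar y, \bar f) - J_f(\bar y, \bar f)$; passing to the limit in $(n_\rho, g - f_\rho)_{L^2(\Omega)} \le 0$ for arbitrary $g \in F_{ad}$ (using strong convergence $f_\rho \to \bar f$) gives $n \in N_{F_{ad}}(\bar f)$, which is exactly the claimed inclusion. The main delicate step is precisely this simultaneous passage to the limit in the adjoint-state term under only WOT convergence together with closure of the normal cone relation; it is essential here that \cref{thm:main_result} provides a single subsequence that works independently of the particular right-hand side $J_y$.
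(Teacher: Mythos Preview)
Your proposal is correct and follows essentially the same route as the paper: penalise with $S_\rho^{\mathrm{sm}}$, add the Tikhonov term $\tfrac12\norm{f-\bar f}{L^2(\Omega)}^2$ and localise with a closed ball, obtain a minimiser $f_\rho$, show $f_\rho\to\bar f$, write the KKT condition, and pass to the limit via \cref{thm:main_result}. The only cosmetic differences are that the paper invokes self-adjointness of $S_\rho'(f_\rho)$ (rather than your general Hilbert-space argument) for the adjoint convergence, cites a product rule from \cite{RW} for the splitting step, and upgrades the weak $H_0^1$-convergence of the adjoint term to strong $L^2$-convergence via the compact embedding before closing with the graph of $N_{F_{ad}}$; your slightly weaker weak-$L^2$ version combined with $f_\rho\to\bar f$ strongly is equally sufficient.
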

\begin{proof}
	We denote by $\varepsilon > 0$ the radius of optimality,
	i.e.,
	\begin{equation*}
		J(S(f),f) \ge J(\bar y, \bar f)
		\qquad
		\forall f \in F_{ad} \cap B_{\varepsilon}(\bar f),
	\end{equation*}
	where $B_\varepsilon(\bar f)$ is a closed ball in $L^2(\Omega)$.

	We regularise the problem \eqref{eq:OC_problem} and consider 
	\[
		\min_{f \in F_{ad} \cap B_\varepsilon(\bar f)} J(S_\rho(f),f) + \frac12 \norm{f - \bar f}{L^2(\Omega)}^2
		,
	\]
	where we take $S_\rho$ to be $S_\rho^{\mathrm{sm}}$ or $S_\rho^{\mathrm{sc}}$.
	Denote by $f_\rho$ a global minimiser of this problem and set $y_\rho := S_\rho(f_\rho)$.

	Using standard arguments,
	one can show $f_\rho \to \bar f$ in $L^2(\Omega)$.
	Consequently, the constraint $f_\rho \in B_\varepsilon(\bar f)$
	is not binding for small enough $\rho$.
	By the standard minimisation principle, we get 
	\[
		0 \in \bar J'(f_\rho) + (f_\rho - \bar f) + N_{F_{ad}}(f_\rho)
	\]
	where $\bar J(f) := J(S_\rho(f), f)$ defines the reduced functional. 
	Using the chain rule and the fact that $J$ is Fr\'echet, we can write this as
	\[
		0
		\in
		S_\rho'(f_\rho)^* J_y(y_\rho, f_\rho) + J_f(y_\rho, f_\rho) + (f_\rho - \bar f) + N_{F_{ad}}(f_\rho)
		\quad\text{in } L^2(\Omega).
	\]
	Note that $J_y(y_\rho, f_\rho) \in H^{-1}(\Omega)$
	and $S_\rho'(f_\rho)^* \colon H^{-1}(\Omega) \to H_0^1(\Omega) \subset L^2(\Omega)$.

	It remains to pass to the limit with this optimality condition.
	From \cref{prop:convergence_to_VI_soln}
	we get $y_\rho \to \bar y$ in $H_0^1(\Omega)$.
	Further,
	\cref{thm:main_result} enables us to select a subsequence
	(which we relabel)
	such that
	$S_\rho'(f_\rho) \WOT \Qf$ for some $\Qf \in  \partial_B^{sw}S(\bar f)$.
	Further, we note that $S_\rho'(f_\rho)$ and $\Qf$ are self-adjoint,
	which yields
	$S_\rho'(f_\rho)^* \WOT \Qf^*$
	Together with the product rule
	\cite[Lemma 2.9 (ii)]{RW}
	we get
	\[
		S_\rho'(f_\rho)^* J_y(y_\rho, f_\rho) + J_f(y_\rho, f_\rho) + (f_\rho - \bar f)
		\weaklyto
		\Qf^* J_y(\bar y, \bar f) + J_f(\bar y, \bar f)
	\]
	in $H_0^1(\Omega)$
	and, consequently,
	strongly in $L^2(\Omega)$.
	Since the graph of the normal cone map is closed,
	this implies
	\[
		0 \in \Qf^* J_y(\bar y, \bar f) + J_f(\bar y, \bar f) + N_{F_{ad}}(\bar f)
	\]
	as claimed.
\end{proof}
Thanks to this result, \cite[Lemma 7.2]{RW} implies the existence of 
\[p \in H^1_0(\Omega \setminus A_s),\qquad \nu \in H^{-1}(\Omega),\qquad \lambda \in N_{F_{ad}}(\bar f),\]
such that 
\begin{align*}
p + J_f(\bar y,\bar f) + \lambda &= 0, &
\langle \nu, v \rangle &= 0 \quad \forall v \in H^1_0(\Omega \setminus A),\\
J_y(\bar y,\bar f) + \Delta p - \nu &=0, &
\langle \nu, p\varphi \rangle &\geq 0 \quad \forall \varphi \in W^{1,\infty}(\Omega)^+,
\end{align*}
which is a necessary condition satisfied by every local minimiser as shown in \cite{SchielaWachsmuth}.
Here, $A_s$ and $A$ denote the strictly active set and the active set
associated with $(\bar y, \bar f)$, respectively. {In summary, we have derived first-order conditions with a different method of proof to what is available in the literature, by simply applying the limiting subdifferential theory we have developed without needing to study for example properties of adjoints.}

\printbibliography

\end{document}